\numberwithin{equation}{section}
\numberwithin{figure}{section}
\newtheorem{theorem}{Theorem}[section]
\newtheorem{corollary}[theorem]{Corollary}
\newtheorem{definition}[theorem]{Definition}
\newtheorem{example}[theorem]{Example}
\newtheorem{lemma}{Lemma}[section]
\newtheorem{fact}{Fact}[section]
\newtheorem{proposition}[theorem]{Proposition}
\newtheorem{remark}[theorem]{Remark}
\newcommand{\hilbertX}{\mathcal{ X}}
\newcommand{\rank}{\text{rank}\,}
\newcommand{\spanc}{\overline{\operatorname{span}}}
\newcommand{\setM}{\mathcal{ M}}
\newcommand{\coneN}{\mathcal{ N}}
\begin{document}

\title[On Abadie Condition under RCRCQ+]{Abadie condition for infinite programming problems under Relaxed Constant Rank Constraint Qualification Plus
}

\keywords{tangent cone, Relaxed Constant Rank Constraint Qualification, Abadie condition, Lagrange Multipliers,  Rank Theorem, Ljusternik Theorem, Schauder basis, Besselian and Hilbertian basis}
\author{
	Ewa M. Bednarczuk$^{1,2}$
}
\author{
	Krzysztof W. Le\'sniewski$^2$
}
\author{
Krzysztof E. Rutkowski$^{2,3}$ 
}
\thanks{$^1$ Warsaw University of Technology, 00-662 Warszawa, Koszykowa 75,		
}
\thanks{$^2$ Systems Research Institute of the Polish Academy of Sciences, 01-447, Warszawa, Newelska 6,
}
\thanks{$^3$ Cardinal Stefan Wyszy\'nski University, 
	Faculty of Mathematics and Natural Sciences. School of Exact Sciences, 01-815, Warszawa,  Dewajtis 5}

\begin{abstract}

We consider infinite programming problems with constraint sets defined by systems of infinite number of inequalities and equations given by continuously differentiable functions defined on Banach spaces.
In the approach proposed here we represent these systems with the help of coefficients in a given Schauder basis. We prove the Abadie condition  under the new infinite-dimensional Relaxed Constant Rank Constraint Qualification Plus and we discuss the existence of Lagrange multipliers. The main tools are: Rank Theorem and Ljusternik Theorem. 

\end{abstract}
\maketitle

\section{Introduction}

 Let $E$ be a Banach space and $f_0,\ g_i:\ E \rightarrow \mathbb{R}$, $i\in I_0\cup I_1$, $I_0\cap I_1=\emptyset$ are functions of class $C^1$.
 %such that $\sum_{i\in I_0\cup I_1} (g_i(x))^2 <\infty$ for all $ x \in U \subset E$, $U$ - open. 
 We consider the following \textit{infinite programming} problem
    \begin{align}\tag{$P_0$}\label{problem:P0} 
    \begin{aligned}
         &\operatorname{Minimize}_{x\in\mathcal{ F } }f_0(x)\\
        \mathcal{ F}=&\left\{ x \in E \ \big|\  \begin{array}{ll}
             g_i(x) = 0,&  i\in I_0,\\
             g_i(x) \leq 0,& i\in I_1
        \end{array}\right\},
     \end{aligned}
     \end{align}
i.e., both index sets $I_0,I_1$ can be infinite. 
%{\color{red} Dyskusja o istniejacych warunkach i wspomnieć o tym, że jesteśmy w Hilbercie}

This problem can be cast into a general framework. 
Let $F$ be a Banach space.  Let $(b_{i})_{i\in \mathbb{N}}$ be a %Schauder 
basis of $F$ i.e. for all $x\in F$ there is a unique sequence of scalars $(x_n)$, $n\in \mathbb{N}$ s.t. $x=\sum\limits_{n=1}^\infty x_n b_n$. %and let $(b_i^*)_{i\in \mathbb{N}}\subset F$ be a sequence of associated linear functionals (see \cite[Definition 1.1.2 and Theorem 1.1.3]{MR2192298} and \cite[Definition 3.1]{MR0298399}), i.e.  $(b_i,b_i^*)$ is a biorthogonal system of $F$:

The following proposition establishes the existence of a biorthogonal system, i.e. existence of a sequence of associated linear functionals $(b_i^*)\in F^*$, $i\in \mathbb{N}$ s.t.
\begin{equation*}
	%\label{asd}
	b_i^*( v_j) =\delta_{ij}= \left\{ \begin{array}{ll}
	1 & \text{if } i=j,\\
	0 & \text{if } i\neq j
	\end{array}\right.
	\end{equation*}
 \begin{proposition}
\label{fact:existence_dial_basis}(\cite[Theorem 1.1.3]{MR2192298})
	Let $X$ be a Banach space and let $(v_n)_{n\in \mathbb{N}}$ be a basis of $X$. 
	Then there is a sequence $(v_n^*)_{n\in \mathbb{N}} \in X^*$ such that 
%	
%	
%	
%	form a basis in 
%	
%	Functionals $\{e_1^*,\dots,e_\kappa^*\}\in \hilbertH^*$ are called biorthogonal to $e_i$, $i=1,\dots,\kappa$ if
	\begin{equation}\label{delta_1}
	%\label{asd}
	v_i^*( v_j) =\delta_{ij}= \left\{ \begin{array}{ll}
	1 & \text{if } i=j,\\
	0 & \text{if } i\neq j
	\end{array}\right.
	\end{equation}
	and $x=\sum_{n\in \mathbb{N}} v_n^*(x)v_n$ for each $x \in X$. Such pair $(v_n,v_n^*)$ is called biorthogonal system.
 \end{proposition}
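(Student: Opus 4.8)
The plan is to construct the candidate functionals by hand and then upgrade them to \emph{bounded} functionals via the open mapping theorem applied to a well-chosen auxiliary norm. For $x\in X$ with unique expansion $x=\sum_{k}x_{k}v_{k}$, set $v_{n}^{*}(x):=x_{n}$. Linearity of $v_{n}^{*}$ is immediate from uniqueness of the expansion, the reconstruction formula $x=\sum_{n}v_{n}^{*}(x)v_{n}$ is the defining property of a basis rewritten, and applying it to $x=v_{j}$ (whose expansion is $\sum_{k}\delta_{jk}v_{k}$) gives the biorthogonality relation \eqref{delta_1}. The sole nontrivial point is continuity of each $v_{n}^{*}$.

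To obtain it, I would introduce the partial-sum operators $P_{n}\colon X\to X$, $P_{n}x=\sum_{k=1}^{n}x_{k}v_{k}$, and the auxiliary norm $|||x|||:=\sup_{n\in\mathbb{N}}\|P_{n}x\|$. This supremum is finite since the partial sums of a convergent series are bounded, $|||\cdot|||$ is a norm, and $\|x\|\le|||x|||$ for every $x$. The heart of the argument is to show that $(X,|||\cdot|||)$ is complete. Given a $|||\cdot|||$-Cauchy sequence $(x^{(m)})$, for each fixed $n$ the sequence $(P_{n}x^{(m)})_{m}$ is $\|\cdot\|$-Cauchy, hence $\|\cdot\|$-convergent to some $y_{n}\in X$; since $P_{n}x^{(m)}-P_{n-1}x^{(m)}=x_{n}^{(m)}v_{n}$ lies in the one-dimensional (closed) span of $v_{n}$, passing to the limit yields $y_{n}-y_{n-1}=a_{n}v_{n}$ for scalars $a_{n}$, so $y_{n}=\sum_{k=1}^{n}a_{k}v_{k}$. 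A three-epsilon estimate — using Cauchyness of $(x^{(m)})$ in $|||\cdot|||$, the convergence $P_{n}x^{(m)}\to y_{n}$, and the convergence of the series $\sum_{k}x_{k}^{(m)}v_{k}$ for a fixed large $m$ — shows that $(y_{n})$ is $\|\cdot\|$-Cauchy; its limit $x=\sum_{k}a_{k}v_{k}\in X$ has partial sums $P_{n}x=y_{n}$, from which $|||x^{(m)}-x|||\to0$ follows.

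With completeness established, the identity map $(X,|||\cdot|||)\to(X,\|\cdot\|)$ is a continuous linear bijection between Banach spaces, so by the bounded inverse theorem there is $C>0$ with $|||x|||\le C\|x\|$ for all $x$. Hence $\sup_{n}\|P_{n}\|\le C$, and from $v_{n}^{*}(x)v_{n}=P_{n}x-P_{n-1}x$ we get $|v_{n}^{*}(x)|\,\|v_{n}\|\le 2C\|x\|$, that is, $v_{n}^{*}\in X^{*}$ with $\|v_{n}^{*}\|\le 2C/\|v_{n}\|$. The main obstacle I anticipate is precisely the completeness of $(X,|||\cdot|||)$: one has to organize the limit passage carefully, balancing the three distinct sources of smallness in the correct order, whereas every step afterwards is a routine invocation of standard functional analysis.
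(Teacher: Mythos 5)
Your proof is correct and is precisely the classical argument behind the cited result (\cite[Theorem 1.1.3]{MR2192298}): the paper itself gives no proof, merely quoting it, and the standard proof in that reference proceeds exactly as you do, via the auxiliary norm $|||x|||=\sup_n\|P_nx\|$, completeness of $(X,|||\cdot|||)$, and the bounded inverse theorem to get uniform boundedness of the partial-sum projections. No gaps; the completeness step you flag as the crux is handled correctly by your three-epsilon estimate.
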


When $F$ is finite-dimensional ($\operatorname{dim} F= n$) we assume that $(b_{i})_{i\in \mathbb{N}}$ is a basis of $F$ (see \cite[Example (i), Chapter 6]{MR1831176}).  Let us note that any separable Hilbert space has a basis. 
 
 Let $K\subset F$ be a cone defined as
    \begin{equation}\label{cone:K}
        K:%=K_{\{b_i\}}^{I_0}
        = \{ y \in F \mid y= \sum_{i\in I_0\cup I_1} b_i^*(y) b_i, \ b_i^*(y)= 0,\ i\in I_0,\ b_i^*(y)\leq 0, \ i\in I_1 \},
    \end{equation}
    where $I_0\cup I_1\subseteq  \mathbb{N} $, $I_0\cap I_1=\emptyset$.
    Cone $K$ is closed, convex  and has no interior points (see \cite{Fullerton_geometric_properties}).
     In the sequel we refer to cone $K$ as a basis cone (see \cite{MR147888}). 
    
    Let  $G:\ E\rightarrow F$  be    $C^1$ mappings, i.e. $G:\ E\rightarrow F$  are continuously Fr\'echet differentiable functions. Consider the minimization problem
\begin{equation}
\label{problem:P} 
    \tag{$P$}
    \operatorname{Minimize}_{x\in\mathcal{ F} }f_0(x)
    \end{equation}
    where $\mathcal{ F}:=\{x\in E\ \mid\ G(x)\in K\}$.
    
    By taking $F=\ell_2$,  $b_i=e_i$, $i\in \mathbb{N}$, where $(e_i)_{i\in \mathbb{N}}$, is the canonical basis of $\ell_2$,  
    the problem \eqref{problem:P} is of the form \eqref{problem:P0}.     Indeed, the set $ \mathcal{ F}$ takes the form
$$
    \mathcal{ F}:=\{x\in E\ \mid\ g_i(x):=e_i^*(G(x))=0,\ i\in I_0,\ \ g_i(x):=e_i^*(G(x))\leq 0,\ i\in I_1\}.
    $$
    
  %  In other words, the following problem can be cast into our framework:
    %we  also consider the following problem

    In the case when $F$ is finite-dimensional  ($F = \mathbb{R}^\kappa$) and $b_i=e_i$, $i\in \{1,\dots,\kappa\} $ is the canonical basis of the space $F$, cone $K$ has a form
    \begin{equation}\label{cone:K_finite}
        K= \{ y=(y_1,\dots,y_\kappa) \in F \mid y_i= 0,\ i\in I_0,\ y_i\leq 0, \ i\in I_1 \},
    \end{equation}
where $I_0\cup I_1 = \{1,\dots,\kappa \}$. 

    The sufficient conditions for problem \eqref{problem:P} with $F$ - finite dimensional and cone $K$ given by \eqref{cone:K_finite} are considered in  \cite{MR4223867}. Below we concentrate on the case when $F$ is infinite-dimensional.

    \begin{remark}
    In Banach spaces $X$ with a basis $(b_i)_{i\in \mathbb{N}}$, the cone defined as
        \begin{equation}\label{cone:kbi}
        K_{\{b_i\}}:= \{ y \in X \mid y= \sum_{i\in \mathbb{N}} b_i^*(y) b_i, \ b_i^*(y)\leq 0, \ i\in \mathbb{N} \}
    \end{equation}
    was considered  in \cite{Fullerton_geometric_properties,MR313766}. For $I_0=\emptyset$ and $I_1=\mathbb{N}$, the cone $K_{\{b_i\}}$ coincides with $K$.  In the space $c$ cone defined as \eqref{cone:kbi} 
    %basis cone 
    can never coincide with the natural nonnegative cone, since the natural nonnegative cone has a nonvoid interior while the basis cone $K_{\{b_i\}}$  
    %being determined as the closure of the convex set determined by a family of extreme rays, 
    always has a void interior 
    %for $X$ not of finite dimension 
    (see \cite{MR147888}).
    \end{remark}
    
    Let us note that even in Hilbert spaces not all standard nonnegative cones  can be represented in the form of \eqref{cone:K} as can be seen from the following example\footnote{Example provided by professor Sergei Konyagin, Academician of RAS, by courtesy of profesor Nikolai Osmolovskii.}.
    
    \begin{example}
    \label{example:konyagin}
    Let $F=L_2([0,1])$. Suppose that $f_k \geq 0$, $k=1,\dots$ is a basis of the nonnegative cone of $L_2([0,1])$. Observe that, due to the fact that $L_2^+([0,1])$ is generating, $f_k$, $k\in \mathbb{N}$, is also a basis of $F$. For each $k\in \mathbb{N}$ let $F_k$ be the support of $f_k$ and choose a subset $M_k \subset F_k$ of measure $<10^{-k}$, Then the set $M=\bigcup_{k\in \mathbb{N}} M_{k}$ has measure $<1/9$, hence the set $M^\prime=[0,1]\setminus M$ has measure greater than $0$. 
    
    Let $h(t)$ be the characteristic function of $M^\prime$, i.e. $h(t)=1$ for $t\in M^\prime$ and $h(t)=0$ for $t\notin M^\prime$. Obviously, $h(t)=0$ on $M$. On the other hand,  $h(t)=\sum_{k\in \mathbb{N}} a_k f_k(t)$ with $a_k\geq 0$, $k\in \mathbb{N}$. Take any $k\in \mathbb{N}$ such that $a_k>0$. Then $h(t)>0$ on $F_k$ and in particular on $M_k$. However, $M_k\subset M$, where $h(t)=0$, a contradiction, i.e., such a basis does not exists. 
    \end{example}

We have
\begin{equation*}
    G(x)=\sum_{i\in I_0\cup I_1} b_i^*(G(x)) b_i=\sum_{i\in I_0\cup I_1} g_i(x) b_i, 
    %[g_i(x)]_{i\in \mathbb{N}}
      \quad \text{where } g_i(x):= b_i^* (G(x)), \ i\in I_0\cup I_1.
\end{equation*}

By \eqref{cone:K},  we can rewrite set $\mathcal{ F}$ in an equivalent way as follows
\begin{equation}\label{set:constraints}
 \mathcal{ F}= \{ x \in E \mid g_i(x)=0,\ i \in I_0,\ g_i(x)\leq 0,\ i\in I_1 \}.
\end{equation}

    % where $H:E\rightarrow \ell_{2}(I_{1})$, $H(x):=(f_{i}(x))_{i\in I_{1}}$, {\color{red} G=..,  K=....}
    % $$
    % \ell_{2}^{-}(I_1):=\{z=(z_{i})_{i\in I_{1}}\in\ell_{2}(I_{1})\ \mid\ z_{i}\le 0,\ i\in I_{1}\},
    % $$
%     {\color{red} and, for any index set J, $\ell_{2}(J)\subset \ell_{2}$ is a closed subspace of $\ell_{2}$,
%     \begin{equation}
%         \label{ellJ}
%     \ell_{2}(J):=\overline{span}\{e_{j}\ |\ j\in J\}.
%   \end{equation}
%     }

% (Dyskusja o warunkach regularnosci dla zadania, finite, semi-finite, nasze \cite{MR4223867}, Andreani \cite{MR3579742} (może dyskusja o mnożnikach na koniec?), cone continuity \cite{MR4159570}, nasza praca, praca Mordukhovich \cite{MR3070104})

  The aim of the present investigation  is to provide conditions ensuring the Abadie CQ,
    $$
    \mathcal{ T}_{\mathcal{ F}}(x_{0})=\Gamma_\mathcal{ F}(x_{0}),
    $$
    where $\mathcal{ T}_{\mathcal{ F}}(x_{0})$ is the tangent cone to $\mathcal{ F}$ at $x_0$ (see \eqref{eq_1}) and $\Gamma_{\mathcal{ F}}(x_{0})$ is the linearized cone to $\mathcal{ F}$ at $x_0$ (see \eqref{eq_2}). In the sequel we concentrate on the case $I_0\cup I_1=\mathbb{N}$.

%(Dyskuja o RCRCQ)

Our main tool is Relaxed Constant Rank Constraint Qualification Plus %, RCRCQ+,
introduced in Section \ref{section:Abadie_condition}. Other regularity conditions were recently proposed in  \cite{MR3579742,constant_rank_constraint_Andreani,MR4223867,MR4159570,MR3070104}.

The novelty of our approach relies on the  use of the Schauder bases and basic sequences in definitions of the main concepts, namely in the definitions of the CRC+ (Definition \ref{definition:crcplus}) and RCRCQ+ (Definition \ref{rcrcq_condition}).
Consequently, the assumptions of the main theorem (Theorem \ref{theorem:tangent_cone}), of Proposition \ref{proposition:functional_dependence} on functional dependence and two auxiliary results (Proposition  \ref{proposition:E1_representation}, Proposition \ref{proposition:CRC_and_isomorphism}) are expressed with the help of Schauder bases of some spaces generated by the derivative of the constraint map.

Moreover, in the proof of Proposition  \ref{proposition:functional_dependence} is based on Rank Theorem \ref{theorem:rank}. Up to our knowledge, the only result concerning the existence  Lagrange multipliers for which the proof is based on Rank Theorem is  \cite[Theorem 4.1]{MR4104521}.

\begin{theorem}(\cite[Theorem 4.1]{MR4104521})\label{theorem:blot} Let $x_0$ be a local solution to problem \eqref{problem:P} with $K=\{0\}$ and $F$ be a Banach space. Let $x_0$ be a local solution to \eqref{problem:P}. Assume that the following conditions are fulfilled:
\begin{enumerate}[label=\emph{(\Alph*)}]
    \item\label{assumption_blot1} $f_0$ is Fr\'echet differentiable at $x_0$ and $G$ is of class $C^1$ in a neighbourhood of $x_0$.
    \item\label{assumption_blot2} $E_2:= \ker DG(x_0)$ is topologically complemented in $E$, i.e., $E=E_1 \oplus E_2$, where $E_1$ is a closed subspace of $E$, $F_1:= DG(x_0)(E)$ is closed and topologically complemented in $F$,
    i.e., $F=F_1 \oplus F_2$, where $F_2$ is a closed subspace of $F$.
    \item\label{assumption_blot3} There exists a neighbourhood of $x_0$ such that for all $x$ in this neighbourhood $DG(x)(E)\cap F_2= \{0\}$.
\end{enumerate}
Then there exists $\lambda\in F^*$ such that $Df_0(x_0)=\lambda \circ DG(x_0)$.
\end{theorem}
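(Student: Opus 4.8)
The plan is to use the Rank Theorem (Theorem~\ref{theorem:rank}) to ``flatten'' the feasible set $\mathcal F=\{x\in E\mid G(x)=0\}$ in a neighbourhood of $x_0$, thereby reducing the constrained problem to an unconstrained minimization over the closed subspace $\ker DG(x_0)$; the multiplier is then produced by elementary bookkeeping with the topological complements supplied by~\ref{assumption_blot2}.

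First I would record the purely linear consequences of~\ref{assumption_blot2}. Since $DG(x_0)$ vanishes on $E_2=\ker DG(x_0)$, we have $F_1=DG(x_0)(E)=DG(x_0)(E_1)$, and the restriction $A:=DG(x_0)|_{E_1}\colon E_1\to F_1$ is a continuous linear bijection between the Banach spaces $E_1$ and $F_1$; by the Open Mapping Theorem it is a topological isomorphism. Let $P_{E_1}\colon E\to E_1$ and $P_{F_1}\colon F\to F_1$ be the continuous projections attached to $E=E_1\oplus E_2$ and $F=F_1\oplus F_2$, so $\ker P_{E_1}=E_2$ and $\ker P_{F_1}=F_2$.

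Next I would apply the Rank Theorem to $G$ at $x_0$: hypotheses~\ref{assumption_blot1}--\ref{assumption_blot3} are precisely what it needs, the constant-rank requirement being carried by $DG(x_0)(E)=F_1$ together with $DG(x)(E)\cap F_2=\{0\}$ for $x$ near $x_0$. This yields open neighbourhoods $U\ni x_0$ and $W\ni 0$ with $G(U)\subseteq W$, and $C^1$-diffeomorphisms $\varphi$ near $x_0$ with $\varphi(x_0)=0$ and $\psi$ near $0\in F$ with $\psi(0)=0$, putting $G$ into the normal form $\psi\circ G=A\circ P_{E_1}\circ\varphi$ on $U$. Differentiating at $x_0$ and using the invertibility of $D\psi(0)$ and injectivity of $A$ gives $\ker(P_{E_1}\circ D\varphi(x_0))=\ker DG(x_0)=E_2$, whence $D\varphi(x_0)^{-1}(E_2)=E_2$. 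Because $\psi$ fixes $0$ and $A$ is injective, for $x\in U$ one has $G(x)=0$ iff $\varphi(x)\in E_2$, so $\varphi(\mathcal F\cap U)=\varphi(U)\cap E_2$ is a relatively open neighbourhood of $0$ in $E_2$. As $x_0$ is a local solution of~\eqref{problem:P} with $K=\{0\}$, the origin is a local minimizer of $f_0\circ\varphi^{-1}$ over $\varphi(U)\cap E_2$; by~\ref{assumption_blot1} and the chain rule $f_0\circ\varphi^{-1}$ is Fr\'echet differentiable at $0$, so its derivative there annihilates $E_2$, and pulling this back through $D\varphi(x_0)^{-1}(E_2)=E_2$ yields $Df_0(x_0)|_{\ker DG(x_0)}=0$.

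It remains to construct $\lambda$. I would set $\lambda:=Df_0(x_0)\circ A^{-1}\circ P_{F_1}\in F^*$, reading $A^{-1}$ as the isomorphism $F_1\to E_1$ followed by the inclusion $E_1\hookrightarrow E$; splitting $x=\xi_1+\xi_2\in E_1\oplus E_2$ and using $DG(x_0)x=A\xi_1\in F_1$ and $Df_0(x_0)\xi_2=0$, a one-line computation gives $\lambda(DG(x_0)x)=Df_0(x_0)\xi_1=Df_0(x_0)x$, i.e.\ $Df_0(x_0)=\lambda\circ DG(x_0)$. The step I expect to be the main obstacle is the application of the Rank Theorem: one must check that~\ref{assumption_blot2}--\ref{assumption_blot3} indeed fit the hypotheses of the version being used and, above all, that the normalizing diffeomorphism $\varphi$ can be chosen compatibly with the fixed decomposition $E=E_1\oplus E_2$, in the sense that $D\varphi(x_0)$ preserves $\ker DG(x_0)$ --- this is exactly the property that lets the first-order information on the flattened set be transported back to $Df_0(x_0)$. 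Everything else reduces to routine manipulations with the projections $P_{E_1},P_{F_1}$ and the isomorphism $A$.
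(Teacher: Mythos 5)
Your argument is correct: the linear-algebra step that $A:=DG(x_0)|_{E_1}\colon E_1\to F_1$ is a topological isomorphism, the application of the Rank Theorem (Theorem~\ref{theorem:rank}, with Remark~\ref{remark:assumptionC} converting \ref{assumption_blot3} into the required isomorphism hypothesis) to flatten $\{G=0\}$ and identify its tangent space at $x_0$ with $\ker DG(x_0)$, the resulting first-order condition $Df_0(x_0)|_{\ker DG(x_0)}=0$, and the explicit multiplier $\lambda=Df_0(x_0)\circ A^{-1}\circ P_{F_1}$ all check out. The paper imports this statement from \cite{MR4104521} without reproducing a proof, but it explicitly notes that the cited proof is based on the Rank Theorem, so your route is essentially the intended one.
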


\begin{remark}\label{remark:assumptionC}
By Proposition 1 of \cite{MR857736}, condition \ref{assumption_blot3} is equivalent to condition 
\begin{equation*}
DG(x)|_{E_1} :\ E_1 \rightarrow DG(x)(E)\quad \text{is an isomorphism.}
\end{equation*}
\end{remark}

In Proposition \ref{proposition:E1_representation} of Section \ref{section:Complemented_and_isomorphism} we provide conditions under which we ensure that \ref{assumption_blot2} holds. 
In Proposition \ref{proposition:CRC_and_isomorphism} of Section \ref{section:Complemented_and_isomorphism} we provide conditions under which we ensure that \ref{assumption_blot3} holds. 

The organization of the paper is as follows. In Section \ref{section:preliminaries} we recall basic concepts related to Schauder bases.
 In Section \ref{section:CRC} we introduce Constant Rank Condition (CRC) and Constant Rank Condition Plus (CRC+). In Section \ref{section:Complemented_and_isomorphism}, Conditions \ref{assumption:Aplus}, \ref{assumption:Bplus}, \ref{assumption:Cplus} of CRC+ allows us to prove Proposition \ref{proposition:E1_representation} on the split of $E$. Condition CRC+ allows us to prove Proposition \ref{proposition:CRC_and_isomorphism} on isomorphisms. Section \ref{section:CRCandfunctional} is devoted to the proof of Proposition \ref{proposition:functional_dependence} on functional dependence. In Section \ref{section:Abadie_condition} we introduce Relaxed Constant Rank Constraint Qualification Plus, which is the main ingredient of main results of Section \ref{section:main_result}. In Section \ref{section:main_result} we present main result, namely sufficient conditions under which Abadie condition holds for the investigated problem. The closing Section \ref{section:lagrange_multipliers} is devoted to the topic of existence of Lagrange multipliers to problem \eqref{problem:P}.

%In Section \ref{section:CRC} we provide the discussion of condition \ref{assumption_blot3} and CRC+ introduced in the present paper.

% \section{Formulation of the problem}
% Let $X$ and $Y$ be (real) Banach spaces. Consider optimization problem
% \begin{equation} 
% \label{problem-P}
% \tag{P}
% \text{minimize\,}_{x\in C}\ \ f(x)\ \ \text{subject to  }H(x)=0\,\ \ G(x)\in K,
% \end{equation}
% where  $X$, $Y$, $Z$ are Banach spaces, $f:X\rightarrow\mathbb{R}$, $G:X\rightarrow Y$, $H:X\rightarrow Z$  are continuously Fr\'echet differentiable mappings (around a given point $x_{0}$), $C\subset X$, $K\subset Y$ are  nonempty, closed convex sets. In many cases $K$ is a cone and $C=X$. 

% The feasible set
% $\mathcal{ F}$ of problem \eqref{problem-P} is given as
% \begin{equation} 
% \label{set_f}
% \mathcal{ F}:=\{x\in C\ |\ H(x)=0,\,\ G(x)\in K\}.
% \end{equation}
% In particular, this includes the case
% \begin{equation} 
% \label{set_f1}
% \mathcal{ F}_{1}:=\{x\in C\ |\ \ G(x)\in K\},
% \end{equation}
% where $H\equiv 0$, and the case
% \begin{equation} 
% \label{set_f2}
% \mathcal{ F}_{2}:=\{x\in C\ |\ H(x)=0\},
% \end{equation}
% where  $G\equiv 0$.

\section{Preliminaries}\label{section:preliminaries}

 \begin{definition}
Let $X$ be a normed linear space. A sequence $(b_{i})_{i\in\mathbb{N}}$
in $X$ is called a Schauder basis of $X$ (or simply basis) if for every $x\in X$ there is a unique sequence of scalars $(a_i)_{i\in \mathbb{N}}$, called the coefficients of $x$, such that $x=\sum_{i\in \mathbb{N}} a_i b_i$.
 \end{definition}
 
 If $X$ is finite-dimensional, the notion of Schauder basis coincides with the linear  basis (see \cite[Example (i), Chapter 6]{MR1831176}).

We say that sequence $(b_i)_{i\in \mathbb{N}}\in F$ is a \textit{basic sequence} if $(b_i)_{i\in \mathbb{N}}$ is Schauder basis of $\spanc ( b_i, i\in \mathbb{N} )$,  where $\spanc$ denotes the closure of the span and the closure is taken in the strong topology of the space.

\begin{definition}
For a given, possibly nonconvex, set $Q\subset E$ and $x\in Q$  the {\em tangent (Bouligand) cone} to $Q$ is  defined as
\begin{equation} 
\label{eq_1} 
\mathcal{ T}_{Q}(x):=\{d\in E\ |\ \exists\ \{x_{k}\}\subset Q\,\ \{t_{k}\}\subset \mathbb{R}\,\ x_{k}\rightarrow x,\, t_{k}\downarrow 0,\ (x_{k}-x)/t_{k}\rightarrow d\}.
\end{equation}

% Equivalently,
% \begin{equation} 
% \label{eq_1s} 
% \mathcal{ T}_{Q}(x):=\{d\in E\ |\ d(x+td,Q)\sim o(t)\},
% \end{equation}
% where, for any function $q(t)\sim o(t)$ iff $\lim_{t\downarrow 0}\frac{q(t)}{t}=0.$ %reference

The cone
\begin{equation} 
\label{eq_2}
\Gamma_\mathcal{ F}(x):=\{d\in E\ \mid DG(x)d\in \mathcal{ T}_{K}(G(x))\}
\end{equation}
is called the {\em linearized cone to $\mathcal{ F}$ } at $x$.
\end{definition}

\begin{definition}
    % Let $Y$ Banach space with basis $\{b_i\}_{i\in\mathbb{N}}$,  i.e. for all $x\in Y$ there exists unique $\alpha_i\in \mathbb{R}$, $i\in \mathbb{N}$ such that $x=\sum_{i\in \mathbb{N}} \alpha_i b_i$. 
    % Let $K\subset Y$ be a cone defined as 
    % \begin{equation}
    % \label{cone}
    % K:=\{k\in Y \mid k=\sum_{i\in \mathbb{N}} \alpha_i b_i, \alpha_i\geq  0, i \in \mathbb{N}\}. 
    % \end{equation} 
    For any $x\in \mathcal{ F}$, where $\mathcal{ F}$ is given by \eqref{set:constraints},   $I(x)$ denotes the set of  active  (inequality) indices of $\mathcal{ F}$ at $x$, 
    \begin{equation*}
        I(x):=\{ i\in I_1 \mid g_i(x)=0 \}.
    \end{equation*}
 \end{definition}

\begin{example} 
	Let $G:E\rightarrow \mathbb{R}^{n}$, $G(x)=(g_{1}(x),\dots,g_{n}(x))$ and $K=\mathbb{R}_{-}^n=\{y=(y_{i})\in\mathbb{R}^{n}\ |\ y_{i}\le 0,\ i=1,\dots n\}$, i.e.
	$$
	\mathcal{ F}=\{x\in E\ | \ g_{i}(x)\le 0, \ i=1,\dots, n\}
	$$
	where $g_{i}:E\rightarrow\mathbb{R}$, $i=1,\dots, n$. 
	Let us calculate  $\mathcal{ T}_{K}(G(x_0))$, $x_0\in E$,  $G(x_0)\in\text{cl\,} K=K$.
	\begin{enumerate}
	    \item If $G(x_0)\in \text{int\,} \mathbb{R}_{-}^n$, then $\mathcal{ T}_{K}(G(x_0))=\mathbb{R}^n.$
	    \item If $G(x_0)\in \text{bd\,} \mathbb{R}_{-}^n$, then  $I(x_0):= \{i\in \{1,\dots,n\}\ |\ g_{i}(x_0)=0\}$ is nonempty and $\mathcal{ T}_{K}(G(x_0))=\mathbb{R}_{-}^n=\{y=(y_{i})\in\mathbb{R}^{n}\ |\ y_{i}\le 0,\ \ i\in I(x_0)\}$.
	\end{enumerate}
	
	\end{example}

\begin{proposition}\label{proposition:representation_tangent}
For any $y_0\in K$,  where $K$ is given by \eqref{cone:K}, we have
\begin{equation}
\label{nonnegative_cone_formula}
\mathcal{ T}_{K}(y_{0}) = \{z\in F\ \mid\ b_i^*(z) \leq 0,\ i\in I(y_{0}), \ b_i^*(z)=0,\ i\in I_0\},
\end{equation}
where $I(y_{0}):=\{i\in I_1\ \mid\ b_i^*(y_0)= 0\}$.
\end{proposition}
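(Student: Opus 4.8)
The plan is to prove the two inclusions separately, using the definition of the Bouligand tangent cone together with the specific structure of the basis cone $K$ from \eqref{cone:K} and the weak-star continuity of the coordinate functionals $b_i^*$. Write $T := \{z\in F \mid b_i^*(z)\leq 0,\ i\in I(y_0),\ b_i^*(z)=0,\ i\in I_0\}$ for the right-hand side; note that $T$ is manifestly a closed convex cone (intersection of half-spaces and hyperplanes defined by continuous functionals), and since $y_0\in K$ we have $b_i^*(y_0)=0$ for $i\in I_0$ and $b_i^*(y_0)\leq 0$ for $i\in I_1$, with $I(y_0)$ the active indices among $I_1$.

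For the inclusion $\mathcal{T}_K(y_0)\subseteq T$: take $z\in\mathcal{T}_K(y_0)$, so there are $y_k\in K$, $t_k\downarrow 0$ with $(y_k-y_0)/t_k\to z$. Fix $i\in I_0$. Applying the bounded linear functional $b_i^*$ and using that $b_i^*(y_k)=0=b_i^*(y_0)$ gives $b_i^*\big((y_k-y_0)/t_k\big)=0$ for all $k$, hence $b_i^*(z)=0$. Fix $i\in I(y_0)$. Then $b_i^*(y_k)\leq 0$ and $b_i^*(y_0)=0$, so $b_i^*\big((y_k-y_0)/t_k\big)=b_i^*(y_k)/t_k\leq 0$; passing to the limit yields $b_i^*(z)\leq 0$. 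Thus $z\in T$.

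For the reverse inclusion $T\subseteq\mathcal{T}_K(y_0)$, the natural candidate is to show that $y_0 + tz\in K$ for all small $t>0$ whenever $z\in T$, which would give $z\in\mathcal{T}_K(y_0)$ immediately (take $y_k = y_0 + t_k z$ with any $t_k\downarrow 0$). To check $y_0+tz\in K$ we must verify: first, that $y_0+tz = \sum_{i\in I_0\cup I_1} b_i^*(y_0+tz)\,b_i$ — this is automatic from $y_0, z\in F$ and the hypothesis $I_0\cup I_1=\mathbb{N}$ together with the basis expansion; second, $b_i^*(y_0+tz)=0$ for $i\in I_0$, which holds since both terms vanish there; and third, $b_i^*(y_0+tz)\leq 0$ for $i\in I_1$. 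This last point splits: for $i\in I(y_0)$, $b_i^*(y_0)=0$ and $b_i^*(z)\leq 0$, so the sum is $\leq 0$ for every $t>0$; for $i\in I_1\setminus I(y_0)$, $b_i^*(y_0)<0$ strictly, and we want $b_i^*(y_0)+t\,b_i^*(z)\leq 0$ for small $t$.

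The main obstacle is precisely this last case: we need a single $t>0$ (or a sequence $t_k\downarrow 0$) that works simultaneously for all infinitely many indices $i\in I_1\setminus I(y_0)$, and $b_i^*(z)$ need not be bounded nor $b_i^*(y_0)$ bounded away from $0$ uniformly, so a naive "take $t$ small" argument fails in infinite dimensions. I expect the resolution to exploit that both $(b_i^*(y_0))_i$ and $(b_i^*(z))_i$ are the coefficient sequences of fixed elements of $F$, hence $b_i^*(y_0)\to 0$ and $b_i^*(z)\to 0$ as $i\to\infty$ (coordinate functionals of a Schauder basis are pointwise null on the tail), which forces, for each fixed $t$, only finitely many "bad" indices and controls them; alternatively one constructs the approximating sequence $y_k$ directly by truncating the expansion of $z$ at level $n_k\to\infty$, setting $y_k := y_0 + t_k P_{n_k} z$ where $P_n$ is the canonical projection, choosing $t_k$ small enough (depending on the finitely many $b_i^*(y_0)$, $b_i^*(z)$, $i\leq n_k$) to keep $y_k\in K$, and checking $(y_k-y_0)/t_k = P_{n_k} z \to z$. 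The boundedness of the projections $P_n$ (a standard consequence of the basis being a Schauder basis, cf. Section \ref{section:preliminaries}) guarantees this convergence. I would present the truncation construction as the cleaner route, since it sidesteps needing $y_0+tz\in K$ for a fixed $t$ and only requires feasibility of finitely many coordinates at each stage.
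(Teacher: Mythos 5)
Your proposal is correct and takes essentially the same route as the paper: the forward inclusion by applying the coordinate functionals $b_i^*$ along the defining sequence, and the reverse inclusion via truncation of $z$ to finitely many basis coordinates so that only finitely many inactive constraints $i\in I_1\setminus I(y_0)$ need to be controlled by the choice of $t$. The only cosmetic difference is that the paper shows each truncation $z_k=\sum_{i=1}^k b_i^*(z)b_i$ lies in $\mathcal{T}_K(y_0)$ and then invokes closedness of the tangent cone, whereas you assemble the approximating sequence $y_k=y_0+t_kP_{n_k}z$ directly; both rest on the same truncation idea.
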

\begin{proof}

 Take any $\tilde{z}\in \mathcal{ T}_{K}(y_{0})$. By the definition, there exist $r(t)\in F$ and $\varepsilon_0>0$ such that $r(t)/t\rightarrow 0^+$ and
 \begin{align} 
 \label{eq_def1} 
 y_{0}+t\tilde{z}+r(t)\in K \ \ \  \forall\ \ t\in[0,\varepsilon_{0}).
 \end{align}
 By this, for all $t\in[0,\varepsilon_{0})$
 \begin{align} 
 \label{eq_def2} 
 \begin{aligned}
 & b_{i}^* (t\tilde{z})+ b_{i}^* ( r(t))\le 0\ \ \ \forall\ \ i\in I(y_{0}),\\
 &b_{i}^* ( t\tilde{z})+b_{i}^*( r(t))= 0\ \ \ \forall\ \ i\in I_0,\end{aligned}
 \end{align}
% and
%  \begin{align*} 
%  &t[\langle b_{i}^*\ (\ z\rangle+\langle b_{i}^*\ (\ \frac{r(t)}{t}\rangle]\le 0\ \ \ \forall\ \ i\in I(y_{0}),\\
%  & t[\langle b_{i}^*\ (\ z\rangle+\langle b_{i}^*\ (\ \frac{r(t)}{t}\rangle]= 0\ \ \ \forall\ \ i\in I_0
%  \end{align*}
 and consequently
\begin{align*}
 &  b_{i}^*  ( \tilde{z}\rangle+  b_{i}^* ( \frac{r(t)}{t})\le 0\ \ \ \forall\ \ i\in I(y_{0}),\\
  &  b_{i}^*  ( \tilde{z}\rangle+  b_{i}^* ( \frac{r(t)}{t})= 0\ \ \ \forall\ \ i\in I_0.
 \end{align*}
 Since $r(t)/t\rightarrow 0$ as $t\rightarrow 0^+$, we obtain 
%  If it were 
%  $$
%    b_{i}^*\ ( z\rangle>0
%  $$
%  for some $i\in I(y_{0})$, then it would be
%  $$
%  \frac{r_{i}(t)}{t}>-  b_i^* ( z \rangle =: -z_{i}
%  $$
%  for all $t$ sufficiently small, contradictory to \eqref{eq_def2}. 
% Since $r(t)/t\rightarrow 0^+$, $z_i:=  b_i^* ( z \rangle =0$.
$\mathcal{ T}_{K}(y_{0})\subset\{z\in F\ \mid\   b_{i}^* ( z)\le 0,\ i\in I(y_{0}),\   b_{i}^* ( z)=0,\ i\in I_0\} $. 

Now, to see the converse, take any $\tilde{z}\in \{z\in F\ \mid\   b_{i}^* (\tilde{z})\le 0,\ i\in I(y_{0}),\   b_{i}^* (\tilde{z})=0,\ i\in I_0\}$. We have
\begin{equation*}
    \tilde{z}=\lim_{k\rightarrow +\infty }z_k = \sum_{i=1}^k b_i^*(z)b_i.
\end{equation*}
Since $z_k\in \mathcal{ T}_{K}(y_{0})$, $k\in \mathbb{N}$ and $\mathcal{ T}_{K}(y_{0})$ is closed, $\tilde{z}\in \mathcal{ T}_{K}(y_{0})$.

\end{proof}

\begin{fact}\label{fact:representation_bstar}
Let $\bar{x} \in E$.  Since $G$ is assumed differentiable on $E$ we have
\begin{align*}
0=   &\lim_{h\rightarrow 0} \frac{G(\bar{x}+h)-G(\bar{x})-DG(\bar{x})h}{\|h\|} \\
    &=\lim_{h\rightarrow 0}\frac{ \sum_{i\in \mathbb{N}} b_i^*(G(\bar{x}+h)-G(\bar{x})-DG(\bar{x})h)b_i}{\|h\|}\\
    &=\sum_{i\in \mathbb{N}} \lim_{h\rightarrow 0}\frac{  b_i^*(G(\bar{x}+h)-G(\bar{x})-DG(\bar{x})h)b_i}{\|h\|}\\
    & = \sum_{i\in \mathbb{N}} \lim_{h\rightarrow 0}\frac{ g_i(\bar{x}+h)-g_i(\bar{x})-Dg_i(\bar{x})h}{\|h\|}  b_{i}.
\end{align*}
 Hence, due to the uniqueness of the representation of elements of the space $F$ in basis $(b_{i})_{i\in\mathbb{N}}$ the coefficients $g_{i}(\cdot)=b_i^{*}(g(\cdot))$, $i\in\mathbb{N}$ are differentiable for all $x\in E$
and $b_i^*( DG(\bar{x})z)= Dg_i(x)z $ for any $z\in E$, $i\in \mathbb{N}$. 
\end{fact}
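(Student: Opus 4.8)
The plan is to observe that each coefficient map $g_i = b_i^*\circ G$ is the composition of the $C^1$ map $G\colon E\to F$ with a bounded linear functional, so its differentiability and the formula for its derivative follow immediately. By Proposition \ref{fact:existence_dial_basis} every coordinate functional $b_i^*$ belongs to $F^*$, hence is continuous and, being linear, coincides with its own Fr\'echet derivative everywhere. The one-line argument is then the chain rule for Fr\'echet derivatives: since $G$ is Fr\'echet differentiable at the fixed point $\bar{x}\in E$, the composition $b_i^*\circ G$ is Fr\'echet differentiable at $\bar{x}$ with $Dg_i(\bar{x}) = b_i^*\circ DG(\bar{x})$, which is precisely the asserted identity $b_i^*\bigl(DG(\bar{x})z\bigr)=Dg_i(\bar{x})z$ for all $z\in E$.

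For completeness I would also carry out the direct verification of the defining limit, which is what the displayed computation amounts to once it is organized coordinatewise. For $h\in E\setminus\{0\}$, linearity of $b_i^*$ gives
\begin{equation*}
g_i(\bar{x}+h)-g_i(\bar{x})-b_i^*\bigl(DG(\bar{x})h\bigr)=b_i^*\Bigl(G(\bar{x}+h)-G(\bar{x})-DG(\bar{x})h\Bigr),
\end{equation*}
and therefore, using $|b_i^*(v)|\le\|b_i^*\|\,\|v\|$,
\begin{equation*}
\frac{\bigl|g_i(\bar{x}+h)-g_i(\bar{x})-b_i^*(DG(\bar{x})h)\bigr|}{\|h\|}\le\|b_i^*\|\cdot\frac{\bigl\|G(\bar{x}+h)-G(\bar{x})-DG(\bar{x})h\bigr\|}{\|h\|}.
\end{equation*}
The right-hand side tends to $0$ as $h\to0$ because $G$ is Fr\'echet differentiable at $\bar{x}$; since $b_i^*\circ DG(\bar{x})$ is a composition of bounded linear maps, it is a bounded linear functional on $E$, so $g_i$ is Fr\'echet differentiable at $\bar{x}$ with $Dg_i(\bar{x})=b_i^*\circ DG(\bar{x})$. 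As $\bar{x}\in E$ and $i\in\mathbb{N}$ were arbitrary, all coefficient maps $g_i$ are differentiable throughout $E$.

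I do not expect a genuine obstacle here. The only point worth a word of justification is the interchange of the limit with the infinite sum $\sum_{i\in\mathbb{N}}(\cdot)b_i$ that appears in the displayed chain: this is harmless because it is exactly the continuity of the coordinate functionals $b_i^*$ (guaranteed by Proposition \ref{fact:existence_dial_basis}) that allows one to extract the $i$-th coefficient of a difference quotient converging to $0$ in the norm of $F$, and once the argument is phrased coordinatewise as above no interchange beyond this is required.
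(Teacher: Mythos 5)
Your proof is correct and follows essentially the same route as the paper: both arguments rest on the continuity of the biorthogonal functionals $b_i^*$ applied to the difference quotient of $G$. If anything, your coordinatewise estimate via $|b_i^*(v)|\le\|b_i^*\|\,\|v\|$ (equivalently, the chain rule for $b_i^*\circ G$) is slightly cleaner, since it avoids the term-by-term interchange of the limit with the infinite sum that the paper's displayed computation asserts without justification.
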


As a consequence we obtain the following proposition.%  dowód?

\begin{proposition} \label{proposition:representation_linearized}
Let $x_0\in \mathcal{ F}$, where $\mathcal{ F}$ is given by \eqref{set:constraints}. Then
\begin{align} 
\label{linearized_cone_formula}
\begin{aligned}
\Gamma_{\mathcal{ F}}(x_{0})=&\{d\in E \mid   DG(x_{0}) d\in \mathcal{ T}_{K}(G(x_{0}))\}\\
\subset &\{d\in E \mid   Dg_{i}(x_{0}) d= 0,\ \ i\in I_0,\\ &  Dg_{i}(x_{0}) d\le 0,\ \ i\in I(x_{0})\},
\end{aligned}
\end{align}
where $I(x_{0}):=\{i\in I_1 \mid g_{i}(x_{0})=0\}$. 
\end{proposition}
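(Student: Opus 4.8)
The plan is to obtain \eqref{linearized_cone_formula} by combining the two structural facts already in hand: the explicit description of $\mathcal{T}_K(y_0)$ for $y_0\in K$ from Proposition~\ref{proposition:representation_tangent}, and the identity $b_i^*(DG(\bar x)z)=Dg_i(\bar x)z$ recorded in Fact~\ref{fact:representation_bstar}. The first line of \eqref{linearized_cone_formula} is simply the definition \eqref{eq_2} of the linearized cone $\Gamma_{\mathcal{F}}(x_0)$, so the only thing to prove is the displayed inclusion.

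First I would fix $d\in\Gamma_{\mathcal{F}}(x_0)$ and put $z:=DG(x_0)d$ and $y_0:=G(x_0)$. Since $x_0\in\mathcal{F}$ we have $y_0\in K$, so Proposition~\ref{proposition:representation_tangent} applies to $y_0$; moreover, using $g_i=b_i^*\circ G$, the active index set of $y_0$ satisfies $I(y_0)=\{i\in I_1\mid b_i^*(G(x_0))=0\}=\{i\in I_1\mid g_i(x_0)=0\}=I(x_0)$. By definition of $\Gamma_{\mathcal{F}}(x_0)$ we have $z\in\mathcal{T}_K(y_0)$, hence Proposition~\ref{proposition:representation_tangent} gives $b_i^*(z)=0$ for all $i\in I_0$ and $b_i^*(z)\le 0$ for all $i\in I(x_0)$. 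It then remains to rewrite these coordinate conditions in terms of the scalar functions $g_i$: by Fact~\ref{fact:representation_bstar} one has $b_i^*(z)=b_i^*(DG(x_0)d)=Dg_i(x_0)d$ for every $i\in\mathbb{N}$, so that $Dg_i(x_0)d=0$ for $i\in I_0$ and $Dg_i(x_0)d\le 0$ for $i\in I(x_0)$. This is precisely membership of $d$ in the set on the right-hand side of \eqref{linearized_cone_formula}, which finishes the argument.

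I do not expect a genuine obstacle here: the statement is a direct corollary of Proposition~\ref{proposition:representation_tangent} and Fact~\ref{fact:representation_bstar}, and the only place that needs a moment's attention is the identification of the active index set $I(G(x_0))$ of the point $G(x_0)\in K$ with the active set $I(x_0)$ of $\mathcal{F}$ at $x_0$, which is immediate from $g_i=b_i^*\circ G$. One could also note that the inclusion is in general strict, since a $d$ satisfying the scalar sign conditions on $I_0$ and $I(x_0)$ need not make $DG(x_0)d$ land in $\mathcal{T}_K(G(x_0))$ once $I_0\cup I_1\subsetneq\mathbb{N}$; equality is recovered in the case $I_0\cup I_1=\mathbb{N}$ treated in the sequel.
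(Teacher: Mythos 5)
Your argument is correct and coincides with the paper's own proof: both deduce the inclusion by applying Proposition~\ref{proposition:representation_tangent} to $z=DG(x_0)d$ and then translating the coordinate conditions $b_i^*(DG(x_0)d)$ into $Dg_i(x_0)d$ via Fact~\ref{fact:representation_bstar}. Your explicit identification of $I(G(x_0))$ with $I(x_0)$ is a small point the paper leaves implicit, but it is the same proof.
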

\begin{proof}

Take any $d \in \Gamma_\mathcal{ F}(x_0)$. This means that 
\begin{equation}\label{inclusion:intangent}
      DG(x_0)  d  \in \mathcal{ T}_{K}(G(x_0)).
    \end{equation}
By Proposition \ref{proposition:representation_tangent},
\begin{equation*}
    \mathcal{ T}_{K}(G(x_0))\subset \{z\in F\ \mid\ b_i^*(z) = 0,\ i\in I_0,\ b_i^*(z) \leq 0,\ i\in I(G(x_0))\}.
\end{equation*}
Hence, by \eqref{inclusion:intangent},
\begin{equation*}
    b_i^*(  DG(x_0)  d )= 0, \ i \in I_0 ,\ b_i^*(  DG(x_0)  d )\leq 0, \ i \in I(G(x_0)).
\end{equation*}
Now \eqref{linearized_cone_formula} follows from Fact \ref{fact:representation_bstar}. 
% Now we will show that $b_i^*(  DG(x_0) \mid z \rangle)=   Dg_i(x_0) \mid z \rangle $ for any $z\in E$, $i\in \mathbb{N}$.
% Let $x\in E$, then
% \begin{align*}
%     &\lim_{h\rightarrow 0} \frac{G(x_0+h)-G(x_0)-DG(x_0)h}{\|h\|} \\
%     &=\lim_{h\rightarrow 0}\frac{ \sum_{i\in \mathbb{N}} b_i^*(G(x_0+h)-G(x_0)-DG(x_0)h)b_i}{\|h\|}\\
%     &=\sum_{i\in \mathbb{N}} \lim_{h\rightarrow 0}\frac{  b_i^*(G(x_0+h)-G(x_0)-DG(x_0)h)b_i}{\|h\|}\\
%     & = \sum_{i\in \mathbb{N}} \lim_{h\rightarrow 0}\frac{ g_i(x_0+h)-g_i(x_0)-Dg_i(x_0)h}{\|h\|}.
% \end{align*}
% which proves \eqref{linearized_cone_formula}.
 
% \begin{align*}
%     \langle DG(x_0) \mid z\rangle &= \sum_{i\in \mathbb{N}} b_i^*(\langle DG(x_0) \mid z\rangle) b_i = \sum_{i\in \mathbb{N}} D(b_i^*(\langle G(x_0) \mid z\rangle)) b_i\\
%     &= \sum_{i\in \mathbb{N}} D((\langle g_i(x_0) \mid z\rangle)) b_i 
% \end{align*}

\end{proof}

  \subsection{ Boundedly-complete, shrinking, Besselian and Hilbertian bases}
 
In this subsection we recall basic definitions and facts related selected types of bases in Banach spaces. These concepts  will be extensively used  in the sequel.
 \begin{definition}
The closed subspace $E_1$ of the Banach space $E$ is said to be split, or complemented,
if there is a closed subspace $E_2 \subset E$ such that $E = E_1 \oplus E_2$.
\end{definition}

\begin{proposition}\label{proposition_splits} (\cite[Theorem 2.1.15]{manifolds_tensor_vol2})
	If $F$ is a Hilbert space and $F_1$ a closed subspace, then $F=F_1\oplus F_1^\perp$. Thus every closed subspace of a Hilbert space splits (see e.g. Definition 2.1.14 of \cite{manifolds_tensor_vol2}).
\end{proposition}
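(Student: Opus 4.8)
The plan is to invoke the orthogonal projection theorem for Hilbert spaces, which is precisely the content of \cite[Theorem 2.1.15]{manifolds_tensor_vol2}; so one may either cite it directly or reconstruct the short argument. First I would record that $F_1^\perp := \{ z \in F \mid \langle z, y\rangle = 0 \text{ for all } y \in F_1 \}$ is a closed linear subspace of $F$, being the intersection $\bigcap_{y \in F_1} \ker \langle \cdot, y\rangle$ of kernels of continuous linear functionals, and that $F_1 \cap F_1^\perp = \{0\}$, since any $z$ in the intersection satisfies $\|z\|^2 = \langle z, z\rangle = 0$. Thus the only substantive point is that every $x \in F$ admits a decomposition $x = p + q$ with $p \in F_1$ and $q \in F_1^\perp$.

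For this, fix $x \in F$, put $d := \inf_{y \in F_1} \|x - y\|$, and choose a minimizing sequence $(y_n) \subset F_1$ with $\|x - y_n\| \to d$. The parallelogram law, applied to $x - y_n$ and $x - y_m$, gives
\[
\|y_n - y_m\|^2 = 2\|x - y_n\|^2 + 2\|x - y_m\|^2 - 4\bigl\| x - \tfrac{y_n + y_m}{2}\bigr\|^2 \le 2\|x - y_n\|^2 + 2\|x - y_m\|^2 - 4d^2,
\]
where the inequality uses $\tfrac{y_n+y_m}{2} \in F_1$. The right-hand side tends to $0$, so $(y_n)$ is Cauchy; by completeness of $F$ and closedness of $F_1$ it converges to some $p \in F_1$ with $\|x - p\| = d$. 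A standard variational argument — the function $t \mapsto \|x - p - ty\|^2$ has a global minimum at $t = 0$ for every $y \in F_1$, hence vanishing derivative there — shows $\langle x - p, y\rangle = 0$ for all $y \in F_1$, i.e. $x - p \in F_1^\perp$. Therefore $x = p + (x - p)$ is the desired decomposition, and together with $F_1 \cap F_1^\perp = \{0\}$ this yields $F = F_1 \oplus F_1^\perp$. The concluding sentence is then immediate, since $F_1^\perp$ is a closed subspace, so $F_1$ splits in the sense of the preceding definition.

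There is essentially no genuine obstacle; the single step that requires care is the Cauchy estimate for the minimizing sequence, and this is exactly where the Hilbert (parallelogram) structure and completeness of $F$ are used in an essential way, in contrast to a general Banach space where a closed subspace need not be complemented. Everything else is routine bookkeeping.
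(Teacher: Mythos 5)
Your proof is correct: it is the standard projection‑theorem argument (minimizing sequence plus the parallelogram law, then orthogonality of the residual), which is exactly the proof of the cited Theorem 2.1.15 in the reference. The paper itself gives no proof and simply cites that result, so there is nothing further to compare.
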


 \begin{definition}(\cite[Definition 3.2.8]{MR2192298})
 Let $X$ be a Banach space. A sequence $(v_n)_{n\in \mathbb{N}}$ in $X$ is \textit{boundedly-complete} if whenever $(a_n)_{n\in \mathbb{N}}$ is a sequence of scalars such that
 \begin{equation*}
     \sup_{N\in \mathbb{N}} \| \sum_{n=1}^N a_n v_n \|< +\infty
 \end{equation*}
 then the series $\sum_{n\in \mathbb{N}}a_n v_n$ converges.
 \end{definition}
 
 \begin{remark}\label{remark:boundedlycompletesubseq}
 Let $(v_n)_{n\in \mathbb{N}}$ in $X$  be boundedly-complete.  Then  every subsequence $(v_{n_k})_{k \in \mathbb{N}} $ is boundedly-complete. Indeed,  suppose that $(v_n)_{n\in \mathbb{N}}$ is boundedly-complete and   $(a_{k})$, $k\in \mathbb{N}$, is such that $\sup_{N\in \mathbb{N}} \|\sum_{k=1}^N a_{k} v_{n_k} \|< \infty$. Let
 \begin{equation*}
     \bar{a}_k=\left\{ \begin{array}{ll}
          a_{k},   & k\in \{n_l\}_{l\in \mathbb{N}},\\
          0, & \text{otherwise}.
     \end{array} \right.
 \end{equation*}
 Then
 \begin{align*}
     &\sup_{N\in \mathbb{N}} \|\sum_{k=1}^N a_{k} v_{n_k} \|< \infty \iff \sup_{N\in \mathbb{N}} \|\sum_{n=1}^N \bar{a}_n v_n \|< \infty \\
     & \implies \sum_{n\in \mathbb{N}} \bar{a}_n v_n \text{ converges} \iff \sum_{k\in \mathbb{N}}a_{k} v_{n_k} \text{ converges}.
 \end{align*}

 \end{remark}
 \begin{definition}(\cite[Definition 3.2.5]{MR2192298})
 A basis $(v_n)_{n\in \mathbb{N}}$ of a Banach space $X$ is \textit{shirnking} if the sequence of its bioorthogonal functionals $(v_n^*)_{n\in \mathbb{N}}$ is a basis of $X^*$, i.e., $\spanc(v_n^*,\ n\in \mathbb{N})=X^*$.
 
 \end{definition}
 
 \begin{proposition}(\cite[Theorem 3.2.10]{MR2192298})\label{propozycja-izom}
Let $(v_n, v_n^*)$ be a biorthogonal system in    
%Schauder basis of 
a Banach space $X$. The following are equivalent:
\begin{enumerate}
    \item $(v_n)_{n\in \mathbb{N}} $ is boundedly-complete,
    \item $(v_n^*)_{n\in \mathbb{N}} $ is shrinking basis for $\spanc(v_n^{*},\ n\in \mathbb{N})$ 
    %(i.e., $(v_n^{**})_{n\in \mathbb{N}}$ is a basis for $X^*$),
    \item the canonical map $\operatorname{eval}_X:\ X\rightarrow \spanc(v_n^*,\ n\in \mathbb{N})^*$ defined by $\operatorname{eval}_X (x)(h)=h(x)$ for all $x\in X$ and $h\in \spanc(v_n^*,\ n\in \mathbb{N})$, is an isomorphism.
\end{enumerate}
%If  then $X$ is isomorphic to $(\spanc (v_n^*))^*$. 
\end{proposition}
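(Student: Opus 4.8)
The statement to prove is Proposition \ref{propozycja-izom}, the equivalence between bounded-completeness of $(v_n)$, shrinkingness of $(v_n^*)$ as a basis of its closed span, and the canonical evaluation map being an isomorphism onto $\spanc(v_n^*)^*$. This is a classical result (it is essentially the characterization of boundedly-complete bases via duality), so the plan follows the standard route.

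\medskip

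The plan is to prove the cycle $(1)\Rightarrow(3)\Rightarrow(2)\Rightarrow(1)$, or a convenient variant. First I would set $Y:=\spanc(v_n^*,\ n\in\mathbb{N})\subset X^*$; by the definition of a shrinking basis and the fact that the biorthogonal functionals always form a basic sequence, $(v_n^*)$ is a Schauder basis of $Y$ with biorthogonal functionals that are the restrictions $\operatorname{eval}_X(v_n)|_Y$. The evaluation map $\operatorname{eval}_X:X\to Y^*$ is always a bounded linear map with $\|\operatorname{eval}_X(x)\|\le\|x\|$; moreover it is \emph{bounded below} — this uses that $\spanc$ of the $v_n^*$ is norming for $\spanc(v_n,\ n\in\mathbb{N})=X$ because the partial-sum projections $P_N x=\sum_{n=1}^N v_n^*(x)v_n$ are uniformly bounded (basis constant), hence $\|x\|\le C\sup_N\|P_Nx\|=C\sup_N|\text{(a functional in }Y\text{ of norm }\le 1)\text{ applied to }x|$. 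Thus $\operatorname{eval}_X$ is always an isomorphism onto its \emph{image}; the content of (3) is precisely that the image is all of $Y^*$, i.e. surjectivity.

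\medskip

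For $(1)\Rightarrow(3)$: take $\varphi\in Y^*$. Put $a_n:=\varphi(v_n^*)$. The partial sums $\sum_{n=1}^N a_n v_n$ have norms controlled by $\|\varphi\|$ times the basis constant of $(v_n^*)$ in $Y$ — because $\sum_{n=1}^N a_n v_n$, viewed through $\operatorname{eval}_X$, agrees with $\varphi$ on $\spanf(v_1^*,\dots,v_N^*)$ composed with the partial-sum projection, whose norm is uniformly bounded. Hence $\sup_N\|\sum_{n=1}^N a_n v_n\|<\infty$, so by bounded-completeness $x:=\sum_{n\in\mathbb{N}}a_nv_n$ exists in $X$, and one checks $\operatorname{eval}_X(x)=\varphi$ by evaluating on each $v_n^*$ and using continuity. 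This gives surjectivity, hence (3). The converse $(3)\Rightarrow(1)$: given $(a_n)$ with $\sup_N\|\sum_{n=1}^N a_nv_n\|<\infty$, the elements $\operatorname{eval}_X(\sum_{n=1}^N a_nv_n)\in Y^*$ are uniformly bounded and converge pointwise on the dense set $\spanf(v_n^*)$ (indeed are eventually constant there), hence — being uniformly bounded — converge weak-* to some $\varphi\in Y^*$; by surjectivity $\varphi=\operatorname{eval}_X(x)$ for some $x\in X$, and then $v_n^*(x)=a_n$, and $x=\sum a_nv_n$ converges by the basis property of $(v_n)$, so the series converges. For $(1)\Leftrightarrow(2)$ I would either cite the standard fact that a basis is shrinking iff the tail projections $(I-P_N)$ converge to $0$ strongly on the dual — equivalently $(v_n^*)$ spans $Y$ densely, which is automatic here since $Y$ is \emph{defined} as that closed span — so (2) is really just the assertion that $(v_n^*)$ is a basis of $Y$, which combined with the evaluation-map description above is equivalent to (1) and (3). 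In fact the cleanest organization is: $(1)\Leftrightarrow(3)$ as above, and $(3)\Leftrightarrow(2)$ by unwinding what "shrinking for its own closed span" means together with the duality between $(v_n^*)$ and $\operatorname{eval}_X(v_n)$.

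\medskip

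The main obstacle, and the step needing the most care, is the surjectivity half of $(1)\Rightarrow(3)$: correctly identifying the scalars $a_n=\varphi(v_n^*)$ as the coefficients of a point of $X$ and verifying the uniform bound $\sup_N\|\sum_{n\le N}a_nv_n\|<\infty$ from $\|\varphi\|$ and the basis constant of $(v_n^*)$ in $Y$. This is where the interplay between the two biorthogonal systems $(v_n,v_n^*)$ and $(v_n^*,\operatorname{eval}_X v_n|_Y)$ must be handled precisely, using that the partial-sum projections of the basis $(v_n^*)$ of $Y$ are the adjoints (restricted to $Y$) of the partial-sum projections $P_N$ of $X$. Everything else — bounded-below-ness of $\operatorname{eval}_X$, weak-* compactness arguments, density of $\spanf(v_n^*)$ in $Y$ — is routine once that identification is in place.
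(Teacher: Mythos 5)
Your proof is correct and follows the standard argument: the paper itself gives no proof of this proposition, citing it as Theorem 3.2.10 of Albiac--Kalton, and your route ($\operatorname{eval}_X$ is always an isomorphism onto its image because the partial-sum adjoints make $\spanc(v_n^*,\ n\in\mathbb{N})$ norming, then $(1)\Leftrightarrow(3)$ via the coefficients $a_n=\varphi(v_n^*)$ and $(3)\Leftrightarrow(2)$ via the duality between $(v_n^*)$ and $\operatorname{eval}_X(v_n)$) is exactly the standard one from that reference. The only wobble is the parenthetical claim that shrinkingness of $(v_n^*)$ for its own closed span reduces to ``$(v_n^*)$ spans $Y$ densely'' --- that would only say it is a basis of $Y$, whereas shrinking means $\spanc(\operatorname{eval}_X(v_n)|_Y,\ n\in\mathbb{N})=Y^*$ --- but you immediately discard that route in favour of the correct one, so there is no real gap.
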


\begin{remark} (Corollary 3.2.11 of \cite{MR2192298}) 
Every Schauder basis of a Hilbert space is boundedly-complete. $c_0$ has no boundedly-complete basis.
\end{remark}

Let us recall James Theorem from 1951.
\begin{theorem}\label{theorem:James}
(James theorem, see \cite[Theorem 3.2.13]{MR2192298})
%[Albiac, Kalton, 2006, Theorem 3.2.13]
Let $X$ be  a Banach space. If $X$ has a basis $(v_n)_{n\in \mathbb{N}}$ then $X$ is reflexive if and only if $(v_n)_{n\in \mathbb{N}}$ is both  boundedly-complete and shrinking. 
\end{theorem}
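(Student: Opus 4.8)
The plan is to prove the two implications separately: for the ``if'' direction I will lean on the isomorphism characterisation of boundedly-complete bases in Proposition \ref{propozycja-izom}, and for the ``only if'' direction on weak compactness of bounded sets in a reflexive space together with a Hahn--Banach argument.

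\emph{Sufficiency.} Assume $(v_n)_{n\in\mathbb{N}}$ is both boundedly-complete and shrinking. Since it is shrinking, its biorthogonal functionals form a basis of the whole dual, so $\spanc(v_n^*,\ n\in\mathbb{N})=X^*$. Since it is boundedly-complete, the equivalence of items (1) and (3) in Proposition \ref{propozycja-izom} shows that $\operatorname{eval}_X\colon X\to \spanc(v_n^*,\ n\in\mathbb{N})^*=X^{**}$ is an isomorphism onto $X^{**}$. But $\operatorname{eval}_X$ is precisely the canonical embedding $x\mapsto(\psi\mapsto\psi(x))$ of $X$ into its bidual, so its surjectivity is exactly the statement that $X$ is reflexive. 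This direction is essentially immediate once Proposition \ref{propozycja-izom} is available.

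\emph{Necessity.} Assume $X$ is reflexive. For \emph{boundedly-complete}, let $(a_n)_{n\in\mathbb{N}}$ be scalars with $M:=\sup_{N}\|\sum_{n=1}^N a_n v_n\|<\infty$ and set $s_N:=\sum_{n=1}^N a_n v_n$. The sequence $(s_N)$ lies in the closed ball of radius $M$, which is weakly compact since $X$ is reflexive; by the Eberlein--{\v S}mulian theorem it is weakly sequentially compact, so some subsequence $s_{N_k}$ converges weakly to some $x\in X$. Applying the bounded (hence weakly continuous) functional $v_n^*$ yields $v_n^*(x)=\lim_k v_n^*(s_{N_k})=a_n$ for every $n$, so the basis expansion of $x$ is $x=\sum_n a_n v_n$; its $N$-th partial sum is $s_N$, whence $s_N\to x$ in norm and the series $\sum_n a_n v_n$ converges. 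For \emph{shrinking}, it suffices to show $\spanc(v_n^*,\ n\in\mathbb{N})=X^*$. If this closed subspace were proper, Hahn--Banach would produce $\Psi\in X^{**}$ with $\Psi\neq 0$ and $\Psi(v_n^*)=0$ for all $n$; by reflexivity $\Psi=\operatorname{eval}_X(x)$ for some $x\in X$, so $v_n^*(x)=\Psi(v_n^*)=0$ for all $n$, hence $x=\sum_n v_n^*(x)v_n=0$ and therefore $\Psi=0$, a contradiction.

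The one point that requires care, in the boundedly-complete part, is the passage from reflexivity to the existence of a weakly convergent subsequence of the bounded sequence $(s_N)$: this uses that bounded sets in a reflexive space are relatively weakly compact and that weak compactness entails weak sequential compactness (Eberlein--{\v S}mulian) -- alternatively, that $X$, having a basis, is separable and (being reflexive) has separable dual, so the weak topology is metrizable on bounded sets. All the remaining steps are routine manipulations with the biorthogonal system.
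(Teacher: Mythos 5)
Your argument is correct: the sufficiency direction correctly combines the shrinking hypothesis ($\spanc(v_n^*,\ n\in\mathbb{N})=X^*$) with the equivalence of boundedly-complete and the surjectivity of $\operatorname{eval}_X$ from Proposition \ref{propozycja-izom}, and the necessity direction (weak compactness plus biorthogonality for boundedly-complete, Hahn--Banach plus reflexivity for shrinking) is the standard textbook proof. Note that the paper does not prove this statement at all --- it is quoted from \cite[Theorem 3.2.13]{MR2192298} --- so there is no in-paper argument to compare against; your write-up matches the classical proof in that reference, and the one delicate point (extracting a weakly convergent subsequence of the bounded partial sums) is handled correctly.
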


\begin{definition}(see Definition 11.1 of \cite{MR0298399}) 
We say that basis $(c_i)_{i\in \mathbb{N}}$ in a real Banach space is
\begin{enumerate}
    \item \textit{Besselian} if
\begin{equation*}
    \sum_{i\in \mathbb{N}} \alpha_i c_i \quad \text{converges} \implies \sum_{i\in \mathbb{N}} (\alpha_i)^2<+\infty .
\end{equation*}
    \item \textit{Hilbertian} if
    \begin{equation*}
    \sum_{i\in \mathbb{N}} (\alpha_i)^2<+\infty  \implies \sum_{i\in \mathbb{N}} \alpha_i c_i \quad \text{converges},
\end{equation*}
i.e., for every $\alpha_i \in \mathbb{R}$, $i\in \mathbb{N}$,  with $\sum_{i\in \mathbb{N}} (\alpha_i)^2<+\infty $ there exists an (obviously unique) $x$ such that
\begin{equation*}
    c_i^*(x)=\alpha_i, \quad i\in \mathbb{N}.
\end{equation*}
\end{enumerate}

\end{definition}
\begin{remark}
The natural basis of $\ell_2$ is Besselian. 
Not all bases in Hilbert spaces are Besselian or Hilbertian. For $L_2[-\pi,\pi]$ see  \cite[Example 11.2]{MR0298399}. 
%{\color{red} There exist Banach spaces which do not contain any Besselian  basis, see e.g.S.J. Szarek, Nonexistence of Besselian basis in C(S), Journal of Functional Analysis 7, 56-67 (1980)}
\end{remark}

\begin{remark}(Corollary 4.6 of \cite{MR331025})
The space $L_1[0,1]$ has a Besselian basis. 
\end{remark}

\begin{lemma} (\cite[Lemma 3.1]{MR303262}) \label{lemma:besseliantohilbertian}
 The basis $(c_i)_{i\in \mathbb{N}}$ of Banach space $X$ is Hilbertian (Besselian) if and only if the basic sequence $(c_i^*)_{i\in \mathbb{N}}$ in $X^*$ is Besselian (Hilbertian). 
\end{lemma}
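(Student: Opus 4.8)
The plan is to reformulate the Besselian and Hilbertian properties as boundedness of the synthesis and analysis operators attached to $(c_i)$ and $(c_i^*)$, with $\ell_2$ as the model sequence space, and then to relate a basis to its biorthogonal sequence simply by passing to Banach-space adjoints; the closed graph theorem turns ``defined on the whole space'' into ``bounded'' throughout. Concretely: $(c_i)$ is Hilbertian iff the synthesis map $S\colon\ell_2\to X$, $Se_i=c_i$, $S(\alpha_i)_i=\sum_i\alpha_i c_i$, is everywhere defined (hence bounded); $(c_i)$ is Besselian iff the analysis map $A\colon X\to\ell_2$, $Ax=(c_i^*(x))_i$, is everywhere defined (hence bounded) --- here one uses that $x=\sum_i c_i^*(x)c_i$ for all $x$ and that basis coefficients are unique, so a convergent series $\sum_i\alpha_i c_i=x$ must have $\alpha_i=c_i^*(x)$. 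For the basic sequence $(c_i^*)$ set $Y:=\spanc(c_i^*,\ i\in\mathbb{N})\subseteq X^*$; if $\varphi=\sum_j\beta_j c_j^*$ in $X^*$ then $\varphi(c_i)=\beta_i$ by continuity of evaluation at $c_i$, so the coefficient functionals of $(c_i^*)$ are the maps $\varphi\mapsto\varphi(c_i)$ and $\varphi=\sum_i\varphi(c_i)c_i^*$ for every $\varphi\in Y$. Hence $(c_i^*)$ is Hilbertian iff $\tilde S\colon\ell_2\to X^*$, $\tilde S(\beta_i)_i=\sum_i\beta_i c_i^*$, is everywhere defined (its range then lies in $Y$), and $(c_i^*)$ is Besselian iff $\tilde A\colon Y\to\ell_2$, $\tilde A\varphi=(\varphi(c_i))_i$, is everywhere defined.

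With these reformulations three of the four implications follow from a single adjoint each. If $(c_i)$ is Besselian then $A^*\colon\ell_2\to X^*$ is bounded with $A^*e_i=c_i^*$, so $\sum_i\beta_i c_i^*=A^*\!\big(\sum_i\beta_i e_i\big)$ converges for every $(\beta_i)\in\ell_2$ and $(c_i^*)$ is Hilbertian. If $(c_i^*)$ is Hilbertian then $\tilde S^*\colon X^{**}\to\ell_2$ is bounded with $(\tilde S^*\xi)_i=\xi(c_i^*)$; evaluating at the canonical image of $x\in X$ gives a bounded map $x\mapsto(c_i^*(x))_i$, so $(c_i)$ is Besselian. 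If $(c_i)$ is Hilbertian then $S^*\colon X^*\to\ell_2$ is bounded with $(S^*\varphi)_i=\varphi(c_i)$, and its restriction to $Y$ is exactly $\tilde A$, so $(c_i^*)$ is Besselian.

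The remaining implication, ``$(c_i^*)$ Besselian $\Rightarrow$ $(c_i)$ Hilbertian'', is the main obstacle, precisely because $(c_i^*)$ is only a basic sequence: $\tilde A$ is a priori defined only on the proper closed subspace $Y\subseteq X^*$, so it cannot be dualized directly back onto $X$. The way around this is to exploit that $(c_i)$ is a genuine basis of $X$, not merely a basic sequence. Its partial-sum projections $P_n x=\sum_{i\le n}c_i^*(x)c_i$ are uniformly bounded, $K:=\sup_n\|P_n\|<\infty$, and one computes $P_n^*\varphi=\sum_{i\le n}\varphi(c_i)c_i^*\in Y$ with $\|P_n^*\varphi\|\le K\|\varphi\|$. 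Applying $\tilde A$ to $(P_n^*-P_{m-1}^*)\varphi=\sum_{i=m}^n\varphi(c_i)c_i^*$ gives
\[
\Big(\sum_{i=m}^n\varphi(c_i)^2\Big)^{1/2}\le 2K\,\|\tilde A\|\;\|\varphi\|\qquad\text{for all }\varphi\in X^*,\ 1\le m\le n.
\]
Hence, for any $(\alpha_i)\in\ell_2$, the partial sums $s_n:=\sum_{i\le n}\alpha_i c_i\in X$ satisfy, by duality and the Cauchy--Schwarz inequality,
\[
\|s_n-s_m\|=\sup_{\|\varphi\|\le 1}\Big|\sum_{i=m+1}^n\alpha_i\varphi(c_i)\Big|\le 2K\,\|\tilde A\|\Big(\sum_{i=m+1}^n\alpha_i^2\Big)^{1/2},
\]
and the right-hand side tends to $0$ as $m,n\to\infty$. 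Thus $(s_n)$ is Cauchy in $X$, the series $\sum_i\alpha_i c_i$ converges, and $(c_i)$ is Hilbertian. (Alternatively, the first display with $m=1$ shows that $\varphi\mapsto(\varphi(c_i))_i$ is a bounded operator $T\colon X^*\to\ell_2$; its adjoint sends $e_i$ to the canonical image of $c_i$, so $\sum_i\alpha_i c_i$ converges in $X^{**}$, and since the canonical image of $X$ is norm-closed in $X^{**}$ the series already converges in $X$.) Combining the four implications yields the two asserted equivalences.
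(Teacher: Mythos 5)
The paper does not prove this lemma; it is quoted verbatim from the literature (Singer's Lemma 3.1 in the cited reference), so there is no in-paper argument to compare against. Your proof is correct and self-contained. The reformulation of Hilbertian/Besselian as everywhere-definedness (hence, via the closed graph theorem, boundedness) of the synthesis and analysis operators is sound, the three ``easy'' implications via adjoints are carried out correctly (in particular $A^*e_i=c_i^*$, $(\tilde S^*Jx)_i=c_i^*(x)$, $(S^*\varphi)_i=\varphi(c_i)$ are all verified by the right computations), and you correctly isolate the genuine difficulty in the fourth implication: $\tilde A$ lives only on the closed subspace $Y=\spanc(c_i^*,\ i\in\mathbb{N})$ of $X^*$, which need not be all of $X^*$, so it cannot simply be dualized back. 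Your workaround --- pushing an arbitrary $\varphi\in X^*$ into $Y$ via the adjoints $P_n^*$ of the uniformly bounded partial-sum projections of the basis $(c_i)$, extracting the uniform square-sum bound $\bigl(\sum_{i=m}^n\varphi(c_i)^2\bigr)^{1/2}\le 2K\|\tilde A\|\,\|\varphi\|$, and then concluding by duality and Cauchy--Schwarz that the partial sums of $\sum_i\alpha_i c_i$ are Cauchy --- is exactly the right use of the hypothesis that $(c_i)$ is a basis (not merely a basic sequence), and the alternative closing argument via $T^*$ and the norm-closedness of $J(X)$ in $X^{**}$ is also valid. This is essentially the standard duality proof of the result, executed correctly.
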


  \begin{proposition}(\cite[Proposition 3.1]{MR0298399})\label{proposition:Y_Banach} Let $\{v_n\}$ be a sequence in Banach space $X$, and assume that $v_n\neq 0$ for every $n$.
  
  Define $Y:=\{ (c_n)\mid \sum_{n} c_n v_n\quad \text{converges in } X\} $ and set
 \begin{equation}\label{normY}
     \|(c_n)\|_Y:= \sup_{N} \| \sum_{n=1}^N c_n v_n \|.
 \end{equation}
 Then the following hold:
 \begin{enumerate}
     \item $Y$ is a Banach space.
     \item If $\{v_n\}$ is a basis for $X$, then $Y$ is topologically isomorphic to $X$ via the mapping $(c_n)\mapsto \sum_{n} c_n v_n$.
 \end{enumerate}
 \end{proposition}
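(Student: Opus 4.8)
The plan is to prove Proposition \ref{proposition:Y_Banach} directly from definitions, treating the two claims in turn.

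For part (1), I would first observe that $\|\cdot\|_Y$ is well-defined and finite on $Y$: if $(c_n)\in Y$ then $\sum_n c_n v_n$ converges, so its partial sums $S_N=\sum_{n=1}^N c_n v_n$ form a convergent (hence bounded) sequence in $X$, giving $\|(c_n)\|_Y=\sup_N\|S_N\|<\infty$. Checking that $\|\cdot\|_Y$ is a norm is routine (absolute homogeneity and the triangle inequality pass through the supremum; positive definiteness uses that $v_1\neq 0$, since $\|(c_n)\|_Y=0$ forces $c_1 v_1 = S_1 = 0$ and then inductively $c_n v_n = S_n - S_{n-1}=0$, so all $c_n=0$ because each $v_n\neq 0$). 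The substance of part (1) is completeness. I would take a Cauchy sequence $(c^{(k)})_{k\in\mathbb{N}}$ in $Y$, with $c^{(k)}=(c_n^{(k)})_{n\in\mathbb{N}}$. From the estimate $\|c_n v_n\|\le \|S_n\|+\|S_{n-1}\|\le 2\|(c_n)\|_Y$ and $v_n\neq 0$ one gets that for each fixed $n$ the scalar sequence $(c_n^{(k)})_k$ is Cauchy in the field; call its limit $c_n$. Set $c=(c_n)$. The remaining work is to show $c\in Y$ and $c^{(k)}\to c$ in $\|\cdot\|_Y$. A standard $\varepsilon/3$ argument does this: given $\varepsilon>0$, pick $K$ so that $\|c^{(k)}-c^{(m)}\|_Y<\varepsilon$ for $k,m\ge K$, i.e.\ $\sup_N\|\sum_{n=1}^N(c_n^{(k)}-c_n^{(m)})v_n\|<\varepsilon$; letting $m\to\infty$ (the sum is finite for each fixed $N$) yields $\sup_N\|\sum_{n=1}^N(c_n^{(k)}-c_n)v_n\|\le\varepsilon$ for $k\ge K$. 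This already shows $c^{(k)}-c\in Y$, hence $c=c^{(k)}-(c^{(k)}-c)\in Y$, and simultaneously gives $\|c^{(k)}-c\|_Y\le\varepsilon$, i.e.\ convergence in $Y$. To see that $\sum_n c_n v_n$ actually converges (not just that its partial sums are bounded) one checks the partial sums are Cauchy in $X$ by splitting $\|\sum_{n=M}^{N} c_n v_n\|\le \|\sum_{n=M}^{N}(c_n-c_n^{(K)})v_n\| + \|\sum_{n=M}^{N}c_n^{(K)}v_n\|$, bounding the first term by $2\varepsilon$ via the estimate just obtained and the second by the convergence of $\sum_n c_n^{(K)}v_n$.

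For part (2), assume $\{v_n\}$ is a basis of $X$ and let $T:Y\to X$ be $T(c)=\sum_n c_n v_n$. Linearity is clear, and $T$ is bounded because $\|T(c)\|=\lim_N\|S_N\|\le\sup_N\|S_N\|=\|c\|_Y$. Surjectivity is exactly the basis property: every $x\in X$ has an expansion $x=\sum_n v_n^*(x)v_n$, so $x=T((v_n^*(x))_n)$ with $(v_n^*(x))_n\in Y$. Injectivity follows from uniqueness of basis expansions: if $T(c)=0$ then $\sum_n c_n v_n = 0 = \sum_n 0\cdot v_n$, forcing $c_n=0$ for all $n$. Since $Y$ and $X$ are Banach spaces (part (1)) and $T$ is a bounded linear bijection, the Open Mapping Theorem gives that $T^{-1}$ is bounded, so $T$ is a topological isomorphism.

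The only genuinely delicate point is the completeness argument in part (1): one must be careful that the "letting $m\to\infty$" step is legitimate (it is, because for fixed $N$ the expression $\|\sum_{n=1}^N(c_n^{(k)}-c_n^{(m)})v_n\|$ is a continuous function of the finitely many scalars $c_1^{(m)},\dots,c_N^{(m)}$), and that one extracts both $c\in Y$ and the $Y$-convergence from the same uniform-in-$N$ bound. Everything else is bookkeeping. I expect the main obstacle to be purely expository — organizing the $\varepsilon/3$ splittings so that the boundedness of partial sums, their Cauchyness in $X$, and the norm convergence in $Y$ all fall out cleanly — rather than any real mathematical difficulty.
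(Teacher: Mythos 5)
The paper does not prove this proposition at all: it is imported verbatim, with citation, from \cite{MR0298399}, so there is no in-paper argument to compare yours against. Your proof is correct and is the standard one — the only point worth tightening is the ordering in the completeness step, where you assert $c^{(k)}-c\in Y$ immediately after obtaining the uniform-in-$N$ bound, even though membership in $Y$ requires convergence of the series and not mere boundedness of its partial sums; since you supply the Cauchy-in-$X$ argument for the partial sums right afterwards, this is purely expository and the proof is complete.
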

 
  For any infinite subset $J=\{ j_1,j_2,\dots \}\subset \mathbb{N}$ let us denote
 \begin{equation}\label{def:Y(J)}
     Y(J)=\{ (c_i)_{i\in J}\mid \sum_{i \in J} c_i v_i\quad \text{converges in } X\}.
 \end{equation}
 with associated norm 
 \begin{equation}\label{normY2}
         \|(c_i)\|_{Y(J)}:= \sup_{N} \| \sum_{i=1}^N c_{j_i} v_i \|
 \end{equation}
and $\ell_2(J):=\{ (c_j)_{j\in J} \mid \sum_{i\in J} (c_j)^2 < +\infty  \}$ with associated norm
\begin{equation*}
    \| (c_j)_{j\in J} \|_{\ell_2(J)}= \sum_{i\in J} (c_j)^2.
\end{equation*}

%  \begin{remark}
%  Observe that if $X$ is a reflexive Banach space with  basis $(v_i)_{i\in \mathbb{N}}$, then by Proposition 1.b.2 of \cite{MR0500056} and James Theorem (Theorem \ref{theorem:James}),  $\spanc(v_i,i\in J)$ can be identified with $Y(J)$.
%  \end{remark}
 
 \begin{remark}
 \label{coefficient:Hilbert}
 Observe that if $X$ is a Hilbert space with the inner product $\langle \cdot\ |\ \cdot\rangle$, the norm $\|x\|=\sqrt{\langle x\ |\ x\rangle}$ and  the orthonormal basis $(x_{i})_{i\in J}$, then
 $$
 \|(c_{i})\|_{Y(J)}\begin{array}[t]{l}
 =\sup_{N}\sqrt{\langle \sum_{i=1}^N c_{j_i} x_i\ |\ \sum_{i=1}^N c_{j_i} x_i\rangle}\\
 =\sup_{N}\sqrt{\sum_{i=1}^N\sum_{k=1}^N\langle  c_{j_i} x_i\ |\ c_{j_k} x_k\rangle}\\
 =\sup_{N}\sqrt{\sum_{i=1}^N (c_{j_i})^{2}}=\sqrt{\sup_{N}\sum_{i=1}^N (c_{j_i})^{2}} \\
 =\sqrt{\sum_{i\in J} (c_{i})^{2}}=\|(c_{i})_{i\in J}\|_{\ell_{2}(J)}. 
 \end{array}
 $$
 By James Theorem (Theorem \ref{theorem:James}), $(x_{i})_{i\in J}$  is boundedly-complete in $X$. 
 In view of this, when $(x_{i})_{i\in J}$ is an orthonormal% and boundedly-complete base in $X$
 , then 
 $Y(J)=\ell_{2}(J):=\spanc(e_{i},\ i\in J)$, where $e_{i}^{j}=0$ if $j=i$ and $0$ otherwise, for all $i,j\in J$.
 \end{remark}
 
 \section{Constant rank condition}\label{section:CRC}
 
 In this section we introduce the Constant Rank Condition (CRC)\footnote{Let us note that the same terminology (Constant Rank Condition) in finite-dimensional case has been already used in  \cite{MR2679662} (Definition 1) and \cite{directional_derivative_Janin}, and differs from that proposed in  Definition \ref{definition:crc}.} for a possibly infinite family of functions defined on a Banach space via Schauder basis. For other forms of CRC which do not refer to Schauder basis see e.g. \cite{MR4104521}.

Let $E$ be a Banach space and $F$ a Hilbert space with basis $(b_{i})_{i\in\mathbb{N}}$. Consider $f:E\rightarrow\spanc(b_{i},\ i\in \mathbb{N})$, i.e. $f(x)=\sum_{i\in \mathbb{N}}f_{i}(x)b_{i}$, where $f_{i}=b_{i}^{*}(f(x)):E\rightarrow\mathbb{R}$, $i\in \mathbb{N}$ are continuous functionals.

  \begin{definition}
  \label{definition:crc}
Let  $(f_{i})_{i\in J_1}:\ E \rightarrow Y(J_1)$, $J_1\subset \mathbb{N}$ be of class $C^1$. We say that  the Constant Rank Condition (CRC in short) holds for  $(f_{i})_{i\in J_1}$ at $x_0\in E$ 
if there exist
%$J$, $I_0\subset J\subset I_0 \cup I_1$ and 
a neighbourhood $V(x_0)$ and a subset $J_2\subset J_1$ such that

\begin{enumerate}[label=\emph{\arabic*.}]
\item \label{item:CRC:1} $(Df_i(x))_{i\in J_2}$ 
forms a Schauder basis for 
 $\spanc (D f_{i}(x), i \in J_1)$ for all $x\in V(x_0)$,
% \item \label{item:CRC:2} $(Df_i(x_0))_{i\in J_2}(E)$ is closed in $Y(J_2)$ defined in \eqref{def:Y(J)},
 %=\{ (c_i)_{i\in J_2} \mid \sum_{i\in J_2} c_i  b_i \quad \text{converges in } F\} $ with norm given by \eqref{normY} 

 \item \label{item:CRC:3} for any $x\in V(x_0)$, there exists a topological isomorphism (linear)
	\begin{equation}\label{isomorhpism_zx}
	    z_x: \spanc ( D f_{i}(x), i \in J_2) \rightarrow \spanc (D f_{i}(x_0), i \in J_2), %\mbox{ for all } x\in V(x_0)
	\end{equation}
	such that $z_x(D f_{i}(x))= D f_{i}(x_0)$, $i \in J_2$.
	\end{enumerate}
	
\end{definition}

We interpret $z_x$ as an isomorphism of functionals, i.e., $z_x(Df_i(x)(\cdot))=Df_i(x_0)(\cdot)$, $x\in V(x_0)$, $i\in J_2$, hence, by \eqref{isomorhpism_zx}, $(Df_i(x_0))_{i\in J_2}(E)$ and $(Df_i(x))_{i\in J_2}(E)$ are isomorphic for all $x\in V(x_{0})$.

 \begin{remark}
Let us note that, for $(f_{i})_{i\in J_1}$, $J_1\subset \mathbb{N}$, where $J_1$ is finite, 
the condition
\begin{equation*}
    \rank \{ Df_i(x_0), i\in J_1 \} = \rank \{ Df_i(x), i\in J_1 \}, \quad x\in V(x_0),
\end{equation*}
where $V(x_0)$ is a neighbourhood of $x_0$, 
is equivalent to the existence of isomorphism $z_x$ given in Definition \ref{definition:crc} for $(f_{i})_{i\in J_1}$ at $x_0$ (see \cite[Definition 2.1]{MR4223867}).

\end{remark}

\begin{definition}(\cite[Definition 1.3.1]{MR2192298})
Two bases (or basic sequences) $(u_n)_{n\in \mathbb{N}}$ and $(v_n)_{n\in \mathbb{N}}$ in the respective real Banach spaces $X$ and $Y$ are said to be \textit{equivalent}, if whenever we take a sequence of scalars $(a_n)_{n\in \mathbb{N}}$, then $\sum_{n\in \mathbb{N}}a_n u_n$ converges if and only if $\sum_{n\in \mathbb{N}}a_n v_n$ converges.
\end{definition}

 Let us recall the following Fact.
\begin{fact} (\cite[Theorem 1.3.2]{MR2192298})
%[Albiac, Kalton, Topics in Banach Space Theory, Theorem 1.3.2]
Let $(u_i)_{i\in J}$, $J\subset \mathbb{N}$, be a basic sequence in a Banach space $X$ and let $(v_i)_{i\in J}$ be a basic sequence in a Banach space $Y$. The following are equivalent:
\begin{itemize}
    \item $(u_i)_{i\in J}$ is a basic sequence equivalent to $(v_i)_{i\in J},$
    \item There is an isomorphism $T$ of $\spanc (u_i )_{i\in J}$ onto $\spanc (v_i)_{i\in J}$ s.t. $T(u_i)=v_i$ for every $i\in J$.
\end{itemize}
\end{fact}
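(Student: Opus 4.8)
The plan is to prove the two implications separately. The implication ``isomorphism $\Rightarrow$ equivalence'' is immediate; the converse is the substantive one and I would obtain it from the sequence-space description of basic sequences in Proposition \ref{proposition:Y_Banach} together with the Closed Graph Theorem.

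\emph{Isomorphism $\Rightarrow$ equivalence.} Suppose $T\colon \spanc(u_i,\ i\in J)\to\spanc(v_i,\ i\in J)$ is an isomorphism with $T(u_i)=v_i$ for all $i\in J$; enumerate $J=\{j_1,j_2,\dots\}$. Fix scalars $(a_i)_{i\in J}$ and write $s_N=\sum_{k=1}^N a_{j_k}u_{j_k}$. If $\sum_{i\in J}a_iu_i$ converges, then by linearity $T(s_N)=\sum_{k=1}^N a_{j_k}v_{j_k}$, and continuity of $T$ forces this to converge; conversely, applying the bounded operator $T^{-1}$ to the partial sums of $\sum a_iv_i$ shows $\sum a_iu_i$ converges. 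Hence the two basic sequences are equivalent.

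\emph{Equivalence $\Rightarrow$ isomorphism.} I would apply Proposition \ref{proposition:Y_Banach} with $X=\spanc(u_i,\ i\in J)$, whose basis is $(u_i)_{i\in J}$: the space $Y_u:=\{(a_i)_{i\in J}\mid \sum_{i\in J}a_iu_i \text{ converges in }X\}$, normed by $\|(a_i)\|_{Y_u}=\sup_N\|\sum_{k\le N}a_{j_k}u_{j_k}\|$, is a Banach space, and the summation map $\Phi_u\colon Y_u\to\spanc(u_i,\ i\in J)$, $(a_i)\mapsto\sum a_iu_i$, is a topological isomorphism; define $Y_v$ and $\Phi_v$ analogously. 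Equivalence of the bases says precisely that a sequence $(a_i)_{i\in J}$ lies in $Y_u$ if and only if it lies in $Y_v$, so $Y_u$ and $Y_v$ coincide as sets and the identity map $\iota\colon Y_u\to Y_v$ is a well-defined linear bijection. To see $\iota$ is bounded I would invoke the Closed Graph Theorem: for each $j\in J$ the coordinate functional $(a_i)\mapsto a_j$ is continuous on $Y_u$ (it equals $u_j^*\circ\Phi_u$ with $u_j^*$ the bounded biorthogonal functional of Proposition \ref{fact:existence_dial_basis}), and likewise on $Y_v$; so if $(a^{(n)})\to(a)$ in $Y_u$ and $(a^{(n)})\to(b)$ in $Y_v$, then $a^{(n)}_j\to a_j$ and $a^{(n)}_j\to b_j$ for every $j$, whence $(a)=(b)$ and the graph of $\iota$ is closed. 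By symmetry $\iota^{-1}$ is bounded, so $\iota$ is a topological isomorphism. Finally set $T:=\Phi_v\circ\iota\circ\Phi_u^{-1}\colon\spanc(u_i,\ i\in J)\to\spanc(v_i,\ i\in J)$, a composition of isomorphisms; since $\Phi_u^{-1}(u_j)$ is the unit sequence $e_j$ and $\iota(e_j)=e_j$, we get $T(u_j)=\Phi_v(e_j)=v_j$ for every $j\in J$, as required.

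The step I expect to be the crux is the boundedness of $\iota$: the equivalence hypothesis yields only the set-theoretic identification $Y_u=Y_v$, and upgrading it to a bicontinuous one is exactly what the Closed Graph Theorem delivers, relying on the completeness of $Y_u$ and $Y_v$ furnished by Proposition \ref{proposition:Y_Banach} and on the continuity of the coordinate functionals. Everything else is bookkeeping with the summation isomorphisms $\Phi_u,\Phi_v$.
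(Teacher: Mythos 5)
Your proposal is correct and follows the standard argument for this result (the one in the cited reference \cite[Theorem 1.3.2]{MR2192298}), which the paper itself quotes without proof: both directions are handled exactly as you do, with the substantive implication obtained by identifying the two coefficient spaces $Y_u=Y_v$ and applying the Closed Graph Theorem to the identity map, using the completeness from Proposition \ref{proposition:Y_Banach} and the continuity of the coordinate functionals. The construction $T=\Phi_v\circ\iota\circ\Phi_u^{-1}$ and the verification $T(u_j)=v_j$ are complete and correct.
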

In other words, condition \eqref{item:CRC:3} of CRC means that a basic sequence $(Dg_i(x))_{i\in J_2}$ is equivalent to a basic sequence $(Dg_i(x_0))_{i\in J_2}$  in $E^{*}$ for all $x$ from some neighbourhood $V(x_0)$.

In the sequel we will use CRC with some additional conditions, e.g. in Definition \eqref{rcrcq_condition}, which motivates the following definition. 

\begin{definition}
  \label{definition:crcplus}
Let  $(f_{i})_{i\in J_1}:\ E \rightarrow Y(J_1)$, $J_1\subset \mathbb{N}$ be of class $C^1$. We say that  the Constant Rank Condition Plus (CRC+ in short) holds for  $(f_{i})_{i\in J_1}$ at $x_0\in E$ 
if there exist
%$J$, $I_0\subset J\subset I_0 \cup I_1$ and 
a neighbourhood $V(x_0)$ and a subset $J_2\subset J_1$ such that

\begin{enumerate}[label=\emph{\arabic*.}]
\item \label{item:CRC:1plus} $(Df_i(x))_{i\in J_2}$ 
forms a Schauder basis for 
 $\spanc (D f_{i}(x), i \in J_1)$ for  $x\in V(x_0)$,
% \item \label{item:CRC:2plus} $(Df_i(x_0))_{i\in J_2}(E)$ is closed in $Y(J_2)$ defined in \eqref{def:Y(J)},
 %=\{ (c_i)_{i\in J_2} \mid \sum_{i\in J_2} c_i  b_i \quad \text{converges in } F\} $ with norm given by \eqref{normY} 
  \item \label{item:CRC:3plus} for all $x\in V(x_0)$, there exists a topological isomorphism
  	\begin{equation*}
	    z_x: \spanc ( D f_{i}(x), i \in J_2) \rightarrow \spanc (D f_{i}(x_0), i \in J_2), \mbox{ for all } x\in V(x_0)
	\end{equation*}
	such that $z_x(D f_{i}(x))= D f_{i}(x_0)$, $i \in J_2$
\end{enumerate}
and additionally
\begin{enumerate}[label=\emph{\arabic*.}]
\setcounter{enumi}{2}
 \item \label{item:CRC:2plus} $(Df_i(x_0))_{i\in J_2}(E)$ is closed in $Y(J_2)$ defined in \eqref{def:Y(J)} with $v_i=b_i$, $i\in J_2$,
    \item\label{assumption:Aplus}
 $(Df_i(x_0))_{i\in J_2}$ forms a basis which is shrinking and boundedly-complete for $\spanc(Df_i(x_0),\ i\in J_1)$, equivalently  $\spanc(Df_i(x_0),\ i\in J_1)$ is reflexive and $(Df_i(x_0))_{i\in J_2}$ forms a basis of this space,
    \item \label{assumption:Bplus} $(Df_i(x_0))_{i\in J_2}$ is Besselian for $\spanc(Df_i(x_0),\ i\in J_1)$ 
    \item \label{assumption:Cplus} $Df_i(x_0)^*\in E$, $i\in J_2$.
	\end{enumerate}
	
\end{definition}

In view of Remark \ref{coefficient:Hilbert}, if $(b_{i})_{i\in J_{2}}$ is orthonormal, then, in Definition \ref{definition:crcplus}, \ref{item:CRC:2plus} we have $Y(J_{2})=\ell_{2}(J_{2})$.

\begin{remark}
Let us remark that, if $J_1$ is finite then \ref{item:CRC:2plus},  \ref{assumption:Aplus}, \ref{assumption:Bplus}, \ref{assumption:Cplus} are automatically satisfied.
\end{remark}
 
%  \begin{example}
%  Let $Y=\mathbb{R}^n$, each $k\in K$ is represented as $\sum_{i\in I} \alpha_i b_i$ for some $ \alpha_i\le 0$. Let us consider the set 
%  	$$
% 	\mathcal{ F}=\{x\in X\ | \ g_{i}(x)\le 0, \ i=1,\ldots, n\},
% 	$$
% 	where $g_i(x)=x$, $i=1,\dots,n$.
% Clearly, taking $K=\mathbb{R}_-^n$ we have that $b_i=e_i$, $i=1,\dots,n$, where
% \begin{equation}
%     e_i=(0,\dots,0,\underbrace{1}_{i\text{-th component}},0,\dots,0)\quad i=1,\dots,n,
% \end{equation} we can rewrite $\mathcal{ F}$ as
% \begin{equation*}
%     \mathcal{ F}= \{ x\in \mathbb{R}^n \mid g(x)\in K \}, 
% \end{equation*}
% where $g(x)=(g_1(x),\dots,g_n(x))$. Take $x_0\in \mathcal{ F}$, $x_0=(0,-1,-1,\dots,-1)$. Then $I(x_0)=I(x_0)=\{1\}$ .
%   \end{example}
%   {\color{red}
%  Let us observe that the set $I(x)$ depends upon the choice of bi-orthogonal system, whereas Definition \ref{definition_rcrcq} does not depend upon the choice of the bi-orthogonal system?.
%  }
 
 In the definition of CRC+ we assume closedness of $((Df_i(x_0))_{i\in J_2})(E)$. Let us recall that such sets do not have to be closed as shown in the example below.
\begin{example} (\cite{MR771117})%Booss Bleecker, Exercise 3 page 14) 
Let $G:\ \ell_2 \rightarrow \ell_2$ be defined as $G(x)=(\frac{1}{2i}x_i^2)_{i\in \mathbb{N}}$. Then
$DG(\cdot ):=\sum_{n=1}^{\infty} \frac{1}{n} e_n \langle e_n \mid \cdot \rangle$ and
\begin{enumerate}
    \item $DG(x)\in L(\ell_2,\ell_2)$ (with $\|DG(x)\|=1$)
    \item $v\in  DG(x)(\ell_2)$ if and only if $\sum_{n\in \mathbb{N}} n^2 |\langle e_n \mid v \rangle|^2<+\infty$
    \item With $v_0:=\sum_{n\in \mathbb{N}} \frac{1}{n^{\frac{3}{2}}}e_n$ and $\{v_j\}:=\sum_{n\in \mathbb{N}} \frac{1}{n^{\frac{3}{2}+\frac{1}{j}}}e_n$ we have $v_0\notin DG(x)(\ell_2)$ yet $v_j\in  DG(x)(\ell_2)$ and $v_j\rightarrow v_0$ as $j\rightarrow +\infty$.

\end{enumerate}
     
\end{example}

\section{Complemented kernels and isomorphisms}\label{section:Complemented_and_isomorphism}

 In this section we prove  Proposition \ref{proposition:E1_representation} and Proposition \ref{proposition:CRC_and_isomorphism} which, together with CRC+ will be used in the next sections.

We start with Proposition \ref{proposition:E1_representation} providing conditions for a $C^{1}$ mapping $f$   ensuring that the kernel of its derivative $Df(x_{0})$ is complemented. It is next used in the proof of Lemma \ref{lemma:basis2} which is, in turn, used in the proof of Proposition \ref{proposition:CRC_and_isomorphism}.  Proposition \ref{proposition:E1_representation} could be of independent interest of itself.

\begin{proposition}\label{proposition:E1_representation}
Let $E$ be a Banach space, $F$ be Banach space with a Besselian Schauder basis $(b_i)_{i\in \mathbb{N}}$. Let  
%$f:E\rightarrow \spanc(b_{i},\ i\in J)\subset F$ be  a $C^{1}$ mapping
$(f_{i})_{i\in J}:\ E \rightarrow Y(J)$, $J\subset \mathbb{N}$ be of class $C^1$, $f:\ E \rightarrow F$, $f(x):=\sum_{i\in J} f_i(x)b_i  $, $x\in E$.  
%Let $f_i(x):=b_i^*(f(x))$, $x\in E$, $i\in J$. 
Let $x_0\in E$ and  $E_2=\ker Df(x_0)$, $X_1:=\spanc(Df_i(x_0),\ {i\in J})$, and assume that:
\begin{enumerate}[label=\emph{(\Alph*)}]
%\item $(b_i)_{i\in \mathbb{N}}$ is Besselian for $F$,
    \item\label{assumption:A}
 $(Df_i(x_0))_{i\in J}$ forms a shrinking and boundedly-complete basis for $X_1$, 
    \item \label{assumption:B} $(Df_i(x_0))_{i\in J}$ is Besselian for $X_1$,
    %$(Df_i(x_0)^*)_{i\in J}$ is Hilbertian for $X_1^*$,
    \item \label{assumption:C} $Df_i(x_0)^*\in E$, $i\in J$.
\end{enumerate}
Then $E=E_1 \oplus E_2$, where $E_1= \spanc (Df_i(x_0)^*,\ {i\in J}).$ Moreover, $X_1$ is a reflexive space.
\end{proposition}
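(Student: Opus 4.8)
The plan is to build the complementary subspace $E_1$ explicitly from the ``adjoint vectors'' $Df_i(x_0)^* \in E$ (which exist by assumption \ref{assumption:C}), and to show that $E_1 := \spanc(Df_i(x_0)^*,\ i\in J)$ and $E_2 := \ker Df(x_0)$ together split $E$. First I would record the last assertion, which is the cheapest: by \ref{assumption:A} the basis $(Df_i(x_0))_{i\in J}$ of $X_1$ is simultaneously shrinking and boundedly-complete, so James's Theorem (Theorem \ref{theorem:James}) applies and $X_1$ is reflexive.

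The core of the argument is the splitting. The natural candidate for the projection of $E$ onto $E_1$ along $E_2$ is the map $P$ that sends $x \in E$ to an element of $E_1$ whose ``coordinates'' against the functionals $Df_i(x_0)$ match those of $x$; concretely one wants $P x = \sum_{i\in J} \alpha_i\, Df_i(x_0)^*$ with $\alpha_i$ chosen so that $Df_j(x_0)(P x) = Df_j(x_0)(x)$ for all $j\in J$. Here the Besselian hypothesis \ref{assumption:B} enters decisively: for fixed $x$, the scalar sequence $\big(Df_i(x_0)(x)\big)_{i\in J}$ is the coordinate sequence (against the coefficient functionals of the basis $(Df_i(x_0))$ of $X_1$) of the element $Df(x_0)(x) \in X_1$, so by Lemma \ref{lemma:besseliantohilbertian} the dual basic sequence $(Df_i(x_0)^*)$-side behaves Hilbertianly and $\sum_i Df_i(x_0)(x)^2 < \infty$; combined with \ref{assumption:C} and the Besselian/Hilbertian duality this should guarantee that $\sum_{i\in J} Df_i(x_0)(x)\, Df_i(x_0)^*$ converges in $E$, defining $P x \in E_1$. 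One then checks: (i) $P$ is linear and bounded (uniform boundedness / closed graph, using that $Df(x_0)$ is bounded and the series estimates are uniform on bounded sets); (ii) $P^2 = P$, because applying $Df_j(x_0)$ to $P x$ reproduces $Df_j(x_0)(x)$ by the biorthogonality-type relation forced on the $Df_i(x_0)^*$ (this is exactly where one must verify that $Df_j(x_0)(Df_i(x_0)^*)$ behaves like the Gram/identity pattern needed — the shrinking hypothesis guarantees the coefficient functionals are represented by the $Df_i(x_0)$ themselves, closing the loop); (iii) $\ker P = E_2$, since $Df(x_0)(x) = 0$ iff all coordinates $Df_i(x_0)(x)$ vanish (the $(Df_i(x_0))$ being a basis of $X_1$) iff $Px = 0$; and (iv) $\operatorname{ran} P = E_1$, by density of the finite sums and closedness of $E_1$. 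Then $P$ is a bounded idempotent with kernel $E_2$ and range $E_1$, hence $E = E_1 \oplus E_2$ topologically, and both summands are closed.

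I would organize this as: (1) reflexivity of $X_1$ via James; (2) construction of the series defining $P x$ and proof of its convergence in $E$ (invoking \ref{assumption:B}, \ref{assumption:C}, Lemma \ref{lemma:besseliantohilbertian}, and Proposition \ref{proposition:Y_Banach} to control norms); (3) boundedness of $P$; (4) idempotency, using that $(Df_i(x_0))_{i\in J}$ shrinking means its coefficient functionals on $X_1$ are exactly the canonical images, so that $Df_j(x_0)\big(\sum_i c_i\, Df_i(x_0)^*\big) = c_j$; (5) identification of $\ker P$ and $\operatorname{ran} P$; (6) conclude the topological direct sum.

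\textbf{Main obstacle.} The delicate point is step (2) together with the ``biorthogonality'' identity $Df_j(x_0)(Df_i(x_0)^*) = \delta_{ij}$ (or at least an invertible Gram-type pattern) underlying steps (2) and (4): a priori $Df_i(x_0)^*\in E$ is just \emph{some} vector representing the functional, and one must use the full strength of \ref{assumption:A}–\ref{assumption:C} (reflexivity/shrinking to pass between $X_1$ and $X_1^*$, Besselian on $X_1$ which by Lemma \ref{lemma:besseliantohilbertian} makes the dual sequence Hilbertian) to see that these representing vectors assemble into a genuine biorthogonal-type system whose associated series converge in the norm of $E$. Establishing convergence of $\sum_{i\in J} Df_i(x_0)(x)\, Df_i(x_0)^*$ in $E$ — rather than merely weakly or in some weaker sense — and the uniformity of the bound is where the real work lies; everything after that is bookkeeping with bounded idempotents.
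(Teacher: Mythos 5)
Your plan takes the same route as the paper: the complement is built from the series $m=\sum_{i\in J} Df_i(x_0)(x)\,Df_i(x_0)^*$, whose convergence is the crux, and one checks $x-m\in\ker Df(x_0)$; your additional steps (3)--(6) packaging this as a bounded idempotent $P$ with $\ker P=E_2$ and $\operatorname{ran}P=E_1$ only make explicit what the paper leaves implicit (the paper never explicitly verifies $E_1\cap E_2=\{0\}$ or continuity of the projection, so this is if anything more careful). One correction to your step (2), which is where your justification goes astray: the square-summability $\sum_{i\in J}\bigl(Df_i(x_0)(x)\bigr)^2<\infty$ is \emph{not} obtained by reading $(Df_i(x_0)(x))_{i\in J}$ as the coordinate sequence of $Df(x_0)(x)$ ``in $X_1$'' --- the element $Df(x_0)(x)$ lives in $F$, not in $X_1\subset E^*$, and there is no element of $X_1$ whose coordinates these are. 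The correct source is the Besselianness of the basis $(b_i)_{i\in\mathbb{N}}$ of $F$ applied to the convergent expansion $Df(x_0)(x)=\sum_{i\in J}Df_i(x_0)(x)\,b_i$; only then does hypothesis \ref{assumption:B} via Lemma \ref{lemma:besseliantohilbertian} (making $(Df_i(x_0)^*)_{i\in J}$ Hilbertian in $X_1^*$) convert that square-summability into norm convergence of $m$. Also, the ``delicate'' biorthogonality $Df_j(x_0)(Df_i(x_0)^*)=\delta_{ij}$ you flag resolves immediately: $Df_i(x_0)^*$ is by definition the $i$-th biorthogonal functional of the basis $(Df_j(x_0))_{j\in J}$ of $X_1$ (Proposition \ref{fact:existence_dial_basis}), and \ref{assumption:C} merely asserts that it is represented by a vector of $E$, under which the canonical pairing returns the same value $\delta_{ij}$. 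With these two repairs your outline coincides with the paper's proof.
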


\begin{proof} By \ref{assumption:A}, the fact that $X_1$ is a reflexive space follows immediately from James theorem (Theorem \ref{theorem:James}). Let  $v_{i}:=D f_i(x_0)\in E^*$, $i\in J$.
Since $(Df_i(x_0))_{i\in J}$ forms a boundedly-complete basis for $X_1:=\spanc (Df_i(x_0),\ {i\in J})$, by Proposition \ref{propozycja-izom},   there exists canonical isomorphism $\operatorname{eval}_{X_1}:\ X_1\rightarrow \spanc ( (Df_i(x_0)^*,\ {i\in J})^*$  defined as
\begin{equation*}
    %eval_{X_1}(v)=eval_v\quad \text{for all} \ v\in X_1,
    \operatorname{eval}_{X_1}(v)(u^*)= u^* (v) \quad \text{for every} \   v\in X_1,\ u^*\in \spanc(v_n^*).
\end{equation*}
%and we have
% Since $\hilbertH$ is  reflexive, 
We have $v_{i}^{*}(v_{j})=v_{j}(v_{i}^{*})=Df_i(x_0)Df_{j}(x_{0})^{*}$ for all $i\in J$. 
By \eqref{delta_1}, \ref{assumption:C}, Fact \ref{fact:representation_bstar} and Proposition \ref{propozycja-izom}
\begin{align} 
\begin{aligned}\label{delta_2ss}
	&Df(x_0)(Df_i(x_0)^*)=
	\sum_{j\in J} Df_j(x_0)(Df_i(x_0)^*)b_j=b_i
	%(Df_j(x_{0}))_{j\in J}(Df_{i}(x_{0})^{*})=v_{i},\\ &v_{i}=%[0,\dots,\underbrace{1}_{i},0,\dots]
	,\ i\in J.
	\end{aligned}
	\end{align}
%	Now we show that $E_1= [(Dg_i(x_0))^*_{i\in I_1}]$. From our assumption   
	
		Now we show that $E=X_{1}^{*}\oplus \text{ker }Df(x_{0})$. For any  $x\in E$,  $Df(x_{0})(x)=
		\sum_{j\in J} Df_j(x_0)(x)b_j=\sum_{i\in J} \alpha_i(x) b_i$, $\alpha_i(x)=Df_i(x_0)(x)\in \mathbb{R}$. %, where $Df(x_{0})=(Df_{i}(x_{0}))_{i\in J}$.
		%, and there exist $\alpha_{i}(x)\in \mathbb{R}$, $i\in I_1$  such that $Df(x_{0})(x)=\sum_{i=1}^{\kappa}\alpha_{i}(x)v_{i}$.
		Since $(b_i)_{i\in \mathbb{N}}$ is Besselian for $F$, $\sum_{i\in J} (\alpha_i(x))^2<+\infty$. 
	Let
	$m:=\sum_{i\in J}\alpha_i(x) Df_{i}(x_{0})^{*}$. By \ref{assumption:B} and Lemma \ref{lemma:besseliantohilbertian}, $(Df_i(x_0)^*)_{i\in J}$ is Hilbertian for $X_1^*$, $m$ is well defined, i.e., $m\in X_{1}^{*}$.

	By \eqref{delta_2ss}, we have
	\begin{align*}
	    	& Df(x_{0})(x-m)=Df(x_{0})(x)-Df(x_{0})(m)\\
	    	&= \sum_{i\in J}\alpha_{i}(x)b_{i}-  Df(x_0)(\alpha_i(x)\sum_{i\in J} Df_i(x_0)^*) \\
	    	& =\sum_{i\in J}\alpha_{i}(x)b_{i}-\sum_{i\in J}\alpha_i(x)Df(x_{0})(Df_{i}(x_{0})^{*})\\
	    	&=\sum_{i\in J}\alpha_{i}(x)b_{i}-\sum_{i\in J}\alpha_i(x)b_i=0.
	\end{align*}
	This shows that $x-m\in\text{ker\,} Df(x_{0})$  which proves the assertion with $E_{1}:=X_{1}^{*}$ and $E_{2}:=\text{ker\,} Df(x_{0}).$ 
% 	Therefore basis  $(Df_i(x_0))_{i\in J}$ is shrinking for $X_1$ and from James Theorem we are getting that $X_1$ is reflexive.

\end{proof}

% \begin{remark}
% There exist Banach spaces $Y$ with shrinking basis and no reflexive subspaces, such that the dual $Y^*$ is reflexive (see \cite{MR2060194}). 
% %This motivates us to assume that $E$ is a reflexive space, not only that $X_1$ is reflexive.
% \end{remark}
\begin{remark}
Let us note that space $\ell_1$  contains no infinite-dimensional reflexive subspaces.
\end{remark}

\begin{corollary}
 Let $E$ be a reflexive Banach space, $F$ be Hilbert space with a Besselian Schauder basis $(b_i)_{i\in \mathbb{N}}$. Let  
%$f:E\rightarrow \spanc(b_{i},\ i\in J)\subset F$ be  a $C^{1}$ mapping
$(f_{i})_{i\in J}:\ E \rightarrow Y(J)$, $J\subset \mathbb{N}$ be of class $C^1$, $f:\ E \rightarrow F$, $f(x):=\sum_{i\in J} f_i(x)b_i  $, $x\in E$.  
Let $x_0\in E$ and  $E_2=\ker Df(x_0)$, $X_1:=\spanc(Df_i(x_0),\ {i\in J})$ and assume that:
\begin{enumerate}[label=\emph{(\Alph*)}]
%\item $(b_i)_{i\in \mathbb{N}}$ is Besselian for $F$,
    \item 
    $(Df_i(x_0))_{i\in J}$ is a basis for $X_1$, 
    \item $(Df_i(x_0))_{i\in J}$ is Besselian for $X_1$.
\end{enumerate}
Then $E=E_1 \oplus E_2$, where $E_1=\spanc (Df_i(x_0)^*,\ {i\in J}).$ %Moreover, $X_1$ is a reflexive space.
\end{corollary}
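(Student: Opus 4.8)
The corollary is a direct specialization of Proposition \ref{proposition:E1_representation}, so the plan is to verify that the hypotheses \ref{assumption:A}, \ref{assumption:B}, \ref{assumption:C} of that proposition follow from the weaker-looking hypotheses stated here, under the extra standing assumptions that $E$ is reflexive and $F$ is a Hilbert space. Once this verification is complete, the conclusion $E = E_1 \oplus E_2$ with $E_1 = \spanc(Df_i(x_0)^*,\ i\in J)$ is exactly the conclusion of Proposition \ref{proposition:E1_representation}.

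\textbf{Step 1 (assumption \ref{assumption:B}).} This is literally hypothesis (B) of the corollary: $(Df_i(x_0))_{i\in J}$ is Besselian for $X_1$, nothing to prove.

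\textbf{Step 2 (assumption \ref{assumption:A}).} We must upgrade ``$(Df_i(x_0))_{i\in J}$ is a basis for $X_1$'' to ``it is a shrinking and boundedly-complete basis for $X_1$''. The key observation is that $X_1 = \spanc(Df_i(x_0),\ i\in J)$ is a closed subspace of the dual space $E^*$; since $E$ is reflexive, $E^*$ is reflexive, and hence $X_1$, being a closed subspace of a reflexive space, is itself reflexive. By James' theorem (Theorem \ref{theorem:James}), any basis of a reflexive Banach space is simultaneously boundedly-complete and shrinking; applying this to the basis $(Df_i(x_0))_{i\in J}$ of $X_1$ gives \ref{assumption:A}.

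\textbf{Step 3 (assumption \ref{assumption:C}).} We need $Df_i(x_0)^* \in E$ for $i \in J$, where $Df_i(x_0)^*$ denotes the element of $X_1^*$ biorthogonal to $Df_i(x_0)$ (viewed through the canonical embeddings). Since $X_1$ is a closed subspace of $E^*$ and $X_1$ is reflexive, its dual $X_1^*$ can be identified, via restriction of functionals, with a quotient of $E^{**} = E$ (using reflexivity of $E$); more concretely, each continuous linear functional on $X_1 \subset E^*$ extends to one on $E^*$ by Hahn--Banach, and by reflexivity of $E$ every such functional on $E^*$ is evaluation at some point of $E$. Hence the biorthogonal functionals $Df_i(x_0)^*$ are realized by elements of $E$, which is \ref{assumption:C}. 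The main obstacle, and the only place requiring care, is this identification: one must make sure the notation $Df_i(x_0)^*$ in the corollary is consistently the same biorthogonal object used in Proposition \ref{proposition:E1_representation}, and that the Hahn--Banach extension together with reflexivity genuinely places it inside $E$ (a concrete choice of extension is not unique, but any one lands in $E$, which is all that is needed). Everything else is a direct invocation of Proposition \ref{proposition:E1_representation}, whose conclusion then yields $E = E_1 \oplus E_2$ with $E_1 = \spanc(Df_i(x_0)^*,\ i\in J)$ and $E_2 = \ker Df(x_0)$.
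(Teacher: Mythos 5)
Your proof is correct and takes essentially the same route as the paper, whose own proof is the single line ``follows from James theorem applied to $X_1$ and Proposition \ref{proposition:E1_representation}.'' You are in fact more careful than the paper: your Step 3, verifying hypothesis \ref{assumption:C} via Hahn--Banach extension and reflexivity of $E$, fills in a detail the paper leaves entirely implicit.
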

\begin{proof}
The proof follows directly from James theorem applied to $X_1$ and Proposition  \ref{proposition:E1_representation}.
\end{proof}
 
\begin{remark}\label{remark:CRC_kernels} Assume that CRC holds for $(f_i)_{i\in J}$ at $x_0$ with  $V(x_{0})$ % and $z_{x}$, $x\in V(x_{0})$ be 
as in Definition \ref{definition:crc}. Then, by CRC \ref{item:CRC:3},
$$
\forall_{k\in J_2}\ \forall _{e\in E}\ z_{x}\circ(b^{*}_{k}(Df(x_{0}))(e)=b^{*}_{k}(Df(x)(e))
$$
and 
\begin{equation*}
     b_l^*(Df(x)(e))=\sum_{i\in J_2} \beta_{i} Df_i(x)(e), \quad l\in J\setminus J_2,\ x\in V(x_0).
\end{equation*}
Hence 
\begin{equation} 
\label{eq:ker}
e\in \ker Df(x)\begin{array}[t]{l}
\Leftrightarrow\ \forall_{k\in J}\ b_{k}^{*}(Df(x)(e))=0\\
\Leftrightarrow\ \forall_{k\in J_2}\  z_{x}\left(b_{k}^{*}(Df(x_{0})(e))\right)=0\\
\Leftrightarrow\ \forall_{k\in J}\ b_{k}^{*}(Df(x_{0})(e))=0\
\Leftrightarrow e\in \ker Df(x_{0}).
\end{array}
\end{equation}

\end{remark}

 For any $f:E\rightarrow F$, and $J_1\subset\mathbb{N}$, we have
$f(x)=\sum_{i\in J_1} b_i^*(f(x))b_i$, $x\in E$, where
$f_{i}(x)=b_i^*(f(x))=0$ for $i\in \mathbb{N}\setminus J_1$. Consequently, for any $e\in E$,
\begin{equation} 
\label{eq_derivative}
Df(x)(e)=\sum_{j\in J_1}b_j^*(Df(x)e)b_i=\sum_{j\in J_1}(Df_{j}(x)e)b_j,
\end{equation}
where, by Fact \ref{fact:representation_bstar}, $Df_{j}(x)e=b_j^*(Df(x)e)$, $j\in J_1$.

In the lemma below we investigate the coefficients
$Df_{j}(x)e=b_j^*(Df(x)e)$, $j\in J_1$, $e\in E$, in a neighbourhood  of $ x_{0}$ at which CRC holds.

\begin{lemma}\label{lemma:basis2}
	Let $E$ be a %reflexive 
	Banach space, $F$ be a Hilbert space and  $(b_i)_{\in \mathbb{N}}$ be a Besselian basis of $F$. Let  
%$f:E\rightarrow \spanc(b_{i},\ i\in J)\subset F$ be  a $C^{1}$ mapping
$(f_{i})_{i\in J}:\ E \rightarrow Y(J)$, $J\subset \mathbb{N}$ be of class $C^1$, $f:\ E \rightarrow F$, $f(x):=\sum_{i\in J} f_i(x)b_i  $, $x\in E$.  
% 	Let $f:\ U \rightarrow \spanc ( b_i,\ i\in J_1)\subset F$, $U\subset E$ open,  be  a $C^{1}$ mapping,  $f(x)=\sum_{i\in J_1} b_i^*(f(x))b_i  $, $J_1\subset \mathbb{N}$, $x\in U$
% 	and let  $f_i:=b_i^*(f)$, $i\in J_1$. 
		%Assume that $E=E_1 \oplus E_2,$ where 
		%Let $E_2=\ker Df(x_0)$.{\color{red} ??} % and  $(Df_i(x_0))_{i\in J_1}$ forms a %boundedly-complete  basis for .$X_1:=[(Df_i(x_0))_{i\in J_1}].$
		Assume that CRC+ holds for $(f_i)_{i\in J_1}$ at $x_0$ with $J_2\subset J_1$ and neighbourhood $V(x_0)\subset U$.
% 		and 
% 		\begin{enumerate}[label=\emph{(\roman*)}]
% 		    \item\label{item:basis2:1} $(Df_i(x_0))_{i\in J_2}$ is shrinking and boundedly-complete basis of  $\spanc(Df_i(x_0),\ i\in J_1)$
% 		    \item\label{item:basis2:2} $(Df_i(x_0))_{i\in J_2}$ is Besselian for $\spanc(Df_i(x_0),\ i\in J_1)$,
% 		    \item\label{item:3} $Df_i(x_0)^*\in E$, $i\in J_2$.
% 		\end{enumerate}

	 Let $x\in V(x_0)$, $e\in E$. %, and 	  $h_i(x,e):=b_i^*(Df(x)e)$, $i\in J_2$.{\color{red} Czy wprowadzanie oznaczenia h jest potrzebne??} 
	 Then  there exist scalars $\beta _{j}=\beta_{j}(e)$, $j
	  \in J_{2}$ depending on $e$ but not on $x$ such that
		\begin{equation}
		\label{coeficients}
		    Df_i(x)(e)=\sum_{j\in J_2}\beta_j(e)w_j^i(x), \quad i\in J_1,
		\end{equation}
		where
		\begin{equation*}
		w_j^i(x):=b_i^*(Df(x) (D f_{j}(x_0)^*))
% 		\sum_{j\in J_1} b_j^*(Df(x) (D f_{i}(x_0)^*))b_j
,\quad i,j\in J_2
		\end{equation*}		and 
		%form a basis in $\spanc\{ b_, j\in J_1 \}$, where 
		$D f_{i}(x_0)^*\in E$, $i\in J_2,$	(see Proposition \ref{fact:existence_dial_basis}) are such that
		\begin{equation}
		\label{eq:zero:one}
		w_{j}^{i}(x_{0})=D f_{i}(x_0)^*(D f_{j}(x_0))=\left\{\begin{array}{ll}
		1 & \text{if } i=j,\\
		0 & \text{if } i\neq j
		\end{array}		\quad i,j \in J_2.
		\right.
		\end{equation}
%{\color{red} This means that for $j\in J_{2}$
%$$
%Df(x) (D f_{j}(x_0)^*))\begin{array}[t]{l}
%=\sum_{i\in J_{2}}b^{*}_{i}(Df(x) (D f_{j}(x_0)^*)))b_{i}+\sum_{i\in %J_{1}\setminus J_{2}}b^{*}_{i}(Df(x) (D f_{j}(x_0)^*)))b_{i}\\
%=b_{j}+\sum_{i\in J_{1}\setminus J_{2}}b^{*}_{i}(Df(x) (D %f_{j}(x_0)^*)))b_{i}\\
%\end{array}
%$$
%}
\end{lemma}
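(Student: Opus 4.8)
The plan is to combine the topological splitting of $E$ provided by Proposition \ref{proposition:E1_representation} with the stability of $\ker Df$ on $V(x_0)$ recorded in Remark \ref{remark:CRC_kernels}. The scalars $\beta_j(e)$, $j\in J_2$, will be the coordinates of the component of $e$ that lies in $E_1:=\spanc(Df_j(x_0)^*,\ j\in J_2)$, written in the basis $(Df_j(x_0)^*)_{j\in J_2}$ of $E_1$; since this component and these coordinates are determined by $e$ and $x_0$ alone, they do not depend on $x$, which is exactly the independence asserted in the statement.

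First I would apply Proposition \ref{proposition:E1_representation} to the subfamily $(f_i)_{i\in J_2}$ (in the role of $(f_i)_{i\in J}$ there, with associated map $\sum_{i\in J_2}f_i(\cdot)b_i$). Its hypotheses \ref{assumption:A}, \ref{assumption:B}, \ref{assumption:C} are exactly conditions \ref{assumption:Aplus}, \ref{assumption:Bplus}, \ref{assumption:Cplus} of CRC+ (Definition \ref{definition:crcplus}); moreover, evaluating condition \ref{item:CRC:1plus} of CRC+ at $x_0$ shows that $\spanc(Df_i(x_0),\ i\in J_2)=\spanc(Df_i(x_0),\ i\in J_1)=:X_1$ and that $\ker\big(D(\sum_{i\in J_2}f_i(\cdot)b_i)(x_0)\big)=\ker Df(x_0)$, since every $Df_l(x_0)$ with $l\in J_1\setminus J_2$ lies in $\spanc(Df_i(x_0),\ i\in J_2)$. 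Hence Proposition \ref{proposition:E1_representation} yields the topological decomposition $E=E_1\oplus E_2$ with $E_1=\spanc(Df_j(x_0)^*,\ j\in J_2)$ and $E_2=\ker Df(x_0)$, where the $Df_j(x_0)^*\in E$, $j\in J_2$, are the biorthogonal functionals of Proposition \ref{fact:existence_dial_basis} identified with elements of $E$ via \ref{assumption:Cplus}.

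Now fix $e\in E$ and decompose $e=e_1+e_2$ along $E=E_1\oplus E_2$, writing $e_1=\sum_{j\in J_2}\beta_j(e)\,Df_j(x_0)^*$ (a series converging in $E$, being the basis expansion of $e_1\in E_1$), where the scalars $\beta_j(e)$ depend only on $e$, and $e_2\in E_2=\ker Df(x_0)$. Fix $x\in V(x_0)$. By \eqref{eq:ker} in Remark \ref{remark:CRC_kernels}, $\ker Df(x)=\ker Df(x_0)$, so $e_2\in\ker Df(x)$ and therefore, by Fact \ref{fact:representation_bstar}, $Df_i(x)(e_2)=b_i^*(Df(x)e_2)=0$ for all $i$. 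Using that $Df_i(x)\in E^*$ is bounded and linear and that the series defining $e_1$ converges in $E$, together with Fact \ref{fact:representation_bstar} again, we obtain
\begin{equation*}
Df_i(x)(e)=Df_i(x)(e_1)=\sum_{j\in J_2}\beta_j(e)\,Df_i(x)(Df_j(x_0)^*)=\sum_{j\in J_2}\beta_j(e)\,b_i^*\big(Df(x)(Df_j(x_0)^*)\big)=\sum_{j\in J_2}\beta_j(e)\,w_j^i(x)
\end{equation*}
for every $i\in J_1$, which is \eqref{coeficients}. Finally, at $x=x_0$ one has $w_j^i(x_0)=b_i^*(Df(x_0)(Df_j(x_0)^*))=Df_i(x_0)(Df_j(x_0)^*)=Df_j(x_0)^*(Df_i(x_0))=\delta_{ij}$, the last equality being the biorthogonality of Proposition \ref{fact:existence_dial_basis} (equivalently, \eqref{delta_2ss} in the proof of Proposition \ref{proposition:E1_representation}), which is \eqref{eq:zero:one}.

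I do not expect a deep obstruction here; the structural point that does the work is the kernel-stability $\ker Df(x)=\ker Df(x_0)$ on $V(x_0)$, which lets the single family $(\beta_j(e))_{j\in J_2}$ serve all $x\in V(x_0)$ at once. The steps that require genuine care are (i) checking that Proposition \ref{proposition:E1_representation} applies to the subfamily indexed by $J_2$ — in particular identifying $\ker$ and $\spanc$ over $J_1$ with those over $J_2$ via CRC+ \ref{item:CRC:1plus}, and the minor technicality that $(f_i)_{i\in J_2}$ is of class $C^1$ as a map into $Y(J_2)$ — and (ii) justifying the term-by-term evaluation $Df_i(x)\big(\sum_{j\in J_2}\beta_j(e)Df_j(x_0)^*\big)=\sum_{j\in J_2}\beta_j(e)Df_i(x)(Df_j(x_0)^*)$, which rests on the convergence in $E$ of the basis expansion of $e_1\in E_1$ together with the boundedness of $Df_i(x)$.
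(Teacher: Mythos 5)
Your proposal is correct and follows essentially the same route as the paper's own proof: split $E=E_1\oplus E_2$ via Proposition \ref{proposition:E1_representation}, expand $e_1$ in the basis $(Df_j(x_0)^*)_{j\in J_2}$ to get $x$-independent coefficients $\beta_j(e)$, kill $e_2$ using the kernel stability \eqref{eq:ker} from Remark \ref{remark:CRC_kernels}, and evaluate term by term. Your write-up is somewhat more careful than the paper's (in checking that the hypotheses of Proposition \ref{proposition:E1_representation} are met for the subfamily indexed by $J_2$ and in justifying the interchange of $Df_i(x)$ with the series), but these are refinements of the same argument, not a different one.
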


 \begin{proof}
	The existence of $Df_i(x_0)^*\in E^{**}$, $i\in J_2$ is ensured by Proposition \ref{fact:existence_dial_basis}. By \ref{assumption:Cplus} of CRC+, 
	%assumption \ref{item:3}, 
	%Proposition \ref{proposition:E1_representation},
	$Df_i(x_0)^*\in E$, $i\in J_2$. 
	Since, by \ref{assumption:Bplus} of CRC+,  $D f_{i}(x_0)$, $i\in J_2$ forms a basis of $\spanc ( Df_i(x_0),\ i \in J_1)  $ by Proposition \ref{fact:existence_dial_basis}, 
	$D f_{i}(x_0)^*$, $i\in J_2$ forms a basis of $(\spanc ( Df_i(x_0),\ i \in J_1))^*  $.
	
% 	By CRC, there exists a neighbourhood $V(x_0)$ such that for all $x\in V(x_0)$ 
% 		\begin{equation*}
% 		D f_{i}(x)^*(D f_{j}(x))=\left\{\begin{array}{ll}
% 		1 & \text{if } i=j,\\
% 		0 & \text{if } i\neq j.
% 		\end{array}\right.		\quad i,j\in J_2
% 		 .
% 		\end{equation*}
		
%	 Denote, $h_i(x,e)=b_i^*(Df(x)(e))$ for $i\in J_2$, $e\in E$, $x\in V(x_{0})$. 
By Proposition \ref{proposition:E1_representation},
		 \begin{align}\label{eq:coefficients}
		 \begin{aligned}
		      & e=e_{1}+e_{2},\ e_{2}\in\text{ker } Df(x_{0}),\\
		      & e_{1}=\sum_{k\in J_{2}}\beta_{k}(e)Df_{k}^{*}(x_{0}),\ \beta_{k}(e)\in\mathbb{R},\ k\in J_2.
		 \end{aligned}
		 \end{align}

		 %For $x=x_{0}$
		 %$$
		 %h_i(x_{0},e)=b_i^*(Df(x_{0})\sum_{k\in %J_{2}}\beta_{k}Df_{k}^{*}(x_{0}))=\sum_{k\in %J_{2}}\beta_{k}b_{i}^{*}(Df(x_{0})Df_{k}^{*}(x_{0}))
		 %$$
	By \ref{item:CRC:3plus} of CRC+  and  \eqref{eq:ker}, $e_{2}\in\text{ker } Df(x)$ for $x\in V(x_{0})$ and for  $i\in J_1$ we have
\begin{align}
\begin{aligned}
\label{formula:Dfi}
    		 Df_i(x)(e)&=b_i^*(Df(x)(e_1+e_2))=b_i^*(Df(x)\sum_{k\in J_{2}}\beta_{k}(e)Df_{k}^{*}(x_{0}))\\
    		 &=\sum_{k\in J_{2}}\beta_{k}(e)b_{i}^{*}(Df(x)Df_{k}^{*}(x_{0})).
    		 \end{aligned}
\end{align}
% 	 Therefore, since $(b_i)_{i\in \mathbb{N}}$  is Besselian,
% 		 \begin{align*}
% 		   \ell_2(J_2) \ni  &(Df_i(x)(e))_{i\in J_2} = \sum_{k\in J_{2}}\beta_{k}(e) (b_{i}^{*}(Df(x)Df_{k}^{*}(x_{0})))_{i\in J_2}\\
% 		   =&\sum_{k\in J_{2}}\beta_{k}(e) w_k(x),
% 		   {\color{red} czy \ to \ jest \ dobrze \ dla \ x\in V?}
% 		 \end{align*}
% 		 %where the convergence of the series is understood in the sense of  $\ell_2(J_2)$ norm
% 		 % where
% 		% $\|(h_i(x,e))_{i\in J_2^n}-\sum_{k\in J_{2}^n}\beta_{k}(e) w_k(x)\|_{\ell_2}\rightarrow 0$, $\lim_{n\rightarrow +\infty }J_2^n=J_2$ .
% 		where
% 		\begin{equation}\label{set:ell2tilde}
% 	    \ell_2(J_2):=\spanc (w_j(x_0),\ j\in J_2).%=\spanc (e_j,\ j\in J_2).
%         \end{equation}
	
	   \end{proof}

		 \begin{remark}
		 In other words, \eqref{coeficients} means that for $x\in V(x_{0})$ and $i\in J_{2}$
			\begin{equation}
		\label{coeficients1}
		    Df_i(x)(e)=\sum_{j\in J_2}\beta_j(e)w_j^{i}(x)=\sum_{j\in J_2}\beta_j(e)b_i^*(Df(x) (D f_{j}(x_0)^*)).
		\end{equation}
		In particular, for $x=x_{0}$ and $i\in J_{2}$,
		\begin{equation}
		\label{coeficients2}
		    Df_i(x_{0})(e)=\sum_{j\in J_2}\beta_j(e)w_j^{i}(x_{0})=\sum_{j\in J_2}\beta_j(e)b_i^*(Df(x_{0}) (D f_{j}(x_0)^*))=\beta_{i}(e).
		\end{equation}

			 \end{remark}

	\begin{remark}
	By Proposition \ref{fact:existence_dial_basis}, vectors $w_j(x_0)$, $j\in J_2$ are such that 
	\begin{equation*}
	     (w_{j}^{i}(x_0))_{i\in J_2}=  \left\{ \begin{array}{rc}
	        1, & i=j, \\
	        0, & i\neq j.
	    \end{array} \right. 
	\end{equation*}

 Observe that by \eqref{formula:Dfi}, for $x=x_{0}$ and $j\in J_{1}\setminus J_{2}$
$$
\sum_{k\in J_{2}}\beta_{k}(e)b_{j}^{*}(Df(x_{0})Df_{k}^{*}(x_{0}))=0
$$
 because $Df(x_{0})Df_{k}^{*}(x_{0})=\sum_{i\in J_1} Df_i(x_0)(Df_k^*(x_0))b_i=b_{k}$ for $k\in J_{2}$ and, by the definition of $b^{*}_{j}$, 
		$$
		b^{*}_{j}(b_{k})=0\ \ \text{for}\ \ j\in J_{1}\setminus J_{2}.
		$$
		Hence 
			\begin{equation}
		\label{coeficients2aa}
		    Df(x_{0})(e)=\sum_{j\in J_1}Df_{j}(x_{0})(e)b_{j}
		    =\sum_{j\in J_2}\beta_j(e)b_{j}.
		\end{equation}
	\end{remark}

In  the following proposition we prove that CRC+
%together with conditions \ref{item:basis2:1}, \ref{item:basis2:2}, \ref{item:3}, below 
ensure that the mapping defined by \eqref{isomorphism} is an isomorphism. This proposition together with the Rank Theorem (Theorem \ref{theorem:rank}) will allow us to prove Proposition \ref{proposition:functional_dependence} which is crucial in the proof of the main result in Section \ref{section:main_result}.

\begin{proposition}\label{proposition:CRC_and_isomorphism}
	Let $E$ be a %reflexive 
	Banach space, $F$ be a Hilbert space and assume that $(b_i)_{i\in \mathbb{N}}$ is a Besselian basis of $F$ and $J_1\subset \mathbb{N}$.  
	Let $(f_i)_{i\in J_1}:\ U \rightarrow Y(J_1)$,  
	%\spanc ( b_i,\ i\in J_1)\subset F$,
	$U\subset E$ open,  be  a $C^{1}$ mapping.
	%,  $f(x)=\sum_{i\in J_1} b_i^*(f(x))b_i $, $x\in U$.
	%and let  $f_i:=b_i^*(f)$, $i\in \mathbb{N}$. 
	%Let $E_2= \ker (Df(x_0)))$. %=\cap_{i\in \mathbb{N}}\ker  (Dg_{i}(x_0))$. 
%	Let $E_2=\bigcap_{i\in J_1} \ker Df_i(x_0) $.
	Assume that CRC+ holds for $(f_i)_{i\in J_1}$ 
	%, $f_i=b_i^*(f)$, $i\in J_1$ 
	at $x_0$ with a neighbourhood $V(x_0)$ with index set $J_2\subset J_1$.	
	%$E=E_1 \oplus E_2,$ $E_2=\bigcap_{i\in J_1} \ker Df_i(x_0) $ 
% 	and  
% 	\begin{enumerate}[label=\emph{(\roman*)}]
% 	    \item $(Df_i(x_0))_{i\in J_2}$ forms a shrinking and boundedly-complete basis for $X_1:=\spanc (Df_i(x_0),\ i\in J_1)$,
% 	    \item $(Df_i(x_0))_{i\in J_2}$ is Besselian for $\spanc(Df_i(x_0),\ i\in J_1)$,
% 	    \item $Df_i(x_0)^*\in E$, $i\in J_2$.
% 	\end{enumerate}
	
	%Assume that $E=E_1 \oplus E_2$. % where $E_2=\ker \bigcap_{i\in J_2} Df_i(x_0)$.
	
	%and  $(Dg_i(x_0))_{i\in \mathbb{N}}$  forms a boundedly-complete basis for $X_1:=\spanc\,((Dg_i(x_0))_{i\in \mathbb{N}}).$ % Assume that $E_2$ is complemented in $E$, i.e.   there exists $E_1$ - closed subspace of $E$ such that $E=E_1\oplus E_2$ and $E_1\cap E_2=\{0\}$. 
  	%$f_i(x_0)=0$ for $i\in 1,\dots,\kappa$ and 
	%$\omega$-CRC
 	
 	Then 
	\begin{equation} 
	\label{isomorphism}
    t_x:=((Df_i(x))_{i\in J_1})\big|_{E_1} : E_1 \rightarrow   (Df_i(x))_{i\in J_1}(E),\quad \forall x\in V(x_0)
	\end{equation}
	is an isomorphism, where $E_1:=X_1^*=\spanc (Df_i(x_0)^*,\ i\in J_2)$.%, $Df^1(x):=\sum_{i\in J_1} b_i^*(Df(x))Df_i(x)=\spanc (Df_i(x_0)^*,\ i\in J_2)$. 
\end{proposition}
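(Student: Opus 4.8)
The plan is to show that $t_x$ is a continuous linear bijection between Banach spaces and then invoke the open mapping theorem; equivalently, to exhibit an explicit bounded inverse. First I would fix $x\in V(x_0)$ and analyze the domain $E_1 = X_1^* = \spanc(Df_i(x_0)^*,\ i\in J_2)$. By Proposition~\ref{proposition:E1_representation} (whose hypotheses \ref{assumption:A}, \ref{assumption:B}, \ref{assumption:C} are exactly \ref{assumption:Aplus}, \ref{assumption:Bplus}, \ref{assumption:Cplus} of CRC+), we have the splitting $E = E_1 \oplus E_2$ with $E_2 = \ker Df(x_0)$, and moreover $X_1 = \spanc(Df_i(x_0),\ i\in J_1)$ is reflexive with $(Df_i(x_0))_{i\in J_2}$ a boundedly-complete, shrinking, Besselian basis. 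A generic element of $E_1$ is $e_1 = \sum_{k\in J_2}\beta_k Df_k(x_0)^*$ with $(\beta_k)\in\ell_2(J_2)$ (Hilbertian, by Lemma~\ref{lemma:besseliantohilbertian} applied to \ref{assumption:Bplus}). Continuity of $t_x$ is immediate since $t_x$ is the restriction of the bounded operator $Df(x):E\to F$ (composed with the bounded coordinate functionals).

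Next I would establish injectivity. If $t_x(e_1) = 0$, i.e. $Df(x)(e_1) = 0$, then $e_1 \in \ker Df(x)$; by CRC \ref{item:CRC:3plus} together with formula~\eqref{eq:ker} of Remark~\ref{remark:CRC_kernels}, $\ker Df(x) = \ker Df(x_0) = E_2$ for all $x\in V(x_0)$. But $e_1\in E_1$ and $E_1\cap E_2 = \{0\}$ by the splitting, so $e_1 = 0$. For surjectivity onto $(Df_i(x))_{i\in J_1}(E)$: given any $y = Df(x)(e)$ with $e\in E$, decompose $e = e_1 + e_2$ along $E = E_1\oplus E_2$; since $e_2\in E_2 = \ker Df(x)$, we get $y = Df(x)(e_1) = t_x(e_1)$, so $t_x$ is onto its stated codomain.

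The remaining and main obstacle is to verify that the codomain $(Df_i(x))_{i\in J_1}(E)$, with its norm inherited from $Y(J_1)$, is a Banach space, so that the open mapping theorem applies and $t_x^{-1}$ is bounded. Here I would use Lemma~\ref{lemma:basis2}: for $e = e_1 + e_2$ as above with $e_1 = \sum_{k\in J_2}\beta_k Df_k(x_0)^*$, equation~\eqref{coeficients} gives $Df_i(x)(e) = \sum_{j\in J_2}\beta_j(e)\,w_j^i(x)$ for $i\in J_1$, where the $\beta_j(e)$ depend only on $e$ (in fact $\beta_j(e) = Df_j(x_0)(e)$ by~\eqref{coeficients2}), not on $x$, and $w_j^i(x_0) = \delta_{ij}$ by~\eqref{eq:zero:one}. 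Thus $Df(x)(e)$ is determined by the same coefficient vector $(\beta_j(e))_{j\in J_2}$ that represents $Df(x_0)(e) = \sum_{j\in J_2}\beta_j(e)b_j$ (formula~\eqref{coeficients2aa}); this vector lies in $\ell_2(J_2)$ because $(b_j)_{j\in J_1}$ is Besselian. So $t_x$ factors through the coefficient map $e_1 \mapsto (\beta_j)_{j\in J_2}$, and I would argue the image $(Df_i(x))_{i\in J_1}(E)$ is the continuous linear image, under the map $(\beta_j)_{j\in J_2}\mapsto \sum_{j\in J_2}\beta_j\,(w_j^i(x))_{i\in J_1}$, of a closed subspace of $\ell_2(J_2)$; combining with \ref{item:CRC:2plus} (closedness of $(Df_i(x_0))_{i\in J_2}(E)$ in $Y(J_2)$) and \ref{item:CRC:1plus} (basis property propagating to all $x\in V(x_0)$, so that $(Df_i(x))_{i\in J_2}$ is still a basis equivalent to $(Df_i(x_0))_{i\in J_2}$ via $z_x$), one gets that $(Df_i(x))_{i\in J_1}(E)$ is closed, hence Banach. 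Then $t_x$ is a continuous bijection of Banach spaces, and the open mapping theorem yields that $t_x$ is an isomorphism for every $x\in V(x_0)$. The delicate point throughout is keeping track of which norm the codomain carries (the $Y(J_1)$-norm versus the $\ell_2$-norm on coefficients, which coincide up to equivalence precisely because of the Besselian/Hilbertian hypotheses and Remark~\ref{coefficient:Hilbert}).
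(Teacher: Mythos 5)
Your proposal follows essentially the same route as the paper: continuity of $t_x$ from the $C^1$ hypothesis, injectivity from the constancy of kernels $\ker Df(x)=\ker Df(x_0)=E_2$ on $V(x_0)$ (Remark~\ref{remark:CRC_kernels}) together with $E_1\cap E_2=\{0\}$, and surjectivity from the splitting $E=E_1\oplus E_2$ of Proposition~\ref{proposition:E1_representation}. The paper phrases surjectivity through the coefficient computations of Lemma~\ref{lemma:basis2} (showing $Df_i(x)(e)=Df_i(x)(e_1)$ for all $i\in J_1$ with $e_1=\sum_{j\in J_2}\beta_j(e)Df_j(x_0)^*$), which is the same content as your ``kill the $E_2$-component'' argument. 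The one genuine difference is that you explicitly flag and attack the continuity of $t_x^{-1}$ via the open mapping theorem, which requires the codomain $(Df_i(x))_{i\in J_1}(E)$ to be complete; the paper's proof stops at ``continuous linear bijection'' and leaves this step implicit, so your version is the more careful one, even though your closedness argument for the image at general $x\in V(x_0)$ (transporting \ref{item:CRC:2plus} from $x_0$ to $x$ through the basis equivalence $z_x$) remains a sketch rather than a full verification. A minor inaccuracy: you assert that a generic element of $E_1$ has $\ell_2$ coefficients by the Hilbertian property, but Hilbertian gives the converse implication ($\ell_2$ coefficients $\Rightarrow$ convergence); the $\ell_2$ bound for the specific coefficients $\beta_j(e)=Df_j(x_0)(e)$ actually comes from the Besselian hypothesis, as in the proof of Proposition~\ref{proposition:E1_representation}. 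This does not affect the validity of your argument.
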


\begin{proof}
% Let $f^1(x):=(b_i^*(f(x)))_{i\in J_2}$ and $Df^1(x)(e)=(b_i^*(Df(x)(e)))_{i\in J_2}$ for all $e\in E$. By CRC we have that $E_2=\ker Df(x_0) = \bigcap_{i\in J_1} \ker Df_{i}(x_0) = \bigcap_{i\in J_2} \ker Df_{i}(x_0)= \ker Df^1(x_0)$. Let us note that $(Df_i(x_0))_{i\in J_2}$ forms a boundedly complete basis for $\spanc (Df_i(x_0),\ i\in J_1)=\spanc (Df_i(x_0),\ i\in J_2)$. By Proposition \ref{proposition:E1_representation} applied to $f^1:\ E \rightarrow \spanc (b_i,\ i\in J_2)$ we obtain that $E_1=E_1^{f^1}= \spanc (Df_i(x_0)^*, i\in J_2)$.

% We start by observing that 
% $$Df(x)(e_1)=\sum_{i\in J_1} b_i^*(Df(x)(e_1))b_i=\sum_{i\in J_1} Df_i(x)(e_1)b_i,
% $$ 
% $e_{1}\in E_1$, $x\in V(x_0)$ ($V(x_0)$ is a neighbourhood such that CRC+ holds) is fully determined by the set $(Df_i(x)(e_1))_{i\in J_2}$. 
% %Let $e_1 \in E_1$. %and $\tilde{g}=Df(x)(e_1)$,
% Indeed, 
By \ref{item:CRC:1plus} of CRC+, for any $x\in V(x_0)$ for any $l\in J_1\setminus J_2$, $Df_l(x)(e_1)$ can be expressed by $Df_i(x)(e_1)$, $i\in J_2$, i.e., there exists scalars $\alpha _i^l (x)$, $i\in J_2$, $l\in J_1\setminus J_2$ such that
\begin{align*}
    Df_l(x)(e_1)=\sum_{i\in J_2}\alpha _i^l (x)Df_i(x)(e_1).
\end{align*}

Since $e_1\in E_1$,  by Lemma \ref{lemma:basis2} we have
    \begin{align*}
    Df_i(x)(e_1) =  \sum_{j\in J_2} \beta_j(e_1) Df_i(x) Df_j(x_0)^*, \quad i\in J_2.
\end{align*}
 Since $(f_i)_{i\in J_1}$ is a $C^{1}$ mapping, $t_{x}$ is a continuous (linear) mapping. Now we show that $t_{x}$, $x\in V(x_{0})$, is a bijection.

1. Step: injectivity. 
%We will show that $t_{x}$ is injective for 
Let $x\in V(x_0)$.
Let us take $x_1, x_2 \in E_1$. 
% By Proposition \ref{proposition:E1_representation}, since $(Df_i(x_0))^{**}=Df_i(x_0)$, $i\in J_2$, we have $x_1 = \sum\limits_{i\in J_2}  Df_i(x_0)(x_1)  ( Df_i(x_0))^* $, ~$x_2 = \sum\limits_{i\in J_2}  Df_i(x_0)(x_2)  ( Df_i(x_0))^*$. 
% First let us show that $t_{x_0}$ is injective. Suppose  that $Df(x_0)(x_1)=Df(x_0)(x_2)$, $i\in J_1$.
% Then $Df(x_0)(x_1-x_2)=0$, i.e., $x_1-x_2\in \ker Df(x_0)=E_2$. On the other hand $x_1-x_2\in E_1$, hence $x_1=x_2$.
 Suppose that $(Df_i(x))_{i\in J_1}(x_1)=(Df_i(x))_{i\in J_1}(x_2)$. Then $(Df_i(x))_{i\in J_1}(x_1-x_2)=0$ and by Remark \ref{remark:CRC_kernels}, $x_1-x_2\in \ker (Df_i(x)_{i\in J_2})=\ker (Df_i(x_0)_{i\in J_1})=\bigcap_{i\in J_1} Df_i(x_0)$. On the other hand $x_1-x_2\in E_1$, hence $x_1=x_2$.

2. Step: surjectivity. %By CRC,  $(Df_i(x))_{i\in {J_2}}$ forms a basis of $\spanc\,(Df_i(x),\ i\in J_1)$ for all  $x\in V(x_0)$.
Let $e\in E$ and $x\in V(x_0)$. By \ref{item:CRC:1plus} of CRC+, 
\begin{align*}
     Df_l(x)(e)=\sum_{i\in J_2} \alpha_i^l(x)Df_i(x)(e),\quad l\in J_1\setminus J_2.
\end{align*}
% and, by Lemma \ref{lemma:basis2},
% \begin{align*}
%     (Df_i(x)(e))_{i\in J_2} = \sum_{j\in J_2} \beta_j(e)w_j(x),
% \end{align*}
% where $w_j(x)=(b_k^*(Df(x)Df_j(x_0)^*))_{k\in J_2}$, $j\in J_2$ is given as in Lemma \ref{lemma:basis2}. 
For $i\in J_2$
\begin{align}
\label{basic1}
\begin{aligned}
    Df_i(x)(e)&=\sum_{j\in J_2} \beta_j(e) (Df_i(x)Df_j(x_0)^*)\\
    &=Df_i(x)(\sum_{j\in J_2} \beta_j(e) Df_j(x_0)^*)
%    \sum_{j\in J_2} \beta_j(e) \left(b_i^*(Df(x)Df_j(x_0)^*)\right)\\
%    &=b_i^*\left(Df(x)(\sum_{j\in J_2} \beta_j(e) Df_j(x_0)^*)\right)
    \end{aligned}
\end{align}
and for $l\in J_1\setminus J_2$
\begin{align}
\label{nonbasic1}
\begin{aligned}
    Df_l(x)(e)& = \sum_{i\in J_2} \alpha_i^l(x)Df_i(x)(e)\\
    %= \sum_{i\in J_2} \alpha_i^l(x)h_i(e) 
    &= \sum_{i\in J_2} \alpha_i^l(x)Df_i(x) \left(\sum_{j\in J_2} \beta_j(e) Df_j(x_0)^*\right)\\
    & = \sum_{i\in J_2} \alpha_i^l(x)Df_i(x) \left(e_1 \right)\\
    &=Df_l(x)\left(e_1 \right),%=b_l^*\left(Df(x)(\sum_{j\in J_2} \beta_j(e) Df_j(x_0)^*)\right),
     \end{aligned}
\end{align}
where $e_1=\sum_{j\in J_2} \beta_j(e)Df_j(x_0)^*\in E_1$ (hence  $Df_i(x)(e)=Df_i(x)(e_1)$, $i\in J_2$).
Since $Df_i(x)(e)=Df_i(x)(e_1)$, $i\in J_2$ and $Df_l(x)(e)=Df_l(x)(e_1)$, $l\in J_1\setminus J_2$ we obtain that $(Df_i(x))_{i\in J_1}(e)=(Df_i(x))_{i\in J_1}(e_1)$. 

\end{proof}

% {\color{blue}
% Let us denote
% \begin{align}    \label{notation_J}
%     \begin{aligned}
%     \tilde{\ell}_2(J)&=\spanc \{ (b_i^*(Df(x_0)(Df_k(x_0)^*))_{i\in J},\ k\in J \}
%     % \\
%     % &=  \spanc \left\{ \begin{array}{ll}
%     %     ... & ...  \\
%     %     ... & ...
%     % \end{array} \right\} 
%     \end{aligned}
% \end{align}
% where
% \begin{equation}
%     Df_k(x_0)^*=\left\{ \begin{array}{cc}
%         Df_k(x_0)^*, & k\in J_2  \\
%          & 
%     \end{array} \right.,\quad  k\in J.
% \end{equation}
% }

%{\color{red} Clearly, since $(b_{i})_{i\in J_{1}}$ is Besselian, for any %$x\in V(x_{0})$ and $e\in E$, 
%\begin{equation} 
%\label{coefficientspace} 
%(Df_{i}(x)(e))_{i\in J_{1}}\in Y(J_{1}):=\{y=(y_{i})_{i\in J_{1}}\ | \ \sum_{i\in J_{1}}[y_{i}]^{2}<+\infty\}.
%\end{equation}
%In the sequel we refer to $Y(J_{1})$ as the coefficient space related to $\spanc(b_{i}\ |\ i\in J_{1})$.
%By \eqref{basic1} and \eqref{nonbasic1}, for any $i\in J_{1}$
%\begin{align}
%\label{nonbasic2}
%\begin{aligned}
%    Df_i(x)(e)
 %   &=Df_i(x)\left(\sum_{j\in J_2} \beta_j(e) Df_j(x_0)^*\right)=b_i^*\left(Df(x)(\sum_{j\in J_2} \beta_j(e) Df_j(x_0)^*)\right)\\
 %    &=\sum_{j\in J_2} \beta_j(e)b_i^*\left( %(\underbrace{Df(x)Df_j(x_0)^*}_{w_{j}?})\right).\\
 %    \end{aligned}
%\end{align}
%In this sense, elements $(Df_{i}(x)(e))_{i\in J_{1}}\in Y(J_{1})$, $x\in V(x_{0})$, $e\in E$, are fully determined by by $w_{j}:=Df(x)Df_j(x_0)^*$, $j\in J_{2}$.
%}
\section{ CRC+ and functional dependence} \label{section:CRCandfunctional}

 In this section we use CRC+ to prove Proposition \ref{proposition:functional_dependence} which provides conditions for the  functional dependence in the form of formula \eqref{fun_dep} and is based on  Proposition \ref{proposition:E1_representation}, Proposition \ref{proposition:CRC_and_isomorphism}, and  Rank Theorem (Theorem \ref{theorem:rank}). Proposition \ref{proposition:functional_dependence} will be used in the proof of the main result, Theorem
\ref{theorem:tangent_cone} of Section \ref{section:main_result}.

For convenience of the Reader we start by recalling the rank and the local representation theorems.

\begin{theorem}(Rank Theorem, see \cite[Theorem 2.5.15]{manifolds_tensor_vol2} )\label{theorem:rank}
%\ref{rank-theorem}
Let $E,\ Y$ be
Banach spaces. 
	Let $x_0\in U$, where $U$ is an open subset of $E$ and $f:\ U\rightarrow Y$ be of class $C^1.$ 
	
	Assume that $Df(x_0)$ has closed split image $Y_1$ with closed component $Y_2$ and split kernel $E_2$ with closed component $E_1$ and that for all $x$ in some neighbourhood of $x_0$, $Df(x)|E_1:\ E_1 \rightarrow Df(x)(E)$ is an isomorphism.

	Then there exist open sets $U_1\subset Y_1\oplus E_2$, $U_2\subset E$, $V_1\subset Y$, $V_2\subset Y_1\oplus E_2$ and diffeomorphisms of class $C^1$, $\varphi:\ V_1\rightarrow V_2$ and $\psi:\ U_1\rightarrow U_2$,  $x_0=(x_{01},x_{02})\in U_2\subset U\subset E_1\oplus E_2$, i.e. $x_{01}\in E_1$, $x_{02}\in E_2$, $f(x_0)\in V_1$ satisfying 
	\begin{equation*}
		(\varphi \circ f \circ \psi )(w,e)=(w,0),\quad \text{where}\ w\in Y_1,\ e\in E_2
	\end{equation*} for all $(w,e)\in U_1$.
\end{theorem}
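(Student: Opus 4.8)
\emph{Overall strategy.} The plan is the classical three-move proof of the rank theorem: straighten $f$ in the source by the inverse function theorem, use the constant-rank hypothesis to remove the $E_2$-variable, and then straighten in the target by an explicit triangular diffeomorphism. Throughout I would write $E=E_1\oplus E_2$, $Y=Y_1\oplus Y_2$ with the associated projections $P^E_j,P^Y_j$, and split $f=(f_1,f_2)$, where $f_j:=P^Y_j\circ f$.

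\emph{Source straightening.} First I would observe that $Df(x_0)|_{E_1}$ is an isomorphism onto $Df(x_0)(E)=Y_1$, so its $Y_2$-component vanishes and $D_1 f_1(x_0):=P^Y_1 Df(x_0)|_{E_1}:E_1\to Y_1$ is a toplinear isomorphism. Since invertible operators form an open subset of $L(E_1,Y_1)$ and $x\mapsto D_1 f_1(x)$ is continuous, $D_1 f_1(x)$ stays invertible on a neighbourhood of $x_0$ contained in the one supplied by the hypothesis, which I fix. Then $\psi_1(x_1,x_2):=(f_1(x_1,x_2),x_2)\in Y_1\oplus E_2$ has derivative at $x_0$ that is block upper-triangular with invertible diagonal blocks $D_1 f_1(x_0)$ and $\mathrm{id}_{E_2}$, hence invertible; the inverse function theorem then makes $\psi_1$ a $C^1$ diffeomorphism onto a neighbourhood of $\psi_1(x_0)$. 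Putting $\psi:=\psi_1^{-1}$ and $g:=f\circ\psi$, the fact that the $Y_1$-block of $\psi_1$ is $f_1$ forces $g(w,e)=(w,g_2(w,e))$ with $g_2$ of class $C^1$.

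\emph{Removing the $E_2$-variable (the crux).} The hard part will be to prove $D_e g\equiv 0$ near the base point, and this is precisely where the full constant-rank hypothesis is used. From the triangular form of $D\psi_1$ one computes, for $x=\psi(w,e)$ and $L(x):=(D_1 f_1(x))^{-1}$ (bounded by the open mapping theorem), $D\psi(w,e)(0,h_e)=(-L(x)D_2 f_1(x)h_e,\,h_e)$. Because $Df(x)|_{E_1}$ is an isomorphism onto $Df(x)(E)$, we get $Df(x)(E_2)\subseteq Df(x)(E_1)$, so the unique $u_1\in E_1$ with $Df(x)|_{E_1}u_1=Df(x)|_{E_2}h_e$ satisfies, after applying $P^Y_1$, $D_1 f_1(x)u_1=D_2 f_1(x)h_e$, i.e.\ $u_1=L(x)D_2 f_1(x)h_e$. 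Hence
\[
Dg(w,e)(0,h_e)=-Df(x)|_{E_1}\bigl(L(x)D_2 f_1(x)h_e\bigr)+Df(x)|_{E_2}h_e=0 .
\]
Shrinking the domain to a product of balls $W_1\times W_2\subseteq Y_1\times E_2$ and invoking the mean value inequality, $g$ becomes independent of $e$: $g(w,e)=(w,h(w))$ with $h(w):=g_2(w,e_0)$ of class $C^1$ on $W_1$.

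\emph{Target straightening.} Finally I would set $\varphi(y_1,y_2):=(y_1,y_2-h(y_1))$ on a neighbourhood $V_1$ of $f(x_0)$ of product form whose second factor contains $h(W_1)$; then $\varphi$ is $C^1$ with $C^1$ inverse $(y_1,y_2)\mapsto(y_1,y_2+h(y_1))$, hence a diffeomorphism onto $V_2:=\varphi(V_1)$, and $(\varphi\circ f\circ\psi)(w,e)=\varphi(w,h(w))=(w,0)$ on $U_1:=W_1\times W_2$, with $U_2:=\psi(U_1)$ — the asserted normal form. The only genuinely nontrivial ingredient is the displayed cancellation: it uses that $Df(x)|_{E_1}$ is an isomorphism onto $Df(x)(E)$ for \emph{all} nearby $x$ (not just at $x_0$), together with the open mapping theorem to keep $L(x)$ bounded; the inverse function theorem, the explicitly invertible $\varphi$, and the integration of a vanishing partial derivative on a ball are all routine.
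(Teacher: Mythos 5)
Your proposal is a correct and complete rendition of the standard proof of the Rank Theorem (source straightening via the inverse function theorem applied to $\psi_1=(f_1,\mathrm{id}_{E_2})$, the cancellation $D_e g\equiv 0$ from the constant-rank hypothesis, and the explicit shear $\varphi$ in the target), which is exactly the argument in the cited reference \cite[Theorem 2.5.15]{manifolds_tensor_vol2}; the paper itself states this theorem without proof, so there is nothing further to compare. The one step that genuinely needs the hypothesis for all nearby $x$ --- deducing $Df(x)(E_2)\subseteq Df(x)(E_1)$ and hence the vanishing of $Dg(w,e)(0,h_e)$ --- is handled correctly.
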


\begin{theorem}\label{theorem:local_representation}(\cite[Theorem 2.5.14]{manifolds_tensor_vol2} Local Representation Theorem)
 Let $E,\ Y$ be 
Banach spaces. 	Let $f:\ U \rightarrow Y$ be of class $C^r$, $r\geq1$ in a neighbourhood of $x_0\in U$, $U\subset E$ open set. Let $Y_1$ be closed split image of $Df(x_0)$ with closed complement $Y_2$.
	Suppose that  $Df(x_0)$ has split kernel $E_2=\ker Df(x_0)$ with closed complement $E_1$. 
	Then there are
	open sets $U_1\subset U \subset E_1\oplus E_2 $ and $U_2 \subset Y_1 \oplus E_2$, $x_0\in U_2$  and a $C^r$ diffeomorphism 
	$\psi:\ U_2\rightarrow U_1 $ such that $(f\circ \psi)(u,v)=(u,\eta(u,v))$ for any  $(u,v)\in U_1$, where 
	$u\in E_1$, $v\in E_2$ and  $\eta:\ U_2\rightarrow E_2$ is a $C^r$ map satisfying $D\eta(\psi^{-1}(x_0))=0$.
\end{theorem}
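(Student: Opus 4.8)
The plan is to reduce the statement to the classical Inverse Function Theorem for $C^r$ maps between Banach spaces, by composing $f$ with a suitably chosen ``straightening'' diffeomorphism. First I would fix continuous projections $\pi_{Y_1},\pi_{Y_2}$ associated with the splitting $Y=Y_1\oplus Y_2$ and write $f=(f_1,f_2)$ with $f_1:=\pi_{Y_1}\circ f$, $f_2:=\pi_{Y_2}\circ f$, and write points of $E$ as pairs $x=(x_1,x_2)\in E_1\oplus E_2$. Two facts about $Df(x_0)$ are then recorded. Since $E_2=\ker Df(x_0)$, the partial derivative $D_2f_1(x_0)=\pi_{Y_1}\circ Df(x_0)|_{E_2}$ vanishes. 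Since the range of $Df(x_0)$ is exactly $Y_1$, for $e\in E_1$ the vector $Df(x_0)e$ lies in $Y_1$, hence $Df(x_0)|_{E_1}=D_1f_1(x_0)\colon E_1\to Y_1$; this is a continuous linear bijection of the Banach space $E_1$ onto the Banach space $Y_1$ (surjective because $Df(x_0)$ annihilates $E_2$ and has range $Y_1$, injective because $E_1\cap\ker Df(x_0)=\{0\}$), hence a topological isomorphism by the open mapping theorem.

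Next I would introduce the $C^r$ map $\Phi\colon U\to Y_1\oplus E_2$, $\Phi(x_1,x_2):=(f_1(x_1,x_2),x_2)$. With respect to $E=E_1\oplus E_2$ and $Y_1\oplus E_2$, its derivative at $x_0$ is
\[
D\Phi(x_0)=\begin{pmatrix} D_1f_1(x_0) & 0\\ 0 & \mathrm{id}_{E_2}\end{pmatrix},
\]
which is a topological isomorphism since both diagonal blocks are. By the Inverse Function Theorem, $\Phi$ restricts to a $C^r$ diffeomorphism from an open neighbourhood $U_1\subset U$ of $x_0$ onto an open neighbourhood $U_2\subset Y_1\oplus E_2$ of $\Phi(x_0)$; I set $\psi:=(\Phi|_{U_1})^{-1}\colon U_2\to U_1$.

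It then remains to read off the normal form. Because $\Phi$ preserves the second coordinate, for $(u,v)\in U_2$ with $u\in Y_1$, $v\in E_2$ the point $z=\psi(u,v)\in U_1$ satisfies $z_2=v$ and $f_1(z)=u$; hence $(f\circ\psi)(u,v)=(f_1(z),f_2(z))=(u,\eta(u,v))$, where $\eta:=f_2\circ\psi\colon U_2\to Y_2$ is $C^r$ as a composition of $C^r$ maps. Finally I would differentiate the identity $(f\circ\psi)(u,v)=(u,\eta(u,v))$ at $p_0:=\Phi(x_0)=\psi^{-1}(x_0)$; the chain rule gives, for all $(a,b)\in Y_1\oplus E_2$,
\[
Df(x_0)\big(D\psi(p_0)(a,b)\big)=\big(a,\;D\eta(p_0)(a,b)\big),
\]
and since $D\psi(p_0)$ maps onto $E$ the range of the left-hand side equals $Df(x_0)(E)=Y_1$, which forces $D\eta(p_0)(a,b)=0$ for every $(a,b)$, i.e.\ $D\eta(\psi^{-1}(x_0))=0$.

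I do not expect a genuine obstacle: this is the classical ``submersion up to a remainder'' normal form. The only points that need care are the identification $Df(x_0)|_{E_1}=D_1f_1(x_0)$ together with its invertibility (which uses that $Y_1$ is precisely the range of $Df(x_0)$, that both splittings are topological, and the open mapping theorem), and checking that $\Phi$ is $C^r$ with invertible derivative so that the Banach-space Inverse Function Theorem applies; the remainder of the argument is bookkeeping with the two direct-sum decompositions.
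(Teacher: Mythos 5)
Your proof is correct and is essentially the standard argument for this result: the paper gives no proof of its own, quoting the theorem from the cited reference (Abraham--Marsden--Ratiu, Theorem 2.5.14), whose proof is exactly this reduction to the Banach-space Inverse Function Theorem via the auxiliary map $\Phi(x_1,x_2)=(\pi_{Y_1}f(x_1,x_2),\,x_2)$, using that $Df(x_0)|_{E_1}:E_1\rightarrow Y_1$ is a topological isomorphism. One remark: your $\eta=\pi_{Y_2}\circ f\circ\psi$ correctly takes values in $Y_2$ and is defined on the open set on which $\psi$ is defined, whereas the statement as transcribed in the paper writes $\eta:U_2\rightarrow E_2$ and lets $(u,v)$ range over $U_1$; these are slips in the paper's transcription (its later use of the theorem in the functional-dependence proposition takes $\eta$ with values in $Y_2$), and your version is the internally consistent one.
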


Theorem \ref{theorem:rank} and Theorem \ref{theorem:local_representation} allow to prove the following functional dependence result.

\begin{proposition}\label{proposition:functional_dependence}
Let $E$ be a %reflexive 
	Banach space, $F$ be a Hilbert space with Besselian and Hilbertian basis $(b_i)_{i\in \mathbb{N}}$.
	%, $F$ be a Hilbert space and assume that $(b_i)_{i\in \mathbb{N}}$ is a Besselian basis of $F$ and $J_1\subset \mathbb{N}$. 
%	Let $f:U\subset E\rightarrow \ell_2(J_1)$, $U$ open, be  a $C^{1}$ mapping.
	Let $(f_i)_{i\in J_1}:\ U \rightarrow Y(J_1)$, 
	%$f:\ U \rightarrow \spanc ( b_i,\ i\in J_1)\subset F$,
	$U\subset E$ open,  be  a $C^{1}$ mapping.
	%,  $f(x)=\sum_{i\in J_1} b_i^*(f(x))b_i $, $x\in U$ and let  $f_i:=b_i^*(f)$, $i\in J_1$. 
	%, $f(x)=\sum_{i\in J_1}b_i^*(f(x))b_i$, $x\in E$.
	Let %$E=E_1 \oplus E_2,$ where 
$E_2=\text{ker}\, (Df_i(x_0)_{i\in J_1})$.
Assume that CRC+ holds for $(f_i)_{i\in J_1}$, at $x_0$ with the index set $J_2\subset J_1$ and a neighbourhood $V(x_0)$.

%is a closed subset of  $\spanc (b_i, i\in J_1)$.
%$\ell_2(J_1)$. 
%and  $(Dg_i(x_0))_{i\in A}$, $A\subset I_0\cup I_1$ forms a boundedly-complete basis for $E_1$.
%:=((Dg_i(x_0))_{i\in A})(E)$.   	
%  Assume that
% 	\begin{enumerate}[label=\emph{(\roman*)}]
% 	    \item $(Df_i(x_0))_{i\in J_2}$ forms a shrinking and boundedly-complete basis for $X_1:=\spanc (Df_i(x_0),\ i\in J_1)$,
% 	    \item $(Df_i(x_0))_{i\in J_2}$ is Besselian for $\spanc(Df_i(x_0),\ i\in J_1)$,
% 	    \item $Df_i(x_0)^*\in E$, $i\in J_2$.
% 	\end{enumerate}
%$Dg(x_0)$ has closed split image $Y_1$ with closed component $Y_2$  and that 
%for all $x$ in some neighbourhood of $x_0$, $Df(x)|E_1:\ E_1 \rightarrow Df(x)(E)$ is an isomorphism. 
Then there exist functions $h_l:\ Y(J_2) \rightarrow \mathbb{R}$, $l\in J_1\setminus J_2$  of class $C^1$ such that
\begin{equation}
\label{fun_dep}
    f_l(x)=h_l((f_i(x))_{i\in J_2})\quad \text{for all } x \text{ in some neighbourhood of } x_0.
\end{equation}
\end{proposition}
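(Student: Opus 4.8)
The plan is to apply the Rank Theorem (Theorem \ref{theorem:rank}) to the map $f:=\sum_{i\in J_1} f_i(\cdot)b_i$ on a neighbourhood of $x_0$, and then read off the functional dependence from the normal form. First I would verify the hypotheses of Theorem \ref{theorem:rank}. By Proposition \ref{proposition:E1_representation}, applied with the data supplied by CRC+ (conditions \ref{assumption:Aplus}, \ref{assumption:Bplus}, \ref{assumption:Cplus} give exactly hypotheses \ref{assumption:A}, \ref{assumption:B}, \ref{assumption:C} for the subfamily $(f_i)_{i\in J_2}$, since by \ref{item:CRC:1plus} the span over $J_2$ equals the span over $J_1$), we get the split $E=E_1\oplus E_2$ with $E_1=X_1^*=\spanc(Df_i(x_0)^*,\ i\in J_2)$ and $E_2=\ker Df(x_0)$, and moreover $X_1$ is reflexive. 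For the closed split image: $Df(x_0)(E)=(Df_i(x_0))_{i\in J_1}(E)$, which by \eqref{coeficients2aa} equals $\{\sum_{j\in J_2}\beta_j b_j : (\beta_j)\in\ell_2(J_2)\}$ up to the correspondence with coefficients; this is closed in $F$ by \ref{item:CRC:2plus} of CRC+ (combined with the fact that $(b_i)_{i\in J_2}$ is a basic sequence and, being Hilbertian and Besselian, the coefficient map is an isomorphism onto $\ell_2(J_2)$), and it splits because $F$ is a Hilbert space (Proposition \ref{proposition_splits}); take $Y_2=Y_1^\perp$. Finally, the isomorphism hypothesis $Df(x)|_{E_1}:E_1\to Df(x)(E)$ for all $x$ near $x_0$ is precisely the content of Proposition \ref{proposition:CRC_and_isomorphism}.

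With the hypotheses in place, Theorem \ref{theorem:rank} furnishes $C^1$ diffeomorphisms $\varphi:V_1\to V_2$ (on a neighbourhood of $f(x_0)$ in $F$) and $\psi:U_1\to U_2$ (on a neighbourhood of $x_0$) with $U_1\subset Y_1\oplus E_2$, such that $(\varphi\circ f\circ\psi)(w,e)=(w,0)$ for $(w,e)\in U_1$, $w\in Y_1$, $e\in E_2$. The key point is that $Y_1=Df(x_0)(E)$ is spanned by $(b_j)_{j\in J_2}$, so the "coordinates'' of a point in $Y_1$ are exactly the $J_2$-coefficients. I would then set up the comparison: for $x$ near $x_0$, writing $f(x)=\sum_{i\in J_1}f_i(x)b_i$, the components $f_l(x)$ for $l\in J_1\setminus J_2$ are obtained by applying $b_l^*$; on the other hand, since the $Y_1$-part of $\varphi(f(x))$ depends only on the $Y_1$-part of $f(x)$, and after $\psi$ only on the $w$-coordinate, one can express $f(x)$ near $x_0$ as a $C^1$ function of the pair (its projection onto $Y_1$, zero). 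Concretely, composing $\varphi^{-1}$ with the inclusion $w\mapsto(w,0)$ shows that $f(x)$, and in particular each coefficient $f_l(x)$, $l\in J_1\setminus J_2$, is a $C^1$ function of the $Y_1$-component of $f(x)$, i.e. of $(f_i(x))_{i\in J_2}$. This defines the desired $h_l:Y(J_2)\to\mathbb{R}$ (identifying $Y(J_2)$ with the coordinates on $Y_1$, which is legitimate because $(b_i)_{i\in J_2}$ is a basic sequence and, being Besselian and Hilbertian, $Y(J_2)\cong\ell_2(J_2)\cong Y_1$ via Proposition \ref{proposition:Y_Banach} and Remark \ref{coefficient:Hilbert}).

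More precisely, I would argue as follows. Let $\pi_1:Y_1\oplus E_2\to Y_1$ be the projection and let $P_{Y_1}:F\to Y_1$ be the (orthogonal) projection. For $x=\psi(w,e)$ in the neighbourhood, the relation $(\varphi\circ f\circ\psi)(w,e)=(w,0)$ gives $f(\psi(w,e))=\varphi^{-1}(w,0)$, so $f(x)=\varphi^{-1}(\pi_1(\psi^{-1}(x)),0)$. Composing $b_l^*$ with $\varphi^{-1}(\cdot,0)$ yields a $C^1$ function $\tilde h_l$ on a neighbourhood of $0$ in $Y_1$ with $f_l(x)=\tilde h_l(\pi_1(\psi^{-1}(x)))$. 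It remains to check that $\pi_1(\psi^{-1}(x))$ is itself a $C^1$ function of $(f_i(x))_{i\in J_2}$; but $P_{Y_1}(f(x))=P_{Y_1}(\varphi^{-1}(\pi_1\psi^{-1}(x),0))$, and since $\varphi^{-1}(\cdot,0):Y_1\to F$ composed with $P_{Y_1}$ is a $C^1$ map which is a local diffeomorphism at $f(x_0)$ onto a neighbourhood in $Y_1$ (its derivative at $0$ is $P_{Y_1}\circ D\varphi^{-1}(f(x_0))|_{Y_1}$, and $D\varphi(f(x_0))$ maps $Y_1$ onto $Y_1$ — one sees this by differentiating $(\varphi\circ f\circ\psi)(w,e)=(w,0)$ at $w=x_{01},e=x_{02}$ and using that $Df(x_0)$ has image $Y_1$), we may invert it to express $\pi_1(\psi^{-1}(x))$ as a $C^1$ function of $P_{Y_1}(f(x))=\sum_{i\in J_2}f_i(x)b_i$, i.e. of $(f_i(x))_{i\in J_2}\in Y(J_2)$. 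Setting $h_l:=\tilde h_l\circ(\text{this }C^1\text{ map})$ gives \eqref{fun_dep}.

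The main obstacle I anticipate is the bookkeeping needed to identify $Y_1$ with the coordinate space $Y(J_2)$ cleanly and to justify that the composite $P_{Y_1}\circ\varphi^{-1}(\cdot,0)$ is a local $C^1$-diffeomorphism of $Y_1$ near $f(x_0)$; this requires carefully differentiating the normal form at the base point and invoking the Local Representation Theorem (Theorem \ref{theorem:local_representation}) or the inverse function theorem in Banach spaces, together with the Hilbertian–Besselian hypothesis on $(b_i)$ to make the coordinate identifications topological isomorphisms rather than mere algebraic bijections. The actual extraction of $h_l$ once these identifications are fixed is routine.
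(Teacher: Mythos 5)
Your proposal is correct and follows essentially the same route as the paper: verify the hypotheses of the Rank Theorem via Proposition \ref{proposition:E1_representation} (for the split of $E$), condition \ref{item:CRC:2plus} of CRC+ together with the Hilbert-space structure (for the closed split image), and Proposition \ref{proposition:CRC_and_isomorphism} (for the isomorphism condition), then read the functional dependence off the normal form $(\varphi\circ f\circ\psi)(w,e)=(w,0)$. The only difference is in the last step: where the paper additionally invokes the Local Representation Theorem to identify the $Y_1$-component of $f\circ\psi$ with $q$ and hence with $(f_i(x))_{i\in J_2}$, you instead invert $P_{Y_1}\circ\varphi^{-1}(\cdot,0)$ by the inverse function theorem -- a legitimate and, if anything, slightly cleaner way to close the argument.
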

\begin{proof}
By \ref{item:CRC:2plus} of CRC+, $Y_1:=(Df_i(x_0)(E))_{i\in J_2}$ is a closed subset of $Y(J_2)$. 
%By Remark \ref{coefficient:Hilbert},  $Y(J_2)=\ell_2(J_2)$. {\color{red} tutaj  $Y(J_2)$}
By Proposition \ref{proposition:E1_representation},  applied to $(f_{i})_{i\in J_2}:\ E \rightarrow Y(J_2)$, $(Df_i)_{i\in J_2}(x_0)$ has a split kernel $E_2=\ker ((Df_i(x_0))_{i\in J_2})=\ker ((Df_i(x_0))_{i\in J_1})$ with closed complement $E_1=\spanc (Df_i(x_0)^*,i\in J_2)=\spanc (Df_i(x_0)^*,i\in J_1)$, $E=E_1 \oplus E_2$.
By Proposition \ref{proposition:CRC_and_isomorphism}, $
 (Df_i(x)_{i\in J_1})_{|E_1}:\ E_1\rightarrow (Df_i(x)_{i\in J_1})(E)$, $x\in V(x_0)$ is an isomorphism. By \ref{item:CRC:1plus} of CRC+,
 %and since $(b_i)_{i\in \mathbb{N}}$ is Besselian and Hilbertian, for any $e\in E$,
%  \begin{align}\label{eq:representationY1}
%  \begin{aligned}
%      & Df(x_0)(e)=\sum_{i\in J_1} b_i^*(Df(x_0)(e))b_i = \sum_{i\in J_1} Df_i(x_0)(e)b_i,
%     %  \\
%     %  & =\sum_{i\in J_2} Df_i(x_0)(e)b_i + \sum_{i\in J_1\setminus J_2} Df_i(x_0)(e)b_i\\
%     %  &= \sum_{i\in J_2} Df_i(x_0)(e)b_i + \sum_{i\in J_1\setminus J_2} \sum_{j\in J_2}( \beta_{j}^i Df_j(x_0)(e) )b_i.{\color{red}?}
%      \end{aligned}
%  \end{align}
%  where
\begin{equation*}
    Df_i(x_0)(e)= \sum_{j\in J_2}( \beta_{j}^i Df_j(x_0)(e) ),\quad i\in J_1\setminus J_2,
\end{equation*}
 where $\beta_{j}^i\in \mathbb{R}$, $i\in J_1 \setminus J_2$, $j\in J_2$.
 
 Since $Y_1$ is closed, by Proposition 
 \ref{proposition_splits},  
  $\spanc (b_i, i\in J_1)$
 %$\ell_2(J_1)$
 splits, i.e., there exists $Y_2$ such that 
 $\spanc (b_i, i\in J_1)=Y_1\oplus Y_2$.
 %$\ell_2(J_1)=Y_1\oplus Y_2$. 

 By Rank Theorem \ref{theorem:rank}, there are
	open sets $U_2\subset U \subset E_1\oplus E_2 $ and $U_1 \subset Y_1 \oplus E_2$, $x_0\in U_2$, $V_1\subset \spanc ( b_i,\ i\in J_1)$, $V_2\subset Y_1\oplus E_2$  and $C^1$ diffeomorphisms 
	$\psi:\ U_1\rightarrow U_2 $, $\varphi:\ V_1\rightarrow V_2$  such that for all $x\in U_2$ there exists $(q,e)\in U_1$ such that $x=\psi(q,e)$ and
	\begin{equation}\label{composition:not_dependent}
	    (f_i)_{\in J_1}\circ\psi (q,e)=\varphi^{-1}(q,0).
	\end{equation} 
	By Local Representation Theorem \ref{theorem:local_representation}, there exists a function $\eta:\ U_1\rightarrow Y_2$ of class $C^1$ such that for all $(q,e)\in U_1$ 
	\begin{equation}\label{composition:implicit_function}
	    (f_i)_{i\in J_1} \circ \psi (q,e)=(q,\eta(q,e)) \in Y(J_1)
	    %\spanc ( b_i,\ i\in J_1).
	\end{equation}
	Let us put % $(q,\bar{e})\in U_1$ and let
	$(\bar{f}_i)_{i\in J_1}(q):=(q,\eta(q,e))=\varphi^{-1}(q,0)$ for $(q,e)\in U_1$, where $\varphi^{-1}(q,0)=(\varphi_i^{-1}(q,0))_{i\in J_1}$. 
	
%	Note that for any ${q}\in Y_1=Df(x_0)(E)$, $q_i:=b_i^*(Df(x_0)(e))=Df_i(x_0)(e)$, $i\in J_1$ and, by CRC $\spanc (Df_i(x)(e), i \in J_1)=\spanc (Df_i(x)(e), i \in J_2)=$.
	Then, by \eqref{composition:not_dependent} and \eqref{composition:implicit_function} we have that
	\begin{align*}
	    & q_i:=\bar{f}_i(q)=\varphi_i^{-1}(q,0)=f_i\circ \psi (q,e)=f_i(x),\quad i\in J_2
\end{align*}
and for any $l\in J_1\setminus J_2$,
\begin{align*}
	    &f_l(x)=f_l\circ \psi (q,e)=\bar{f}_l(q)=f_l\circ \psi (q,e)\\
	    & = b_l^*(f\circ\psi (q,e)) ) = b_l^*(q,\eta(q,e))\\
	    &= \eta_\ell(q,e)= \eta_\ell(q,\bar{e}).
	\end{align*}
%    	By \eqref{eq:representationY1}, $q=(q_i)_{i\in J_2}\in Y_1$ is fully determined by $q_i=f_i(x)$, $i\in J_2$ coefficients. 
    Therefore,
	\begin{equation*}
	  \eta_\ell(q,\bar{e})  =:h_l((f_i(x))_{i\in J_2}),\quad l\in J_1\setminus J_2,
	\end{equation*}
	where $h_l:\ Y(J_2)\rightarrow \mathbb{R}$.
	%, because the basis $(b_i)_{i\in J_1}$ is Besselian and Hilbertian.
	
\end{proof}

% \begin{corollary}\label{corollary:functional_dependence}
% Let $E$ be a Banach space and $g:U\subset E\rightarrow \ell_2$ be  a $C^{1}$ mapping, $g(x)=(g_i(x))_{i\in I_0\cup I_1}$, $x\in E$, $U\subset E$  an open set.
% Assume that $E=E_1 \oplus E_2,$ where $E_2=\text{ker}\, Dg(x_0)$. 
% %and  $(Dg_i(x_0))_{i\in A}$, $A\subset I_0\cup I_1$ forms a boundedly-complete basis for $E_1$.
% %:=((Dg_i(x_0))_{i\in A})(E)$.   	
% Assume that 
% %$Dg(x_0)$ has closed split image $Y_1$ with closed component $Y_2$  and that 
% bc-CRC holds for $(g_i(x))_{i\in I_0\cup I_1}$ at $x_0$ with $A\subset I_1\cup I_1$.
% %for all $x$ in some neighbourhood of $x_0$, $Dg(x)|E_1:\ E_1 \rightarrow Dg(x)(E)$ is an isomorphism. 
% Then there exists functions $h_l:\ \ell_2 \rightarrow \mathbb{R}$, $l\in I_0\cup I_1\setminus A$ of class $C^1$ such that
% \begin{equation*}
%     g_l(x)=h_l((g_i(x))_{i\in A})\quad \text{for all } x \text{ in some neighbourhood of } x_0.
% \end{equation*}
% \end{corollary}
% \begin{proof}
% The proof follows directly from Proposition \ref{proposition:CRC_and_isomorphism}  and Proposition \ref{proposition:functional_dependence}.
% \end{proof}

\section{Relaxed Constant Rank Constraint Qualification Plus}\label{section:Abadie_condition}

 \begin{definition}
\label{rcrcq_condition} Let  $(g_{i})_{i\in I_0\cup I_1}:\ E \rightarrow Y(I_0\cup I_1)$, $I_0\cup I_1\subset \mathbb{N}$, $I_0\cap I_1=\emptyset$ be of class $C^1$. 
We say that Relaxed Constant Rank Constraint Qualification Plus (RCRCQ+ in short) holds for set $\mathcal{ F}$, given by \eqref{set:constraints}, at 
%$g_i:\ E \rightarrow \mathbb{R}$, $i\in J$ at 
$x_0$ if 
there exists a neighbourhood $V(x_0)$ such that 
%such that for all $x\in V(x_0)$, 
for all $J$, $I_{0}\subset J\subset I_0\cup I(x_0)$,  
CRC+ holds for $(g_i)_{i\in J}$ at $x_0$ with neighbourhood $V(x_0)$, i.e., 
 %$I_0\subset J$ and for all 
%$(Dg_{i}(x_0))\subset (D g_i(x_0))$ 
 exists $J_2\subset J$  such that 
 \begin{enumerate}[label=\emph{\arabic*.}]
\item  $(Dg_i(x))_{i\in J_2}$ 
forms a Schauder basis for 
 $\spanc (D g_{i}(x), i \in J)$ for all $x\in V(x_0)$,
     \item for any $x\in V(x_0)$, there exists a topological isomorphism
	\begin{equation*}
	    z_x: \spanc (D g_{i}(x), i \in J) \rightarrow \spanc (D g_{i}(x_0), i \in J), 
	\end{equation*}
	such that $z_x(D g_{i}(x_0))= D g_{i}(x)$, $i \in J_2$,
	\end{enumerate}
	and additionally
\begin{enumerate}[label=\emph{\arabic*.}]%[label=\emph{(\roman*)}]
\setcounter{enumi}{2}
 \item $(Dg_i(x_0))_{i\in J_2}(E)$ is closed in $Y(J_2)$ defined in \eqref{def:Y(J)}, with $v_i=b_i$, $i\in J_2$,
    \item $(Dg_i(x_0))_{i\in J_2}$ forms a shrinking and boundedly-complete basis for $X_1:=\spanc (Dg_i(x_0),\ i\in J)$,
	    \item $(Dg_i(x_0))_{i\in J_2}$ is Besselian for $\spanc(Dg_i(x_0),\ i\in J)$,
	    \item\label{assumtion:RCRCQ6} $Dg_i(x_0)^*\in E$, $i\in J_2$.
	 \end{enumerate}
\end{definition}

% \begin{remark}
% Let us note that if $E$ is Banach space with Schauder basis $(e_i)_{i\in \mathbb{N}}$, then following implication holds:
% \begin{align*}
%   &((Dg_i(x_0))_{i\in I_0\cup I(x_0)})(E) \text{ is closed in } \ell_2(I_0 \cup I(x_0))\\
%   &\implies ((Dg_i(x_0))_{i\in J})(E) \text{ is closed in } \ell_2(J), \quad J\subset I_0\cup I(x_0)
% \end{align*}
% Indeed, let us take any $w^n\rightarrow w$, $ w^n \in ((Dg_i(x_0))_{i\in J})(E)$. Then $ w^n =((Dg_i(x_0))_{i\in J})(e)$ for some $e^n=(\tilde{e}_i^n)_{i\in \mathbb{N}}$.
% Let us put
% \begin{equation}
%     y_i^n=\left\{ \begin{array}{cl}
%          w_i^n & \text{for } i \in J, \\
%          0 &  \text{for } i \in (I_0\cup I(x_0))\setminus J.
%     \end{array}  \right. 
% \end{equation}
% Then $y_i^n \rightarrow y$, $y=(y_i)_{i\in I_0\cup I(x_0)}$, where
% \begin{equation}
%     y_i=\left\{ \begin{array}{cl}
%          w_i & \text{for } i \in J,  \\
%          0 &  \text{for } i \in (I_0\cup I(x_0))\setminus J.
%     \end{array}  \right. 
% \end{equation}
% Moreover, $y_n=Df(x_0)(\bar{e}^n)$, where
% \begin{equation*}
%     \bar{e}_i^n=\left\{ \begin{array}{cl}
%          \tilde{e}_i^n & \text{for } i \in J,  \\
%          0 &  \text{for } i \in (I_0\cup I(x_0))\setminus J.
%     \end{array}  \right. 
% \end{equation*}
% Since $((Dg_i(x_0))_{i\in I_0\cup I(x_0)})(E)\subset \ell_2(I_0 \cup I(x_0))$, and $y\in (Dg_i(x_0))_{i\in I_0\cup I(x_0)}$
% \end{remark}
\begin{remark}
Let us note that if there is no inequality constraints, i.e. $I_1=\emptyset$, then the Relaxed Constant Rank Constraint Qualification Plus for set $\mathcal{ F }$ at $x_0$ is equivalent to Constant Rank Condition Plus for $(g_i)_{i\in I_0}$ at $x_0$. 
\end{remark}

\begin{remark}
If $E$ is a reflexive space, then RCRCQ+ holds for set $\mathcal{ F}$ given by \eqref{set:constraints} takes the form:
there exists a neighbourhood $V(x_0)$ such that 
%such that for all $x\in V(x_0)$, 
for all $J$, $I_{0}\subset J\subset I_0\cup I(x_0)$,  
% $\ell_2(J)$
%  and
% CRC holds for $g_i$, $i\in J$ at $x_0$ with neighbourhood $V(x_0)$, i.e., 
 %$I_0\subset J$ and for all 
%$(Dg_{i}(x_0))\subset (D g_i(x_0))$ 
 exists $J_2\subset J$  such that 
 \begin{enumerate}[label=\emph{\arabic*.}]
 \item  $(Dg_i(x))_{i\in J_2}$ 
forms a Schauder basis for 
 $\spanc (D g_{i}(x), i \in J)$ for all $x\in V(x_0)$,
     \item for any $x\in V(x_0)$, there exists a topological isomorphism
	\begin{equation*}
	    z_x: \spanc (D g_{i}(x), i \in J) \rightarrow \spanc (D g_{i}(x_0), i \in J), 
	\end{equation*}
	such that $z_x(D g_{i}(x_0))= D g_{i}(x)$, $i \in J_2$,
	\end{enumerate}
and
\begin{enumerate}[label=\emph{\arabic*.}]
\setcounter{enumi}{2}
\item $(Dg_i(x_0))_{i\in J_2}(E)$ is closed in $Y(J_2)$ defined in \eqref{def:Y(J)}, with $v_i=b_i$, $i\in J_2$,
 %   \item $(Dg_i(x_0))_{i\in J_2}$ forms a   basis for $X_1:=\spanc (Dg_i(x_0),\ i\in J_1)$,
	    \item $(Dg_i(x_0))_{i\in J_2}$ is Besselian for $\spanc(Dg_i(x_0),\ i\in J)$.
	 \end{enumerate}

\end{remark}

Let us note that if $J\subset \mathbb{N}$ is finite, $|J|=n$ and $g_i:\ E \rightarrow \mathbb{R}$, $i\in J$ are of class $C^1$ in some neighbourhood of $x_0$ then $((Dg_i(x_0))_{i\in J})(E)$ is closed in $\mathbb{R}^n$.

\section{Main result}\label{section:main_result}

The following condition will be used in our main theorem.

\begin{enumerate}[label=(H\arabic*)]
    \item \label{assumption:lemma}  For all $d\in \Gamma_\mathcal{ F}(x_0)$ and for any vector function $r:\ (0,1)\rightarrow E$ such that $\|r(t)\|t^{-1}\rightarrow 0$, as $t\downarrow 0$, there exists a number $\varepsilon_0>0$ such that
	\begin{equation*}%\label{condition:nonactive}
	g_i(x_0+td+r(t))\leq 0 \text{ for all } i\in  I_1 \setminus I(x_0,d) \text{ and for all } t\in (0,\varepsilon_0),
	\end{equation*} 
	where $I(x_0,d):=\{ i\in I(x_0)\mid \langle D g_i(x_0) \, , \,  d \rangle =0 \}$.
\end{enumerate}

Note that if for some $d\in \Gamma_\mathcal{ F}(x_0)$, set $I_1\setminus I(x_0,d)$ is finite, then the Condition \ref{assumption:lemma} is satisfied, cf. \cite[Lemma 6.4]{MR4223867}.
 
 The following examples illustrates condition \ref{assumption:lemma}.
 \begin{example}\label{example:assumption:lemma}
        Let $h:\ \mathbb{R}\rightarrow \hilbertX$, $\hilbertX$ is  sequence space $\ell_p$, $p\geq 1$, $h_i(x)=a_i x^2 + b_i x + c_i$, $i=1,\dots$ where $a_i\leq 0$, $i=1,\dots$ and
        \begin{equation*}
            c_i=\left\{ \begin{array}{rl}
                 0, & i - \text{even}, \\
                <0, & i - \text{odd}
            \end{array} \right., \quad i=1,2, \dots
        \end{equation*}
        Let $\mathcal{ F}:= \{ x \in \mathbb{R} \mid  h(x)\in \hilbertX_{-} \}$, where $h(x)=(h_i(x))_{i\in I_0\cup I_1}$ and $x_0=0$. 
        Then $I(x_0)=\{2k,\ k\in \mathbb{N}\}$. Assume that $b_i> 0$, $i=1,\ldots$.
        Then
        \begin{align*}
            &\Gamma_\mathcal{ F}(0)=\{ d\in \mathbb{R} \mid b_i \cdot d \leq 0,\ i\in I(x_0)\} = \mathbb{R}_{-}\\
            & \mathcal{ T}_\mathcal{ F}(0)=\{ d \in \mathbb{R} \mid \exists r(t),\ r(t)/t\rightarrow 0^+ \ \exists \varepsilon_0>0\ \forall t\in [0,\varepsilon_0) \quad  h(x_0+td+r(t))\in \hilbertX_{-}\}.%= \mathbb{R}_{-},
        \end{align*}
        %i.e. Abadie condition at $x_0$ holds.
        Let us take $d=-1$. %We will show that $-1 \in  \mathcal{ T}_\mathcal{ F}(0)$. 
        We have 
        \begin{align*}
            &a_i(-t+r(t))^2+b_i(-t+r(t))+c_i\leq 0, \quad i=1,\ldots\\
            & \iff t^2 a_i(-1+\frac{r(t)}{t})^2+tb_i(-1+\frac{r(t)}{t})+c_i \leq 0, \quad i=1,\ldots.
        \end{align*}
        Since $a_i,c_i\leq 0$ and $b_i\geq 0$, $i=1,\dots$ we obtain that $-1 \in \mathcal{ T}_\mathcal{ F}(0)$. Since $\mathcal{ T}_\mathcal{ F}(0)\subset \Gamma_\mathcal{ F}(0)$ we obtain that Abadie condition holds, i.e., $\mathcal{ T}_\mathcal{ F}(0)= \Gamma_\mathcal{ F}(0)$. Let us also prove that $h_i$ also satisfy condition \ref{assumption:lemma}.
        Indeed, if $i\in I(0) \setminus I(0;d)$, we have $b_i \cdot d <0.$ So inequality     $$
        h_i(td+r(t)) = a_i(td+r(t))^2 + t b_i ( d + \frac{r(t)}{t}) <0
        $$
        is satisfied for any function $r(t)$ s.t. $r(t)/t\rightarrow 0^+$ for $t\in (0, \varepsilon_0)$ for some $\varepsilon_0$. 
\end{example}

Note that condition PMFCQ from \cite{MR3070104} is not satisfied for functions in Example \ref{example:assumption:lemma}.

In the following example RCRCQ+ and Abadie conditions are not satisfied. 
\begin{example}
Let $g_j:\ \mathbb{R}^2 \rightarrow \mathbb{R}$, $j\in \mathbb{N}$ be defined by
\begin{align*}
    & g_1(x_1,x_2)=-x_1,\\
    & g_2(x_1,x_2)=-x_2;\\
    &g_j(x_1,x_2):=jx_1x_2, \forall (x_1,x_2)\in \mathbb{R}^2,\quad \forall j=3,4,\dots.
\end{align*}
Functions $g_j:\ \mathbb{R}^2 \rightarrow \mathbb{R}$, $j\in \mathbb{N}$ are differentiable, and
\begin{align*}
\mathcal{ F}:&= \{  (x_1,x_2)\in \mathbb{R}^2\mid g_j(x_1,x_2)\leq 0,\  j\in \mathbb{N}\}\\
&=\{  (x_1,x_2)\in \mathbb{R}^2\mid  x_1=0, x_2\geq 0\} \cup \{  (x_1,x_2)\in \mathbb{R}^2\mid  x_1\geq 0, x_2= 0\}.
\end{align*}
    Then 
    \begin{equation*}
        \mathcal{ T}_\mathcal{ F}(0,0)=\{ (u_1,u_2)\in \mathbb{R}^2 \mid u_1\geq 0,\ u_2\geq 0,\ u_1\cdot u_2 =0 \}
    \end{equation*}
    and
    \begin{equation*}
        \Gamma_\mathcal{ F}(0,0)=\mathbb{R}_+^2.
    \end{equation*}
    Hence Abadie condition does not hold. Moreover,
    RCRCQ+ does not hold, since
    \begin{align*}
        (Dg_i(0,0))_{i\in \{1,3\}}\quad  \text{is not isomorphic to}  \quad (Dg_i(x_1,x_2))_{i\in \{1,3\}}
    \end{align*}
    for any $(x_1,x_2)$ s.t. $x_2\neq 0$. Note that for any $d\in \Gamma_\mathcal{ F}(0,0)$, $I_1 \setminus I((0,0),d)$ is a finite set, thus it is easy to check that condition \ref{assumption:lemma} is satisfied.
\end{example}

\begin{theorem}(Ljusternik Theorem)\label{theorem:Ljusternik}
	Let $X$ and $Y$ be Banach spaces, let $U$ be a
	neighborhood of a point $x_0\in X$, and let $f:\ U \rightarrow Y$ be a Fr\'echet differentiable mapping. Assume that $f$ is regular at $x_0$, i.e., that
	$\text{Im}\, Df(x_0)= Y$,
	and that its derivative is continuous at this point (in the uniform operator
	topology of the space $\Gamma(X, Y)$). Then the tangent space  $\mathcal{ T}_M(x_0)$ to the set
	\begin{equation*}
		M =
		\{x \in U \mid f(x) =
			f(x_0)\}
	\end{equation*}
	at the point $x_0$ coincides with the kernel of the operator $Df(x_0)$,
	\begin{equation}
	\label{lust1}
	\mathcal{ T}_M(x_0) =
	\ker\, Df(x_0).
	\end{equation}
	Moreover, if the assumptions of the theorem are satisfied, then there exist a
	neighborhood $U'\subset U$ of the point $x_0$, a number $K>0$, and a mapping
	$\xi \rightarrow x (\xi)$ of the set $U'$ into $X$ such that
	\begin{align}	\label{lust2}
	\begin{aligned}
& f(\xi + x(\xi))= f(x_0),\\
& \|x(\xi)\|\leq K \|f(\xi) - f(x_0)\|
		\end{aligned}
	\end{align}
	for all $\xi \in U'$.

\end{theorem}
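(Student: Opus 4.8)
The plan is to establish the quantitative statement \eqref{lust2} first, by a modified Newton (successive approximation) scheme, and then to read off the tangent-cone identity \eqref{lust1} from it together with the first-order expansion of $f$ at $x_0$.

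First I would invoke the open mapping theorem: since $A:=Df(x_0):X\to Y$ is a bounded linear surjection between Banach spaces, it is open, so there is a constant $c>0$ such that every $y\in Y$ admits some $x\in X$ with $Ax=y$ and $\|x\|\le c\|y\|$ (a right inverse with controlled norm). Using the continuity of $Df$ at $x_0$ in operator norm, fix $\delta>0$ with $\|Df(x)-A\|\le \tfrac{1}{2c}$ whenever $\|x-x_0\|<\delta$; shrinking $\delta$ we may also assume $\|f(x)-f(x_0)\|<\tfrac{\delta}{4c}$ on this ball, and set $U':=\{\xi\in X\mid \|\xi-x_0\|<\delta/2\}$. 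For $\xi\in U'$ define $h_0=0$ and, inductively, $h_{n+1}=h_n-z_n$ where $z_n$ solves $Az_n=f(\xi+h_n)-f(x_0)$ with $\|z_n\|\le c\|f(\xi+h_n)-f(x_0)\|$. Since $A(h_{n+1}-h_n)=-(f(\xi+h_n)-f(x_0))$, the mean-value inequality on the segment $[\xi+h_n,\xi+h_{n+1}]$ (which, as I will check, stays inside the $\delta$-ball) gives $\|f(\xi+h_{n+1})-f(x_0)\|=\|f(\xi+h_{n+1})-f(\xi+h_n)-A(h_{n+1}-h_n)\|\le \tfrac{1}{2c}\|z_n\|\le \tfrac12\|f(\xi+h_n)-f(x_0)\|$. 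Hence $\|f(\xi+h_n)-f(x_0)\|\le 2^{-n}\|f(\xi)-f(x_0)\|$ and $\|h_{n+1}-h_n\|=\|z_n\|\le c\,2^{-n}\|f(\xi)-f(x_0)\|$; in particular $\|h_n\|\le 2c\|f(\xi)-f(x_0)\|<\delta/2$ for every $n$, so $\xi+h_n$ remains in the $\delta$-ball and the induction is self-consistent. Therefore $(h_n)$ is Cauchy; its limit $x(\xi)$ satisfies $\|x(\xi)\|\le 2c\|f(\xi)-f(x_0)\|$, and by continuity of $f$ we get $f(\xi+x(\xi))=\lim_n f(\xi+h_n)=f(x_0)$. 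This proves \eqref{lust2} with $K=2c$.

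Part \eqref{lust1} then follows quickly. The inclusion $\mathcal{T}_M(x_0)\subseteq\ker Df(x_0)$ needs no regularity: if $x_k\in M$, $t_k\downarrow 0$, $(x_k-x_0)/t_k\to d$, then $0=f(x_k)-f(x_0)=A(x_k-x_0)+r(x_k)$ with $\|r(x_k)\|/\|x_k-x_0\|\to 0$; dividing by $t_k$, using that $\|x_k-x_0\|/t_k$ stays bounded, and letting $k\to\infty$ yields $Ad=0$. Conversely, for $d\in\ker A$ put $\xi_t:=x_0+td$; then $f(\xi_t)-f(x_0)=A(td)+o(t)=o(t)$, so for small $t>0$ we have $\xi_t\in U'$ and, by \eqref{lust2}, $x_t:=\xi_t+x(\xi_t)\in M$ with $\|x(\xi_t)\|\le K\|f(\xi_t)-f(x_0)\|=o(t)$; hence $(x_t-x_0)/t=d+x(\xi_t)/t\to d$, so $d\in\mathcal{T}_M(x_0)$.

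The main obstacle is the construction in \eqref{lust2}: one must set up the successive-approximation scheme so that it actually contracts and, crucially, verify that every iterate stays in the ball on which $\|Df-A\|$ is small — this is where the choice of $\delta$, balancing the contraction constant $\tfrac{1}{2c}$ coming from the open mapping theorem against the size of $f(\xi)-f(x_0)$, has to be made with care. Once the quantitative estimate \eqref{lust2} is available, the tangent-cone identity \eqref{lust1} is an essentially immediate consequence of the first-order expansion of $f$ in the direction $d$.
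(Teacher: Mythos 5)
The paper itself offers no proof of this theorem: it is quoted as a classical result with a pointer to the literature, and the surrounding remark even takes the opposite logical route to yours, presenting \eqref{lust2} as a consequence of \eqref{lust1}. Your argument is the standard Graves--Ljusternik successive-approximation proof, and its architecture is sound: a right inverse with norm bound $c$ from the open mapping theorem, a contraction of ratio $\tfrac12$ obtained from $\|Df(x)-Df(x_0)\|\le\tfrac{1}{2c}$ via the mean value inequality applied to $f-Df(x_0)$, summation of the geometric series to get $K=2c$, and then both inclusions of \eqref{lust1} read off from the first-order expansion together with \eqref{lust2}. The induction keeping the iterates in the good ball, the Cauchy estimate, and the two tangent-cone inclusions are all correct.

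The one step that fails as written is ``shrinking $\delta$ we may also assume $\|f(x)-f(x_0)\|<\tfrac{\delta}{4c}$ on this ball.'' That requirement is self-referential in $\delta$ and cannot be met: from $\|y\|=\|Df(x_0)x\|\le\|Df(x_0)\|\,c\,\|y\|$ one gets $\|Df(x_0)\|\ge 1/c$, so on the ball of radius $\delta$ about $x_0$ the displacement $\|f(x)-f(x_0)\|$ reaches values of order $\delta/c$, which exceeds $\delta/(4c)$; both sides of your inequality scale linearly in $\delta$, so no amount of shrinking helps. The repair is immediate and leaves everything downstream untouched: fix $\delta$ once so that $\|Df(x)-Df(x_0)\|\le\tfrac{1}{2c}$ for $\|x-x_0\|<\delta$, and then use continuity of $f$ at $x_0$ to pick a \emph{separate, possibly much smaller} radius $\delta''\le\delta/2$ with $\|f(\xi)-f(x_0)\|<\tfrac{\delta}{4c}$ whenever $\|\xi-x_0\|<\delta''$; take $U'$ to be that smaller ball. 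Then $\|h_n\|\le 2c\|f(\xi)-f(x_0)\|<\delta/2$ still confines every iterate $\xi+h_n$ to the $\delta$-ball where the contraction estimate is valid, and the rest of your proof, including the deduction of \eqref{lust1}, goes through verbatim.
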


Assertion \eqref{lust2} follows from \eqref{lust1}, see e.g. \cite{theory_of_external_problems_Ioffe}. The assertion \eqref{lust2} is sometimes called the {\em generalized Ljusternik Theorem}, see e.g. \cite{MR601755}

Now we are ready to establish our main theorem.
\begin{theorem}\label{theorem:tangent_cone} 
Let $E$ be a %reflexive 
	Banach space, $F$ be a Hilbert space and assume that $(b_i)_{i\in \mathbb{N}}$ is a Besselian and Hilbertian basis of $F$. Let $\mathcal{ F}\subset E$ be given as in \eqref{set:constraints}.  
	%Assume that 
	%$E=E_1 \oplus E_2,$ where $E_2=\text{ker}\, DG(x_0)$,
%$F_1:=DG(x_0)(E)$ is a closed set.

Assume that
\begin{enumerate} [label=\emph{(\roman*)}]
\item RCRCQ+ holds for $\mathcal{ F}$ at $x_0\in \mathcal{ F}$% with a neigbourhood $V(x_0)$, 
%\item[{\em (ii)}]  $Dg_i(x)$, $i\in I_0\cup I(x_0)$ is a boundedly-complete sequence for all $x\in V(x_0)$,
%and $(Dg_i(x_0)_{i \in I_0 \cup I(x_0)})(E)$ is closed. 
\item condition \ref{assumption:lemma} is satisfied at $x_0$. 
\end{enumerate}

Then Abadie condition holds, i.e., $\Gamma_\mathcal{ F}(x_0)=\mathcal{ T}_\mathcal{ F}(x_0)$.
	
	Moreover, for each $d\in \mathcal{ T}_\mathcal{ F}(x_{0})$ there is a vector function $r:\ (0,1)\rightarrow E$, $\|r(t)\|/t\rightarrow 0$ when $t\downarrow 0$, such that for all $t$ sufficiently small \begin{equation} 
		\begin{array}{l}
		g_i(x_{0}+td+r(t))=0,\ i\in J(d),\\
		g_{\ell}(x_{0}+td+r(t))\le 0,\ \ell \in I_1\setminus J(d),
		\end{array} \quad J(d):=I_{0}\cup I(x_{0},d).
		\end{equation} 
	
% 	 Additionally, whenever $J(d)\neq \emptyset$,   $d\in\text{ker } DG(x_{0}) $,
% 		where $G:=[G_{i_j}]$, $j\in \mathbb{N}$,  $$	\rank \{D g_i(x_0+td+r(t)), i\in J(d) \}=\rank \{D g_i(x_0), i\in J(d) \}=k$$ for all $t$ sufficiently small.
 
%	
%  and	$	d\in\text{ker } Dh(x_{0}) $ 	
%	where $h:=[h_{i_j}]$, $j=1,\dots,k$,  $	\rank \{D h_i(x_0+td+r(t)), i\in J(d) \}=\rank \{D h_i(x_0), i\in J(d) \}=k$.

	% and
	%$I(x_0,d)=\{ i\in I(x_0)\, , \, \langle D h_i(x_0) \, , \,  d \rangle =0 \}$ and
	% $\ell\in I\setminus I(x_{0})$.
	%	 and $\Lambda(x_0)\neq \emptyset$ for any continuously differentiable function $f$ having a local minimum on $F$ at the point $x_0\in C$.
\end{theorem}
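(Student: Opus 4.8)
The plan is to establish the two inclusions $\Gamma_{\mathcal F}(x_0)\subseteq\mathcal T_{\mathcal F}(x_0)$ and $\mathcal T_{\mathcal F}(x_0)\subseteq\Gamma_{\mathcal F}(x_0)$ separately, with the nontrivial direction being the first one, and to produce the curve $r(\cdot)$ along the way. The reverse inclusion $\mathcal T_{\mathcal F}(x_0)\subseteq\Gamma_{\mathcal F}(x_0)$ is the standard one and follows from Proposition \ref{proposition:representation_linearized}: any tangent direction $d$ has an approximating sequence $x_k\to x_0$ in $\mathcal F$ with $(x_k-x_0)/t_k\to d$, and passing to the limit in $g_i(x_k)\le 0$ (resp.\ $=0$) after dividing by $t_k$ gives $Dg_i(x_0)d\le 0$ for $i\in I(x_0)$ and $=0$ for $i\in I_0$, i.e.\ $d\in\Gamma_{\mathcal F}(x_0)$.

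For the main inclusion, fix $d\in\Gamma_{\mathcal F}(x_0)$ and set $J:=J(d)=I_0\cup I(x_0,d)$, so that $I_0\subseteq J\subseteq I_0\cup I(x_0)$ and $Dg_i(x_0)d=0$ for all $i\in J$. The idea is to ``freeze'' the active-and-binding constraints indexed by $J$ and solve them exactly using Ljusternik's theorem, while controlling the remaining inequalities via hypothesis \ref{assumption:lemma}. First I would apply RCRCQ+ with this $J$ to obtain $J_2\subseteq J$ and the neighbourhood $V(x_0)$ so that CRC+ holds for $(g_i)_{i\in J}$ at $x_0$. Then Proposition \ref{proposition:functional_dependence} gives $C^1$ functions $h_\ell$, $\ell\in J\setminus J_2$, with $g_\ell(x)=h_\ell((g_i(x))_{i\in J_2})$ near $x_0$; consequently the exact system $g_i(x)=0$, $i\in J$, is locally equivalent to the reduced system $g_i(x)=0$, $i\in J_2$. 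Write $g^{J_2}:=\sum_{i\in J_2}g_i(\cdot)b_i:E\to Y(J_2)$. By Proposition \ref{proposition:E1_representation} (applicable since CRC+ carries its hypotheses \ref{assumption:Aplus}--\ref{assumption:Cplus}) we get $E=E_1\oplus E_2$ with $E_2=\ker Dg^{J_2}(x_0)$, and by Proposition \ref{proposition:CRC_and_isomorphism} the map $Dg^{J_2}(x_0):E\to (Dg_i(x_0))_{i\in J_2}(E)$ is surjective with $(Dg_i(x_0))_{i\in J_2}(E)$ closed, hence $Dg^{J_2}$ is regular at $x_0$ in the sense of Theorem \ref{theorem:Ljusternik} (onto its closed image, which is itself a Banach space, so Ljusternik applies with $Y=\overline{(Dg_i(x_0))_{i\in J_2}}(E)$). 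Note $d\in\ker Dg^{J_2}(x_0)=E_2$ since $Dg_i(x_0)d=0$ for $i\in J_2\subseteq J$.

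Now apply the generalized Ljusternik Theorem \ref{theorem:Ljusternik} to $g^{J_2}$ at $x_0$: for $\xi=\xi(t):=x_0+td$ with $t$ small, $\xi(t)\in U'$ and $g^{J_2}(\xi(t))-g^{J_2}(x_0)=g^{J_2}(x_0+td)\to 0$, indeed $\|g^{J_2}(x_0+td)\|=o(t)$ because $Dg^{J_2}(x_0)d=0$ and $g^{J_2}$ is $C^1$. Ljusternik yields $x(t)\in E$ with $g^{J_2}(x_0+td+x(t))=g^{J_2}(x_0)=0$ and $\|x(t)\|\le K\|g^{J_2}(x_0+td)\|=o(t)$. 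Set $r(t):=x(t)$; then $\|r(t)\|/t\to 0$. By the functional-dependence relation, $g_\ell(x_0+td+r(t))=h_\ell(0)=g_\ell(x_0)=0$ for $\ell\in J\setminus J_2$ as well (since $x_0\in\mathcal F$ and $i\in J\Rightarrow g_i(x_0)=0$), so $g_i(x_0+td+r(t))=0$ for all $i\in J=J(d)$. It remains to check the inequalities $g_\ell(x_0+td+r(t))\le 0$ for $\ell\in I_1\setminus J(d)$: split these into $\ell\in I(x_0)\setminus I(x_0,d)$, where $Dg_\ell(x_0)d<0$ forces the value negative for small $t$ by a first-order expansion using $\|r(t)\|=o(t)$, and $\ell\in I_1\setminus I(x_0)$, where $g_\ell(x_0)<0$ gives the estimate by continuity for the finitely-behaving part — precisely the content of hypothesis \ref{assumption:lemma}, which handles the possibly infinite index set uniformly. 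This shows $d\in\mathcal T_{\mathcal F}(x_0)$ via the sequence $x_k=x_0+t_kd+r(t_k)$ for any $t_k\downarrow 0$, and simultaneously delivers the ``moreover'' statement.

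The main obstacle I anticipate is the bookkeeping around $Y(J_2)$ as the target Banach space for Ljusternik: one must verify that $(Dg_i(x_0))_{i\in J_2}(E)$ is genuinely a Banach space in the $Y(J_2)$-norm (closedness from CRC+ item \ref{item:CRC:2plus}), that $g^{J_2}$ really maps into $Y(J_2)$ with $C^1$ regularity inherited from $(f_i)_{i\in J_2}$ being $C^1$ into $Y(J_2)$, and that the Fréchet derivative computed coefficientwise (Fact \ref{fact:representation_bstar}) agrees with $Dg^{J_2}(x_0)$ so that ``regular'' in Ljusternik's sense matches the surjectivity onto the closed image supplied by Proposition \ref{proposition:CRC_and_isomorphism}. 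A secondary subtlety is ensuring the neighbourhoods from RCRCQ+, from Proposition \ref{proposition:functional_dependence}, and from Ljusternik can be intersected into a single one valid for all small $t$, and that the functional-dependence identity is available on that common neighbourhood.
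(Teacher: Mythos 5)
Your proposal is correct and follows essentially the same route as the paper: apply RCRCQ+ with $J=J(d)$ to extract $J_2$, use Ljusternik on the reduced map $(g_i)_{i\in J_2}$ to build $r(t)$ with $\|r(t)\|/t\to 0$ solving the equalities, recover the remaining equalities $\ell\in J\setminus J_2$ via the functional dependence of Proposition \ref{proposition:functional_dependence}, and dispose of the inactive inequalities by condition \ref{assumption:lemma}. The only cosmetic difference is that you invoke the generalized Ljusternik estimate \eqref{lust2} along $\xi(t)=x_0+td$ to produce $r(t)$ explicitly, where the paper passes through the tangent-cone identity \eqref{lust1} for the set $M$; these are equivalent, and your remark that Ljusternik should be applied with target the closed image rather than all of $Y(J_2)$ is a sound precaution.
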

\begin{proof}
	The inclusion $\mathcal{ T}_\mathcal{ F}(x_0)\subset \Gamma_\mathcal{ F}(x_0)$ is immediate. To see the converse, take any $d\in \Gamma_\mathcal{ F}(x_0)$, where by Proposition \ref{proposition:representation_linearized},
	\begin{align*} 
\Gamma_{\mathcal{ F}}(x_{0})%&=\{d\in E \mid \langle DG(x_{0})\mid d\rangle\in \mathcal{ T}_{K}(G(x_{0}))\}\\
\subset \{d\in E \mid \langle Dg_{i}(x_{0})\mid d\rangle=0,\ i\in I_0,\ \langle Dg_{i}(x_{0})\mid d\rangle\le 0,\ \ i\in I(x_{0})\}.
\end{align*}
Recall that $I(x):=\{ i\in I_1 \mid g_i(x)=0 \}$.
	
		We start by  considering the case $J:=J(d)\neq  \emptyset$, $|J|=+\infty$.
	
	By RCRCQ+ of $\mathcal{ F}$ at $x_0$, % and by Remark \ref{remark:boundedlycompletesubseq},
	there exist $V(x_0)$ and $J_2\subset J$ such that $(Dg_i(x_0))_{i\in J_2}$ forms a boundedly-complete and shrinking basis for $\spanc (Dg_i(x_0),i\in J)$ %for all $x\in V(x_0)$ 
	and there exists a topological isomorphism $z$
	\begin{equation*}
		z_{t,r}:\ \spanc ( Dg_i(x_0+td+r), i\in J )\rightarrow \spanc (D g_i(x_0), i\in J ),
	\end{equation*}
	for $(t,r)$ in some neighbourhood of $(0,0)\in \mathbb{R}\times E$ such that
	$z_{t,r}(Dg_i(x_0+td+r))=Dg_i(x_0), i \in J_2$ for all $(t,r)\in \mathbb{R}\times E$ such that $x_0+td+r\in V(x_0)$. 
	
%	There exists index set $J_2\subset J$ such that bc-CRC holds for $I_1=J_2$.

	%By Proposition \ref{proposition:functional_dependence},   there  exist indices $J_2\subset J$, such that $D h_{i}(x_0+td+r)$, $i\in J_2$  are {\color{red}linearly independent} for $(t,r)$ in some neighbourhood of $(0,0)$. 
	
%	 $(Dh_i(x_0))_{i\in J_2}$ forms a boundedly-complete basis for $[(Dh_i(x_0))_{i\in J}]$ 
	
	%Without loss of generality, we can assume that $i_j=j$, $j=1,\dots,k$. 
	
	%By Proposition \ref{proposition:CRC_and_isomorphism}, 
% %	Let us observe that, by RCRCQ, 
% 	the derivative {\color{red} $(Dg_i(x_0))_{i\in J_2}| E_1 :\ E_1 \rightarrow (Dg_i(x_0))_{i\in J_2}(E)$ ?? is this a Banach space (Yes - it is linear and closed by RCRCQ condition -  something wrong)} is onto.
	
	Let $f:E\rightarrow Y(J_2)$ be defined as $f(x):=(g_i(x))_{i\in J_2}$, where $Y(J_2)$ is defined by \eqref{def:Y(J)}. By \eqref{eq:coefficients}, the derivative $Df(x_0)$ is onto $Y(J_2)$.
	% Note that Df_i^*(x_0) is Hilbertian

    Let us define 	\begin{equation}\label{set:M}
	M := \{ x\in E \mid g_{i}(x)=g_i(x_0)=0,\ i\in J_2  \}.
	\end{equation}  
    By  Ljusternik Theorem  \ref{theorem:Ljusternik} applied to the set $M$,
    \begin{equation*}
        \ker Dg_i(x_0)_{i\in J_2} = \mathcal{ T}_M(x_0).
    \end{equation*}
     By applying Ljusternik Theorem \ref{theorem:Ljusternik} with $f$ at $x_0$, we obtain that $d\in \mathcal{ T}_{M}(x_0)$. 
    	\begin{enumerate}
 	    \item[Case 1]  
     If $J_2=J$, then $g_i(x_0+td+r(t))=0$, $i\in J$ for $t\in [0,\varepsilon]$, where $\varepsilon>0$ and $r(t)$ is given by Ljusternik Theorem.

% 	We will consider two sub-cases $J_2=J$ and $J_2\subsetneq J$.

% 	If $J_2=J$,
% 	%i.e. $J\setminus J_2 =\emptyset$,
% 	then, by applying Ljusternik Theorem  \ref{theorem:Ljusternik} to set M with given by \eqref{set:M} with $Y=\ell_2(J_2)$.
% % 	\begin{equation*}
% % 	M := \{ x\in E \mid g_{i}(x)=g_i(x_0)=0,\ i\in J  \},
% % 	\end{equation*} 
% 	we obtain  $d\in \mathcal{ T}_M(x_0)=\ker Dg_i(x_0)_{i\in J}$.
% 	%{\color{red}the conclusion holds}. 
	
	\item[Case 2]
	If $J_2\subsetneq J$ then, by Proposition \ref{proposition:functional_dependence}, applied to $g_i$, $i\in J$, there exist  functions $h_l$, $l\in J\setminus J_2$ of class $C^1$, such that
	\begin{equation}\label{eq:implicit}
	g_l(x_0+td+r)=h_l((g_i(x_0+td+r))_{i\in J_2}),
	\end{equation} 
	for $(t,r)$ in some neighbourhood of  $(0,0)\subset \mathbb{R}\times E$.
	
	Consider the system
	\begin{equation}\label{eq:system_1}
	g_i(x_0+td+r)=0,\quad i\in J
	\end{equation} 
	with respect to variables $t,r$.
	Let us note that  system \eqref{eq:system_1} is satisfied for $(t,r)=(0,0).$ 
	
	Obviously, by Proposition \ref{proposition:functional_dependence}, in some neighbourhood of $(0,0)$,  system \eqref{eq:system_1} is equivalent to
	\begin{equation}\label{eq:system_2}%\left\{
	g_i(x_0+td+r)=0, \quad i\in J_2 
%	\begin{array}{c}
%	h_1(x_0+td+r)=0\\
%	\dots\\
%	h_k(x_0+td+r)=0
%	\end{array}
	%\right.
	\end{equation}
	with additional condition
	\begin{equation}\label{eq:dependence}
	g_l(x_0+td+r)=h_l((g_i(x_0+td+r))_{i\in J_2})=0,\ l\in J\setminus J_2.
	\end{equation}
	Note that $h_l((g_i(x_0))_{i\in J_2})=0$, $l\in J\setminus J_2$ since $g_l(x_0)=0=h_l((g_i(x_0))_{i\in J_2})=h_l(0)$.
	%and therefore $g_l(0)=0$, $l\in J\setminus J_2$.
	
% 	We have 
% 	\begin{align*}
% 		\langle D g_i(x_0) \mid d \rangle = 0,\quad  i\in J=I_0\cup I(x_0,d). 
% 	\end{align*}
	
% 	%	We consider the following cases.
% 	%	\begin{enumerate}
% 	%		\item 
% 	Hence,  $d\in \ker Df(x_0)$
% % 	, where
% % 	\begin{equation*}
% % 	M:=\{ x \in \hilbertH \mid g(x)=0 \}.
% % 	\end{equation*}

 	\end{enumerate}
 In both cases there exist $\varepsilon>0$ and a function $r:\ [0,\varepsilon)\rightarrow E$, $\|r(t)\|t^{-1}\rightarrow 0$, $t\downarrow 0,$ such that 
	\begin{equation*}
    g_i(x_0+td+r(t))=0, \quad i\in J,
% 	\left\{
% 		\begin{array}{c}
% 			h_1(x_0+td+r(t))=0\\
% 			\dots\\
% 			h_k(x_0+td+r(t))=0.
% 		\end{array}\right.
	\end{equation*}
%In Case 2, by \eqref{eq:dependence}, $g_i(x_0+td+r(t))=0$, $i\in J$ for $t\in [0,\varepsilon]$.
	i.e., 
	\begin{equation}\label{set:Ma}
	d\in \mathcal{ T}_{\tilde{M}}(x_0),\quad  \text{where}\ \tilde{M} := \{ x\in E \mid g_{i}(x)=g_i(x_0)=0,\ i\in J  \}.
	\end{equation}  

	By condition \ref{assumption:lemma}, there exists $\varepsilon_0>0$ such that
	\begin{equation*}%\label{condition:nonactive}
	g_i(x_0+td+r(t))\leq 0 \text{ for all } i\in (I_0\cup I_1)\setminus I(x_0,d) \text{ and for all } t\in (0,\varepsilon_0),
	\end{equation*} 
	therefore
	\begin{equation}
	x_0+td+r(t)\in \mathcal{ F}\quad t\in [0,\min\{\varepsilon_0,\varepsilon\}].
	\end{equation}
	Thus, $d\in \mathcal{ T}_\mathcal{ F}(x_0)$.
	
	Now, let us consider the  case $J=\emptyset$ (i.e. the case when both $I_0=\emptyset$ and $I(x_0,d)=\emptyset$). Then, by condition \ref{assumption:lemma}, for any vector function $r:\ (0,1)\rightarrow E$, $\|r(t)\|/t\rightarrow 0$ when $t\downarrow 0$ there exists $\varepsilon>0$ such that
	\begin{equation}
	x_0+td+r(t)\in \mathcal{ F}\quad t\in [0,\varepsilon],
	\end{equation}
	i.e.,  $d\in \mathcal{ T}_\mathcal{ F}(x_0)$. 
	%
	%		\item  $d\notin \ker h^\prime(x_0)$, where $h_1(x)=[h_1(x)\dots,h_k(x))]$. Since $r\in \hilbertH=(\hilbertH_1\oplus \hilbertH_2)$, let $r=[r_1,r_2]$, where $r_1\in \hilbertH_1$, $r_2\in \hilbertH_2$. By Lyusternik Theorem applied to $h$ at $x_0$, we have that there exists a neighbourhood $U^\prime$ of $x_0$ and a function $x(\xi)$ such that
	%		\begin{equation*}
	%		h_i(\xi +x(\xi))=0,\ i=1,\dots,k\quad \text{for all }\xi \in U^\prime.
	%		\end{equation*}
	%		Then there exists $\varepsilon>0$ such that $x_0+td\in U^\prime$ for all $t\in [0,\varepsilon]$. Let $r(t)=x(x_0+td)$, $t\in [0,\varepsilon]$.
	%	\end{enumerate}
		
\end{proof} 

\section{RCRCQ+ and Lagrange multipliers}\label{section:lagrange_multipliers}

In this section, using \cite{MR4159570} and RCRCQ+ condition we will prove non-emptiness of the Lagrange multipliers set. 

The \textit{Lagrange function} or \textit{Lagrangian} corresponding to problem \eqref{problem:P} is a function $L:\ E \times F\rightarrow \mathbb{R}$
\begin{equation*}
    L(x,\lambda):=f(x)+\langle \lambda \mid G(x) \rangle \quad x\in E,\ \lambda \in F.
\end{equation*}
\begin{definition}
Let $Q\subset F$ be a closed, convex set. The normal cone to $Q$ at $\bar{y} \in F$ is the set
\begin{equation*}
    \coneN_Q(\bar{y})= \{ y\in F \mid \langle y \mid k-\bar{y}  \rangle \leq 0 \ \forall k\in Q  \}.
\end{equation*}
% $\coneN_{K_{\{b_i\}}}(G(x_0))$ is normal cone to $K_{\{b_i\}}$ at $G(x_0)$ defined as
% \begin{equation*}
%     \coneN_{K_{\{b_i\}}}(y):= \{ \tilde{y}\in F \mid \langle \tilde{y}   \mid y - k \rangle \leq 0  \quad \forall k \in K_{\{b_i\}}\}.  
% \end{equation*}
\end{definition}

\begin{definition}
A feasible point $\bar{x}\in \mathcal{ F}$ of \eqref{problem:P} is called a KKT point if 
 there exists $\bar{\lambda}\in \mathcal{ N}_{K}(G(\bar{x}))$ such that
\begin{equation*}
    D_{\bar{x}}L(\bar{x},\bar{\lambda})=0,
\end{equation*}
where $K$ is defined by \eqref{cone:K}. 
In this case $\bar{\lambda}$ is called \textit{Lagrange multiplier} of \eqref{problem:P} at $\bar{x}\in \mathcal{ F}$
\end{definition}

Following \cite{MR4159570,MR3204130,Kurcyusz1976} let us define set corresponding to KKT points,
\begin{equation*}
    \setM(x_0,0):= DG(x_0)^* \coneN_{K}(G(x_0)).\footnote{Here $DG(x_0)^*$ is an adjoint operator to $DG(x_0)$}
\end{equation*}

% {\color{blue} By the formula $DG(x_0)(x)\circ b_j=\sum_{i\in \mathbb{N}} Dg_i(x_0)(x)b_i \circ b_j$. If $b_i$ are orthonormal, then
% \begin{equation*}
%  DG(x_0)(x)\circ b_j=Dg_j(x_0),\quad j\in \mathbb{N}
% \end{equation*}
% and if $E$ is Hilbert?,
% \begin{equation*}
%   DG(x_0)(x)\circ b_j = x \circ DG^*(x_0)(b_j),\quad j\in \mathbb{N}
% \end{equation*}
% which is equivalent to
% \begin{equation*}
%     x\circ DG^*(x_0)(b_j)=Dg_j(x_0)x,\quad j\in \mathbb{N}
% \end{equation*}
% }

\begin{proposition}
Let $x_0\in \mathcal{F}$. Assume that CRC+ holds for $(g_i)_{i \in I_0\cup I(x_0)}$ at $x_0$ with index set $J_2= I_0\cup I(x_0)$. 
Assume that $\operatorname{span} \{ Dg_i(x_0), i\in J_2 \}$ is closed. 
%Assume that $X_1:=\spanc( Dg_i(x_0), i\in I_0\cup I(x_0))$ is reflexive, $Dg_i(x_0), i\in I_0\cup I(x_0)$ is Besselian and condition \ref{assumtion:RCRCQ6} of RCRCQ holds for $J=I_0\cup I(x_0)$. 
Then $\setM(x_0,0)$ is weakly*-closed.
\end{proposition}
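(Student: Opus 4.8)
The plan is to show that $\setM(x_0,0) = DG(x_0)^*\coneN_K(G(x_0))$ is weakly*-closed by combining the closedness of $\operatorname{span}\{Dg_i(x_0),\,i\in J_2\}$ with a description of $\coneN_K(G(x_0))$ in the basis $(b_i)_{i\in \mathbb{N}}$. First I would compute the normal cone: since $G(x_0)\in K$ and $K$ is the basis cone, a standard computation (dual to Proposition~\ref{proposition:representation_tangent}) gives that $\coneN_K(G(x_0))$ consists of those $y\in F$ whose only active coordinates are the equality indices $I_0$ and the active inequality indices $I(x_0)$, with the inequality coordinates nonnegative; in particular $\coneN_K(G(x_0))\subseteq \spanc(b_i^*,\,i\in J_2)$ when we identify functionals appropriately. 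Applying $DG(x_0)^*$ and using Fact~\ref{fact:representation_bstar}, which identifies $b_i^*\circ DG(x_0)$ with $Dg_i(x_0)$, we get that $\setM(x_0,0)$ is contained in (a convex subcone of) the closed subspace $\overline{\operatorname{span}}\{Dg_i(x_0),\,i\in J_2\}$ of $E^*$.

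Next I would use CRC+ at $x_0$ with $J_2=I_0\cup I(x_0)$: by Definition~\ref{definition:crcplus} the family $(Dg_i(x_0))_{i\in J_2}$ forms a shrinking and boundedly-complete (hence, by James' Theorem~\ref{theorem:James}, reflexive) Besselian basis of $X_1:=\spanc(Dg_i(x_0),\,i\in J_2)$, and by \ref{assumption:Cplus} each $Dg_i(x_0)^*$ lies in $E$. Reflexivity of $X_1$ is the decisive point: on a reflexive Banach space the weak and weak* topologies of the dual coincide, and more importantly a closed convex cone in a reflexive space is weakly closed, hence here weakly*-closed relative to $X_1$; and since $X_1$ itself is weak*-closed in $E^*$ (being, via the biorthogonal system and Besselianity, the range of the adjoint of the isomorphism $t_{x_0}$ of Proposition~\ref{proposition:CRC_and_isomorphism} onto a closed subspace of $Y(J_2)$), weak*-closedness relative to $X_1$ upgrades to weak*-closedness in $E^*$. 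Concretely, I would express $\setM(x_0,0)$ as the image under the adjoint $DG(x_0)^*$ — restricted to the closed complemented range — of the closed convex cone $\coneN_K(G(x_0))\cap Y(J_2)$, and invoke Proposition~\ref{proposition:CRC_and_isomorphism} (the isomorphism $t_{x_0}$) together with \ref{item:CRC:2plus} (closedness of the range in $Y(J_2)$) to see that this image is again a closed convex cone in the reflexive space $X_1$.

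The main obstacle, which I would treat carefully, is the passage from sequential arguments to the full weak* topology: $\coneN_K(G(x_0))$ is described coordinatewise through the functionals $b_i^*$, so it is naturally weak*-sequentially closed, but weak*-closedness of a convex set in a nonseparable dual is strictly stronger. Here this gap is bridged precisely by reflexivity of $X_1$ (from \ref{assumption:Aplus} of CRC+ via James' Theorem) plus the Besselian property \ref{assumption:Bplus}, which via Lemma~\ref{lemma:besseliantohilbertian} makes $(Dg_i(x_0)^*)_{i\in J_2}$ Hilbertian for $X_1^*$ and thus identifies the relevant cone with a closed convex cone in an $\ell_2$-type space, where closed convex equals weakly closed by Mazur's theorem; transporting back through the isomorphism $t_{x_0}$ and the closed split structure $E=E_1\oplus E_2$ of Proposition~\ref{proposition:E1_representation} then yields weak*-closedness of $\setM(x_0,0)$ in $E^*$.
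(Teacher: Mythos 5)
Your proposal starts from the same two observations as the paper — the coordinate description of $\coneN_K(G(x_0))$ (coefficients vanish off $I_0\cup I(x_0)$ and are nonnegative on $I(x_0)$) and the resulting inclusion $\setM(x_0,0)\subset X_1=\spanc(Dg_i(x_0),\ i\in J_2)$ — but then takes an abstract route (Mazur's theorem in the reflexive space $X_1$, plus an ``image of a closed cone under an isomorphism'' step), whereas the paper argues concretely and sequentially: given $z_n=\sum_{i\in J_2}a_i^nDg_i(x_0)$ converging weakly* to $z_0$, it pairs $z_n$ against the biorthogonal vectors $Dg_j^*(x_0)$, which lie in $E$ by condition \ref{assumption:Cplus} of CRC+, deduces the coordinatewise convergence $a_j^n\to\tilde\beta_j$, and observes that the sign constraints defining $\coneN_K(G(x_0))$ pass to the limit coordinate by coordinate. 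The hypothesis that $\operatorname{span}\{Dg_i(x_0),\ i\in J_2\}$ is closed is used there to write $z_0$ in the basis at all; your argument never invokes that hypothesis, which is a warning sign.

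There are two genuine gaps. First, the central closedness claim — that $DG(x_0)^*$ maps the closed convex cone $\coneN_K(G(x_0))$ onto a closed cone in $X_1$ — is not established by what you cite: images of closed convex cones under bounded operators, even operators with closed range, need not be closed, and the isomorphism $t_{x_0}$ of Proposition \ref{proposition:CRC_and_isomorphism} is a map $E_1\to (Dg_i(x))_{i\in J_1}(E)$ between subspaces of $E$ and $Y(J_2)$, so its adjoint does not act on the space where $\coneN_K(G(x_0))$ lives and cannot transport that cone. What actually closes the gap is the Besselian property \ref{assumption:Bplus}: it forces every element of $X_1$ to have square-summable coordinates, so that $\setM(x_0,0)=\{z\in X_1\mid Dg_i(x_0)^*(z)\ge 0,\ i\in I(x_0)\}$, an intersection of half-spaces cut out by the continuous functionals $Dg_i(x_0)^*$ — which is essentially the computation the paper performs sequentially, and which you only gesture at. Second, your upgrade from ``weakly closed in $X_1$'' to ``weak*-closed in $E^*$'' needs the restriction of $\sigma(E^*,E)$ to $X_1$ to coincide with $\sigma(X_1,X_1^*)$, i.e.\ the evaluation map $E\to X_1^*$ to be onto; this does hold here because Proposition \ref{proposition:E1_representation} identifies $E_1$ with $X_1^*$, but your justification (``weak and weak* topologies of the dual coincide'' on a reflexive space) is a statement about $X_1^*$, not about $X_1$ sitting inside $E^*$, and so does not address the point. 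To your credit, you correctly note that a sequential argument (which is all the paper actually gives) is a priori weaker than weak*-closedness, and the Mazur-plus-reflexivity repair is the right instinct — but as written it rests on the two unestablished claims above.
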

\begin{proof}

%Let $E$ be Hilbert and 
%and assume that $DG(x_0)(E)$ is closed.
%Let $J_2\subset I_0\cup I(x_0)$ be such that CRC+ holds at $x_0$.
Let $\langle \cdot , \cdot  \rangle_E:\ E\times E^* \rightarrow \mathbb{R}$ be a duality mapping for the pair $E,E^*$.  
Observe, that for any $x\in E$ and for any $y\in F$
\begin{equation*}
    \langle x , DG^*(x_0)y \rangle_E = \langle DG(x_0)x , y \rangle_F,
\end{equation*}
where $DG^*(x_0):\ F \rightarrow E^*$ denotes the adjoint operator. 
Therefore, for any $x\in E$ and any $l\in \mathbb{N}$,
\begin{equation}\label{eq:suml}
    \langle x , DG^*(x_0)b_l^* \rangle_E = \langle DG(x_0)x , b_l^* \rangle_F
    = \langle \sum_{i\in \mathbb{N}} b_i^*(DG(x_0)x)b_i  , b_l^* \rangle_F  = Dg_l(x_0)x.
\end{equation}
    Let $(z_n)$ be a sequence in $ \setM(x_0,0)\subset E^*$ weakly*-converging to some $z_0\in E^*$. We want to show that $z_0\in \setM(x_0,0)$, i.e., there exists $y_0\in \coneN_K(G(x_0))$ such that $z_0=DG^*(x_0)(y_0)$.
    
    There exist $y_n \in \coneN_K(G(x_0))\subset F$, $y_n=\sum_{i\in \mathbb{N}} b_i^{**}(y_n)b_i^* $, $n\in \mathbb{N}$ such that $z_n=DG^*(x_0)(y_n)$, $n\in \mathbb{N}$. Therefore, for each $n\in \mathbb{N}$
    \begin{equation*}
        \forall k \in K \quad \langle y_n \mid k - G(x_0) \rangle \leq 0
    \end{equation*}
    and in consequence
        \begin{equation*}
        \forall k \in K \quad \langle \sum_{i\in I_0\cup I_1} b_i^{**}(y_n)b_i^*  \mid k - G(x_0) \rangle \leq 0,
    \end{equation*}
i.e., according to \eqref{cone:K}
\begin{align*}
            &\forall k=\sum_{j\in I_0\cup I_1} \alpha_j b_j \in K,\ \alpha_j=0,\ j\in I_0,\  \alpha_j\leq 0, j\in I_1\\
            &\langle \sum_{i\in I_0\cup I_1} b_i^{**}(y_n)b_i^*  \mid \sum_{j\in I_0\cup I_1} (\alpha_j - b_j^*(G(x_0))) b_j \rangle \leq 0,
\end{align*}
and equivalently
\begin{align*}
              &\forall k=\sum_{j\in I_0\cup I_1} \alpha_j b_j \in K,\ \alpha_j=0,\ j\in I_0,\  \alpha_j\leq 0, j\in I_1\\
              &\sum_{i\in I_0\cup I_1} b_i^{**}(y_n) (\alpha_i - b_i^*(G(x_0))) \leq 0.
\end{align*}
Take any fixed $l\in I_0\cup I_1$.
\begin{enumerate}
    \item[Case 1] $b_l^{*}(G(x_0))<0$, i.e. $l\in I_1 \setminus I(x_0)$. By taking $\alpha_i=b_i^*(G(x_0))=0 $, $i\in I_0$, $\alpha_i=b_i^*(G(x_0))\leq 0$, $i\in I_1\setminus \{ l \}$ and $\alpha_l=0$ we deduce
\begin{equation*}
    b_l^{**}(y_n) (-b_l^*(G(x_0))) \leq 0\quad \forall n\in \mathbb{N}.
\end{equation*}
Therefore, $b_l^{**}(y_n)\leq 0$, $n\in \mathbb{N}$. 
On the other hand, by taking $\alpha_i=b_i^*(G(x_0))=0 $, $i\in I_0$, $\alpha_i=b_i^*(G(x_0))\leq 0$, $i\in I_1\setminus \{ l \}$ and $\alpha_l=2\cdot b_l^{*}(G(x_0))\leq 0$ we obtain
\begin{equation*}
     b_l^{**}(y_n) b_l^*(G(x_0)) \leq 0\quad \forall n\in \mathbb{N},
\end{equation*}
i.e., $b_l^{**}(y_n)\geq 0$, $n\in \mathbb{N}$. In conclusion, $b_l^{**}(y_n)= 0$, $n\in \mathbb{N}$.
    \item[Case 2] $b_l^{*}(G(x_0))=0$ and $l\in I(x_0)$. By taking $\alpha_i=b_i^*(G(x_0))=0 $, $i\in I_0$, $\alpha_i=b_i^*(G(x_0))\leq 0$, $i\in I_1\setminus \{ l \}$ and $\alpha_l=-1$ we obtain
\begin{equation*}
    b_l^{**}(y_n) \cdot (-1) \leq 0\quad \forall n\in \mathbb{N},
\end{equation*}
i.e. $b_l^{**}(y_n)\geq 0$.
\item[Case 3] $b_l^{*}(G(x_0))=0$ and $l\in I_0$. Then $b_l^{**}(y_n)\in \mathbb{R}$, $n\in \mathbb{N}$.
\end{enumerate}
Therefore,
\begin{align*}
    \coneN_K(G(x_0))= \{ y \in F \mid & y=\sum_{i\in I_0\cup I_1} b_i^{**}(y)b_i^*,\ b_i^{**}(y)=0, i\in I_1\setminus I(x_0),\\
    &b_i^{**}(y)\geq 0,\ i\in I(x_0),\ b_i^{**}(y)\in \mathbb{R},\ i\in I_0 \}.
\end{align*}

%for any $i,\ n \in \mathbb{N}$, $a_i^n=b_i^{**}(y_n)\geq 0$. 
In conclusion $y_n=\sum_{i\in I_0\cup I(x_0)}b_i^{**}(y_n)b_i^*$, where $b_i^{**}(y_n)=0$, $i\in I_1\setminus I(x_0)$, $b_i^{**}(y_n)\geq 0$, $i\in  I(x_0)$, $b_i^{**}(y_n)\in \mathbb{R}$, $i\in I_0$ for any $n\in \mathbb{N}$.  Moreover, $z_n=DG^*(x_0)(y_n)=DG^*(x_0)(\sum_{i\in I_0\cup I(x_0)}b_i^{**}(y_n)b_i^*)$, $n\in \mathbb{N}$.

%If $b_l^{*}(G(x_0))=0$ for some $l\in \mathbb{N}$ 
% If for some $l\in \mathbb{N}$,  $b_l^*(G(x_0))=0$, by taking $\alpha_i=b_i^*(G(x_0))=0$, $i\in \mathbb{N}\setminus \{l \}$ and $\alpha_l=-1$
% we deduce
% \begin{equation*}
%     b_l^{**}(y_n) (-1) \geq 0.
% \end{equation*}
% Therefore, $b_l^{**}(y_n)\leq 0$ for any $l\in \mathbb{N}$.

Let $a_i^n:=b_i^{**}(y_n)$, $i\in I_0\cup I_1$, $n\in \mathbb{N}$. 
Since $(z_n)$ is a sequence in $ \setM(x_0,0)$ weakly*-converging to some $z_0\in E^*$,
% \begin{equation*}
%     \lim_{n\rightarrow +\infty} \langle z_n , x \rangle =\lim_{n\rightarrow +\infty} \langle \sum_{i\in \mathbb{N}}a_i^n Dg_i(x_0) , x \rangle = \langle z_0 , x \rangle,
% \end{equation*}
by \eqref{eq:suml}, for any $n\in \mathbb{N}$
\begin{align}\label{xz_rep1}
\begin{aligned}
\forall x \in E \quad 
    &%\lim_{n\rightarrow +\infty} 
    \langle x , z_n \rangle_E=  \langle x , DG^*(x_0)(\sum_{i\in I_0\cup I(x_0)} a_i^n b_i^*) \rangle_E \\
    &=%\lim_{n\rightarrow +\infty}
    \sum_{i\in I_0\cup I(x_0)} a_i^n\langle x , DG^*(x_0)( b_i^*) \rangle_E\\
    &=%\lim_{n\rightarrow +\infty} 
    \sum_{i\in I_0\cup I(x_0)} a_i^n\langle DG(x_0)x , b_i^* \rangle_F\\
    &=%\lim_{n\rightarrow +\infty} 
    \sum_{i\in I_0\cup I(x_0)} a_i^n\langle \sum_{j\in I_0\cup I(x_0)} b_j^*(DG(x_0)(x))b_j , b_i^* \rangle_F\\
    &=%\lim_{n\rightarrow +\infty}
    \sum_{i\in I_0\cup I(x_0)} a_i^n\langle \sum_{j\in I_0\cup I(x_0)}b_j^*(DG(x_0)(x))b_j , b_i^* \rangle_F\\
    &=%\lim_{n\rightarrow +\infty} 
    \sum_{i\in I_0 \cup I(x_0)} a_i^n Dg_i(x_0) ( x)\\
    &= %\lim_{n\rightarrow +\infty} 
    \langle x , \sum_{i\in I_0 \cup I(x_0)} a_i^n Dg_i(x_0) \rangle_E .
    %\\    &=\langle x , z_0 \rangle_E.
    \end{aligned}
\end{align}
By \eqref{xz_rep1}, $z_n=\sum_{i\in I_0 \cup I(x_0)} a_i^n Dg_i(x_0)\in X_1=\spanc( Dg_i(x_0), i\in I_0\cup I(x_0))$
and 
\begin{equation}\label{xz_rep2}
   \forall x\in E\quad  \langle x , z_0 \rangle_E=\lim_{n\rightarrow +\infty} 
    \langle x , z_n \rangle_E = \lim_{n\rightarrow +\infty} \langle x , \sum_{i\in I_0 \cup I(x_0)} a_i^n Dg_i(x_0) \rangle_E.
\end{equation}
% czy wspolczynniki...?

% Since $X_1=\spanc( Dg_i(x_0), i\in I_0\cup I(x_0))=\spanc ( Dg_i(x_0), i\in J_2)$ is reflexive and $Dg_i(x_0), i\in J_2$ is Besselian for $X_1$, $\sum_{i\in J_2} (a_i^n)^2<\infty$, we have $z_n\in X_1$, $n\in \mathbb{N}$. 

Since $z_n$ converges weakly* in reflexive $X_1$, it converges to $z_0\in X_1$. Since $\operatorname{span} ( Dg_i(x_0),\ i\in J_2 ) =\spanc ( Dg_i(x_0),\ i\in J_2 )=X_1$,  $z_0=\sum_{i\in J_2} \tilde{\beta}_i Dg_i(x_0)$ for some $\tilde{\beta}_i\in \mathbb{R}$, $i\in J_2$.%=DG^*(x_0)(\sum_{i\in I_0\cup I(x_0)} \beta_ib_i)$. %Let $y_0=\sum_{i\in \mathbb{N}} \beta_ib_i$

By assumption \ref{item:CRC:1plus} of CRC+ with $J_1=I_0\cup I(x_0)$, $(Dg_k(x_0))_{k\in J_2}$ is a basis of $X_1=\spanc ( Dg_i(x_0),\ i\in I_0\cup I(x_0) )=\spanc ( Dg_i(x_0),\ i\in J_2 )$, hence, by Proposition \ref{fact:existence_dial_basis}, there exists $Dg_k^*(x_0)\in X_1^*$, $k\in J_2$ such that
\begin{equation*}
    Dg_k^*(x_0)(Dg_i(x_0))=\delta_{ik}\quad i,k \in J_2.
\end{equation*}
By \ref{assumption:Aplus} of CRC+, $(Dg_k^*(x_0))_{k\in J_2}$ is a basis for $\spanc ( Dg_i^*(x_0),\ i\in J_2 )$.

Let $\tilde{x}=Dg_j^*(x_0)\in X_1^*\subset  E^{**}$,  $j\in J_2$. By assumption \ref{assumption:Cplus} of CRC+ with $J_1=I_0\cup I(x_0)$, $\tilde{x}\in E$. By \eqref{xz_rep1} and \eqref{xz_rep2} with $x=\tilde{x}$ we have
\begin{equation*}
    \langle Dg_j^*(x_0),  \sum_{i\in I_0\cup I(x_0)} a_i^n Dg_i(x_0)  \rangle_E \rightarrow \langle Dg_j^*(x_0), \sum_{i\in J_2} \tilde{\beta}_i Dg_i(x_0)   \rangle_E,
\end{equation*}
% which, 
% by unique equivalent representation in $X_1$, $\sum_{i\in I_0\cup I(x_0)} a_i^n Dg_i(x_0)=\sum_{i\in J_2} \tilde{a}_i^n Dg_i(x_0)$ for some $\tilde{a}_i^n\in \mathbb{R  }$, $i\in J_2$, $n\in \mathbb{N}$,  
is equivalent to
% \begin{equation*}
%     \langle Dg_j^*(x_0),  \sum_{i\in J_2} \tilde{a}_i^n Dg_i(x_0)  \rangle_E \rightarrow \langle Dg_j^*(x_0), \sum_{i\in J_2} \tilde{\beta}_i Dg_i(x_0) ,  \rangle_E
% \end{equation*}
% and in consequence
\begin{equation*}
    a_j^n\rightarrow \tilde{\beta}_j\quad j\in J_2 \quad \text{as}\ n\rightarrow +\infty.
\end{equation*}

Since 
$a_j^n=0$, $i\in I_1\setminus I(x_0)$, 
    $a_j^n\geq 0$, $i\in I(x_0)$, $a_j^n\in \mathbb{R}$,  $i\in I_0$ for any $n\in \mathbb{N}$ we have  
$\tilde{\beta}_i=0$, $i\in I_1\setminus I(x_0)$, 
    $\tilde{\beta}_i\geq 0$, $i\in I(x_0)$, $\tilde{\beta}_i\in \mathbb{R}$,  $i\in I_0$ for any $n\in \mathbb{N}$. In conclusion $z_0\in \setM(x_0,0)$.  

% \mbox{}\\[2cm]
% Let $y_n$ be a sequence in $\coneN_K(0)\subset F$ converging to some $y_0$. Let $y_n=\sum_{i\in \mathbb{N}} a_i^n b_i$, $y_0=\sum_{i\in \mathbb{N}} a_i^0 b_i$. Then
% \begin{align*}
%         &\langle x , DG^*(x_0)(y_n) \rangle = \langle x , \sum_{i\in \mathbb{N}} a_i^n DG^*(x_0)(b_i) \rangle = \sum_{i\in \mathbb{N}} a_i^n Dg_i(x_0) x\quad \forall x \in E^*,\ \forall n\in \mathbb{N}\\
%     & \langle x , DG^*(x_0)(y_0) \rangle = \langle x , \sum_{i\in \mathbb{N}} a_i^0 DG^*(x_0)(b_i) \rangle = \sum_{i\in \mathbb{N}} a_i^0 Dg_i(x_0) x\quad \forall x \in E^*
% \end{align*}
% and in consequence, 
% \begin{equation*}
%   \lim_{n\rightarrow +\infty} \langle x, DG^*(x_0)(y_n)\rangle = \sum_{i\in \mathbb{N}} a_i^0 Dg_i(x_0)x\quad \forall x \in E^*,
% \end{equation*}
% which show that $DG^*(x_0)\coneN_{K}(G(x_0))$ is weak* closed. Pytanie: czy należy do stożka (obrazu)?
\end{proof}

By Proposition 5.6 of \cite{MR4159570} we immediately get the following result.
\begin{proposition}
Let $E$ be a %reflexive 
	Banach space, $F$ be a Hilbert space and assume that $(b_i)_{i\in \mathbb{N}}$ is a Besselian and Hilbertian basis of $F$. Let $\mathcal{ F}\subset E$ be given as in \eqref{set:constraints}.  
	%Assume that 
	%$E=E_1 \oplus E_2,$ where $E_2=\text{ker}\, DG(x_0)$,
%$F_1:=DG(x_0)(E)$ is a closed set.
Let $x_0\in \mathcal{ F}$ be a local minimizer of \eqref{problem:P}.
Assume RCRCQ+ holds for $\mathcal{ F}$ at $x_0\in \mathcal{ F}$ with a neigbourhood $V(x_0)$. 
%and $(Dg_i(x_0)_{i \in I_0 \cup I(x_0)})(E)$ is closed. 
Assume that assumption \ref{assumption:lemma} is satisfied at $x_0$. Assume that $\setM(x_0,0)$ is weakly* closed. Then the set of Lagrange multipliers at $x_0$ is nonempty.
\end{proposition}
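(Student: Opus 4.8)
The plan is to obtain the claim directly from Proposition 5.6 of \cite{MR4159570}, verifying that all of its hypotheses hold in the present setting. The decisive input is the Abadie condition: since $F$ is a Hilbert space with a Besselian and Hilbertian basis $(b_i)_{i\in\mathbb{N}}$, since RCRCQ+ holds for $\mathcal{ F}$ at $x_0$, and since condition \ref{assumption:lemma} is satisfied at $x_0$, Theorem \ref{theorem:tangent_cone} applies and yields $\Gamma_{\mathcal{ F}}(x_0)=\mathcal{ T}_{\mathcal{ F}}(x_0)$. This is exactly the geometric regularity that Proposition 5.6 of \cite{MR4159570} asks for.

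Next I would combine this with the hypothesis that $x_0$ is a local minimizer of \eqref{problem:P}. The elementary first-order necessary condition on the Bouligand tangent cone gives $-Df_0(x_0)\in(\mathcal{ T}_{\mathcal{ F}}(x_0))^{\circ}$, hence by the Abadie condition $-Df_0(x_0)\in(\Gamma_{\mathcal{ F}}(x_0))^{\circ}$. The content of Proposition 5.6 of \cite{MR4159570} is that, under the Abadie condition together with weak${}^*$-closedness of $\setM(x_0,0)=DG(x_0)^{*}\coneN_{K}(G(x_0))$, the polar of the linearized cone coincides with $-\setM(x_0,0)$; consequently $-Df_0(x_0)\in\setM(x_0,0)$, i.e.\ there is $\bar\lambda\in\coneN_{K}(G(x_0))$ with $Df_0(x_0)+DG(x_0)^{*}\bar\lambda=0$. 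Since $\setM(x_0,0)$ is assumed weak${}^*$-closed, this applies, and the displayed identity is precisely the assertion that the Lagrange multiplier set of \eqref{problem:P} at $x_0$ is nonempty.

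The routine part is the bookkeeping of hypotheses: the Abadie condition is supplied by Theorem \ref{theorem:tangent_cone}, weak${}^*$-closedness of $\setM(x_0,0)$ is assumed, and the local-minimizer assumption is in the statement. The main obstacle I anticipate is purely one of matching conventions: one must check that the tangent/normal cone formalism of \cite{MR4159570} agrees with ours — in particular that $\mathcal{ T}_{K}(G(x_0))$ has the explicit form of Proposition \ref{proposition:representation_tangent}, that $\coneN_{K}(G(x_0))$ is its polar, and that the linearized cone $\Gamma_{\mathcal{ F}}(x_0)$ and the set $\setM(x_0,0)$ are defined as in \cite{MR4159570} — so that Proposition 5.6 of \cite{MR4159570} can be invoked verbatim. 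Once this identification is in place, the conclusion is immediate.
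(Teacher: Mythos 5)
Your proposal is correct and follows exactly the paper's own argument: obtain the Abadie condition from Theorem \ref{theorem:tangent_cone} using RCRCQ+ and condition \ref{assumption:lemma}, then invoke Proposition 5.6 of \cite{MR4159570} together with the assumed weak*-closedness of $\setM(x_0,0)$ and the first-order necessary condition at the local minimizer. The extra detail you supply about polars of the tangent and linearized cones is just an unpacking of that cited proposition.
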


\begin{proof}
    By Theorem \ref{theorem:tangent_cone}, Abadie condition holds for $\mathcal{ F}$ at $x_0$. The rest of the proof follows the lines of the proof of Proposition 5.6 of \cite{MR4159570}.
\end{proof}

\begin{remark}
Let us underline the fact that in both papers \cite{constant_rank_constraint_Andreani,MR4159570} in the definition of Abadie condition the set $\setM(x_0,0)$ is weakly* closed. 
\end{remark}

Let us consider now the case when no equality are present (i.e. $I_1=\emptyset$), i.e.,
\begin{equation*}       
\mathcal{ F}=\left\{\begin{array}{ll} x\in E \mid 
             g_i(x) = 0,&  i\in I_0
        \end{array}\right\}
\end{equation*}
Such problems has been considered in e.g. in 
%(Blot), see 
\cite[Theorem 4.1]{MR4104521}. 
%Following \cite{MR4104521} 
In this case we are also getting the existence of Lagrange multipliers (see Proposition \ref{proposition:lagrange_equalities}).  
%under weaker condition than (C) of \cite[Theorem 4.1]{MR4104521} (see  Example \ref{example:dependence}).
By  Proposition \ref{proposition:E1_representation} we are getting split of $E$, which is included in assumption (B) of \cite[Theorem 4.1]{MR4104521} and by Proposition \ref{proposition:CRC_and_isomorphism} we are obtaining isomorphism of $((Dg_i(x))_{i\in I_0})|_{E_1}:\ E_1\rightarrow ((Dg_i(x))_{i\in I_0})(E)$, $x\in U(x_0)$, which is included in assumption (C) of \cite[Theorem 4.1]{MR4104521} (see Remark \ref{remark:assumptionC}).

\begin{proposition}\label{proposition:lagrange_equalities}
Let $E$ be a %reflexive 
	Banach space, $F$ be a Hilbert space and assume that $(b_i)_{i\in \mathbb{N}}$ is a Besselian and Hilbertian basis of $F$. Let $\mathcal{ F}\subset E$ be given as in \eqref{set:constraints}, where $I_1=\emptyset$.  
	%Assume that 
	%$E=E_1 \oplus E_2,$ where $E_2=\text{ker}\, DG(x_0)$,
%$F_1:=DG(x_0)(E)$ is a closed set.
Let $x_0\in \mathcal{ F}$ be a local minimizer of \eqref{problem:P}.
Assume CRC+ holds for $(g_i)_{i\in I_0}$ at $x_0\in \mathcal{ F}$ with a neigbourhood $V(x_0)$. 
%and $(Dg_i(x_0)_{i \in I_0 \cup I(x_0)})(E)$ is closed. 
 %Assume that $\setM(x_0,0)$ is weakly* closed.
Then the set of Lagrange multipliers at $x_0$ is nonempty.
\end{proposition}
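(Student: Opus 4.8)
The plan is to deduce the claim from Theorem \ref{theorem:blot}. Since $I_1=\emptyset$, the cone \eqref{cone:K} collapses to $K=\{0\}$, so that $G(x_0)=0$ and $\coneN_K(G(x_0))=\coneN_{\{0\}}(0)=F$, and problem \eqref{problem:P} is precisely \eqref{problem:P} with $K=\{0\}$, of which $x_0$ is a local solution by hypothesis. Thus it suffices to verify assumptions \ref{assumption_blot1}, \ref{assumption_blot2} and \ref{assumption_blot3} of Theorem \ref{theorem:blot} for $G=(g_i)_{i\in I_0}$. Assumption \ref{assumption_blot1} is immediate since $f_0$ and $G$ are of class $C^1$; the conclusion $Df_0(x_0)=\lambda\circ DG(x_0)$ of that theorem will then produce a multiplier after a sign change and the Riesz identification $F^*\cong F$.

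For assumption \ref{assumption_blot2} I would first invoke CRC+ for $(g_i)_{i\in I_0}$ (which, since $I(x_0)=\emptyset$, is exactly the content of RCRCQ+ for $\mathcal{F}$) to obtain the index set $J_2\subset I_0$ and the neighbourhood $V(x_0)$. By item \ref{item:CRC:1plus} of CRC+ one has $\spanc(Dg_i(x_0),\,i\in J_2)=\spanc(Dg_i(x_0),\,i\in I_0)=:X_1$, so items \ref{assumption:Aplus}, \ref{assumption:Bplus} and \ref{assumption:Cplus} of CRC+ are exactly hypotheses \ref{assumption:A}, \ref{assumption:B} and \ref{assumption:C} of Proposition \ref{proposition:E1_representation} applied to $(g_i)_{i\in J_2}\colon E\to Y(J_2)$. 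That proposition yields $E=E_1\oplus E_2$ with $E_1=\spanc(Dg_i(x_0)^*,\,i\in J_2)$ closed in $E$ and $E_2=\ker ((Dg_i(x_0))_{i\in J_2})=\ker DG(x_0)$, the two kernels coinciding because every $Dg_l(x_0)$ with $l\in I_0\setminus J_2$ lies in the closed span of $(Dg_i(x_0))_{i\in J_2}$ (see Remark \ref{remark:CRC_kernels}); it also gives that $X_1$ is reflexive. Next, Proposition \ref{proposition:CRC_and_isomorphism} with $J_1=I_0$, evaluated at $x=x_0$, shows that $((Dg_i(x_0))_{i\in I_0})|_{E_1}\colon E_1\to DG(x_0)(E)$ is a topological isomorphism (upon identifying $Y(I_0)$ with $\spanc(b_i,\,i\in I_0)\subset F$ via Proposition \ref{proposition:Y_Banach}); since $E_1$ is a closed subspace of the Banach space $E$, hence complete, its image $F_1:=DG(x_0)(E)$ is complete and therefore closed in $F$. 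As $F$ is a Hilbert space, Proposition \ref{proposition_splits} gives $F=F_1\oplus F_1^{\perp}$, so assumption \ref{assumption_blot2} holds.

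Assumption \ref{assumption_blot3} follows from Remark \ref{remark:assumptionC}, according to which it is equivalent to $DG(x)|_{E_1}\colon E_1\to DG(x)(E)$ being an isomorphism for all $x$ in some neighbourhood of $x_0$, and this is precisely the conclusion of Proposition \ref{proposition:CRC_and_isomorphism} on $V(x_0)$. Theorem \ref{theorem:blot} then produces $\lambda\in F^*$ with $Df_0(x_0)=\lambda\circ DG(x_0)=DG(x_0)^*\lambda$; identifying $F^*$ with $F$, the element $\bar{\lambda}:=-\lambda\in F=\coneN_K(G(x_0))$ satisfies $Df_0(x_0)+DG(x_0)^*\bar{\lambda}=0$, i.e. $D_xL(x_0,\bar{\lambda})=0$, so $x_0$ is a KKT point and $\bar{\lambda}$ is a Lagrange multiplier; hence the multiplier set at $x_0$ is nonempty. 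The only point demanding care, though it carries no analytic difficulty, is the bookkeeping that matches items \ref{assumption:Aplus}, \ref{assumption:Bplus}, \ref{assumption:Cplus} of CRC+ for $(g_i)_{i\in I_0}$ with the hypotheses of Propositions \ref{proposition:E1_representation} and \ref{proposition:CRC_and_isomorphism} (equality of the span closures over $J_2$ and $I_0$, and coincidence of the two kernels, together with the identification of $Y(I_0)$ inside $F$ needed for the closedness of $DG(x_0)(E)$); the remainder is a direct assembly of those two propositions with Proposition \ref{proposition_splits} and Theorem \ref{theorem:blot}.
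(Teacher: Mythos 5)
Your proof is correct, but it follows a genuinely different route from the one the paper actually writes down. The paper's proof does not invoke Theorem \ref{theorem:blot} at all: it starts from the first-order necessary condition $Df_0(x_0)h=0$ for all $h\in\mathcal{T}_{\mathcal F}(x_0)$, uses the machinery of Theorem \ref{theorem:tangent_cone} (hence Ljusternik's Theorem \ref{theorem:Ljusternik} and \eqref{set:Ma} with $J=I_0$) to identify $\mathcal{T}_{\mathcal F}(x_0)=\ker DG(x_0)$, and then uses the splitting $E=E_1\oplus\ker DG(x_0)$ from Proposition \ref{proposition:E1_representation} to conclude that $Df_0(x_0)$ annihilates $\ker DG(x_0)$ and is therefore of the form $\lambda\circ DG(x_0)$. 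You instead verify hypotheses \ref{assumption_blot1}--\ref{assumption_blot3} of Theorem \ref{theorem:blot} directly from Propositions \ref{proposition:E1_representation} and \ref{proposition:CRC_and_isomorphism} together with Proposition \ref{proposition_splits} and Remark \ref{remark:assumptionC} -- which is, amusingly, exactly the strategy the paper announces in the paragraph preceding the proposition but then does not execute. Your bookkeeping is sound: the reduction of the CRC+ items to the hypotheses of Proposition \ref{proposition:E1_representation} over $J_2$, the coincidence of $\ker((Dg_i(x_0))_{i\in J_2})$ with $\ker DG(x_0)$, and the sign change in passing from $Df_0(x_0)=\lambda\circ DG(x_0)$ to $D_xL(x_0,\bar\lambda)=0$ with $\coneN_{\{0\}}(0)=F$ are all handled. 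The one step where you lean on an implicit reading is the closedness of $F_1=DG(x_0)(E)$: you derive it from completeness of $E_1$ transported through $t_{x_0}$, which requires Proposition \ref{proposition:CRC_and_isomorphism} to deliver a \emph{topological} isomorphism (its proof only exhibits a continuous bijection); alternatively one can appeal to item \ref{item:CRC:2plus} of CRC+ with the identification of Proposition \ref{proposition:Y_Banach}, which is how the paper itself secures closed images when applying the Rank Theorem. In exchange for this extra verification, your route avoids Ljusternik and the tangent-cone computation entirely, while the paper's route yields the additional geometric information $\mathcal{T}_{\mathcal F}(x_0)=\ker DG(x_0)$ as a by-product.
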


\begin{proof}
    Let $x_0$ be a local minimizer of problem \eqref{problem:P0} with $I_1=\emptyset$.  The first-order necessary optimality condition is $Df_0(x_0)h=0$ for all $h\in \mathcal{ T}_\mathcal{ F}(x_0)$. 
   By the proof of Theorem \ref{theorem:tangent_cone} (see \eqref{set:Ma} with $J=I_0$),
    \begin{equation*}
                \ker Dg(x_0) = \ker (Dg_i(x_0))_{i\in I_0} = \mathcal{ T}_\mathcal{ F}(x_0).
    \end{equation*}
    Therefore $Df_0(x)h=0$ for any $h\in \ker DG(x_0)$, i.e., $Df_0(x)\in (\ker DG(x_0))^\perp$. By Proposition \ref{proposition:E1_representation},    %Since $Dg(x_0)(E)$ is  closed, by \cite[Theorem 2.19]{MR2759829}, 
    $(\ker DG(x_0))^\perp=\spanc ( Dg_i(x_0)^*, i \in I_0 )$. Therefore $Df_0(x_0)\in \spanc ( Dg_i(x_0)^*, i \in I_0 )$, i.e., there exists $\lambda \in F$ such that $Df_0(x_0)=\langle DG(x_0)^* \mid \lambda \rangle = \lambda (DG(x_0))$. 
\end{proof}

\bibliographystyle{plain}
\bibliography{myarticlebibfile}   % name your BibTeX data base

\begin{thebibliography}{10}

\bibitem{manifolds_tensor_vol2}
R.~Abraham, J.~E. Marsden, and T.~Ratiu.
\newblock {\em Manifolds, tensor analysis, and applications}, volume~75 of {\em
  Applied Mathematical Sciences}.
\newblock Springer-Verlag, New York, second edition, 1988.

\bibitem{MR2192298}
Fernando Albiac and Nigel~J. Kalton.
\newblock {\em Topics in {B}anach space theory}, volume 233 of {\em Graduate
  Texts in Mathematics}.
\newblock Springer, New York, 2006.

\bibitem{MR2679662}
R.~Andreani, C.~E. Echag\"{u}e, and M.~L. Schuverdt.
\newblock Constant-rank condition and second-order constraint qualification.
\newblock {\em J. Optim. Theory Appl.}, 146(2):255--266, 2010.

\bibitem{MR3579742}
Roberto Andreani, Roger Behling, Gabriel Haeser, and Paulo J.~S. Silva.
\newblock On second-order optimality conditions in nonlinear optimization.
\newblock {\em Optim. Methods Softw.}, 32(1):22--38, 2017.

\bibitem{constant_rank_constraint_Andreani}
Roberto Andreani and Paulo Silva.
\newblock Constant rank constraint qualifications: A geometric introduction.
\newblock {\em Pesquisa Operacional}, 34:481--494, 09 2014.

\bibitem{MR4223867}
E.~M. Bednarczuk, K.~W. Le\'{s}niewski, and K.~E. Rutkowski.
\newblock On tangent cone to systems of inequalities and equations in {B}anach
  spaces under relaxed constant rank condition.
\newblock {\em ESAIM Control Optim. Calc. Var.}, 27:Paper No. 9, 22, 2021.

\bibitem{MR857736}
Jo\"{e}l Blot.
\newblock The rank theorem in infinite dimension.
\newblock {\em Nonlinear Anal.}, 10(10):1009--1020, 1986.

\bibitem{MR4104521}
Jo\"{e}l Blot.
\newblock Rank theorem in infinite dimension and {L}agrange multipliers.
\newblock {\em Appl. Math. Optim.}, 81(3):815--822, 2020.

\bibitem{MR771117}
B.~Booss and D.~D. Bleecker.
\newblock {\em Topology and analysis}.
\newblock Universitext. Springer-Verlag, New York, 1985.
\newblock The Atiyah-Singer index formula and gauge-theoretic physics,
  Translated from the German by Bleecker and A. Mader.

\bibitem{MR4159570}
Eike B\"{o}rgens, Christian Kanzow, Patrick Mehlitz, and Gerd Wachsmuth.
\newblock New constraint qualifications for optimization problems in {B}anach
  spaces based on asymptotic {KKT} conditions.
\newblock {\em SIAM J. Optim.}, 30(4):2956--2982, 2020.

\bibitem{MR601755}
A.~V. Dmitruk, A.~A. Milyutin, and N.~P. Osmolovski\u{\i}.
\newblock Ljusternik's theorem and the theory of the extremum.
\newblock {\em Uspekhi Mat. Nauk}, 35(6(216)):11--46, 215, 1980.

\bibitem{MR1831176}
Mari\'{a}n Fabian, Petr Habala, Petr H\'{a}jek, Vicente
  Montesinos~Santaluc\'{\i}a, Jan Pelant, and V\'{a}clav Zizler.
\newblock {\em Functional analysis and infinite-dimensional geometry}, volume~8
  of {\em CMS Books in Mathematics/Ouvrages de Math\'{e}matiques de la SMC}.
\newblock Springer-Verlag, New York, 2001.

\bibitem{Fullerton_geometric_properties}
R.~E. Fullerton.
\newblock Geometric properties of a basis in a {B}anach space.
\newblock In {\em Proceedings of the International Congress of Mathematicians},
  volume~II, Amsterdam, 1954. North Holland Publ.

\bibitem{MR147888}
R.~E. Fullerton.
\newblock Geometric structure of absolute basis systems in a linear topological
  space.
\newblock {\em Pacific J. Math.}, 12:137--147, 1962.

\bibitem{MR303262}
J.~R. Holub.
\newblock Hilbertian, {B}esselian, and semi-shrinking bases.
\newblock {\em Studia Math.}, 37:203--211, 1970/71.

\bibitem{MR3204130}
Leonid Hurwicz.
\newblock Programming in linear spaces [reprint of {\it {s}tudies in linear and
  nonlinear programming}, 38--102, {S}tanford {U}niv. {P}ress, {S}tanford,
  {CA}, 1958].
\newblock In {\em Traces and emergence of nonlinear programming}, pages
  131--195. Birkh\"{a}user/Springer Basel AG, Basel, 2014.

\bibitem{theory_of_external_problems_Ioffe}
A.~D. Ioffe and V.~M. Tihomirov.
\newblock {\em Theory of extremal problems}, volume~6 of {\em Studies in
  Mathematics and its Applications}.
\newblock North-Holland Publishing Co., Amsterdam-New York, 1979.
\newblock Translated from the Russian by Karol Makowski.

\bibitem{directional_derivative_Janin}
Robert Janin.
\newblock Directional derivative of the marginal function in nonlinear
  programming.
\newblock {\em Math. Programming Stud.}, pages 110--126, 1984.
\newblock Sensitivity, stability and parametric analysis.

\bibitem{Kurcyusz1976}
S~Kurcyusz.
\newblock On the existence and nonexistence of lagrange multipliers in banach
  spaces.
\newblock {\em Journal of Optimization Theory and Applications}, 20(1):81--110,
  1976.

\bibitem{MR313766}
C.~W. McArthur.
\newblock Developments in {S}chauder basis theory.
\newblock {\em Bull. Amer. Math. Soc.}, 78:877--908, 1972.

\bibitem{MR3070104}
B.~Mordukhovich and T.~T.~A. Nghia.
\newblock Constraint qualifications and optimality conditions for nonconvex
  semi-infinite and infinite programs.
\newblock {\em Math. Program.}, 139(1-2, Ser. B):271--300, 2013.

\bibitem{MR0298399}
Ivan Singer.
\newblock {\em Bases in {B}anach spaces. {I}}.
\newblock Die Grundlehren der mathematischen Wissenschaften, Band 154.
  Springer-Verlag, New York-Berlin, 1970.

\bibitem{MR331025}
P.~Wojtaszczyk.
\newblock Existence of some special bases in {B}anach spaces.
\newblock {\em Studia Math.}, 47:83--93, 1973.

\end{thebibliography}

\end{document}